\theoremstyle{plain}
\newtheorem{thm}{Theorem}
\newtheorem{proposition}{Proposition}
\newtheorem{lem}{Lemma}
\newtheorem{claim}{Claim}
\newtheorem{cor}{Corollary}
\theoremstyle{definition}
\newtheorem{definition}{Definition}
\theoremstyle{remark}
\newtheorem{rem}{Remark}
\newcommand*{\rom}[1]{\expandafter\@slowromancap\romannumeral #1@}
\DeclareMathOperator*{\esssup}{ess\,sup}
\DeclareMathOperator*{\essinf}{ess\,inf} 
\title{Upper Semicontinuity of Index Plus Nullity for Minimal and CMC Hypersurfaces}
\author{Myles Workman \\
University College London}
\date{}
\begin{document}

\maketitle

\begin{abstract}
We consider a sequence, $\{M_{k}\}_{k \in \mathbb{N}}$, of bubble converging minimal hypersurfaces, or $H$-CMC hypersurfaces, in compact Riemannian manifolds without boundary, of dimension $4 \leq n + 1 \leq 7$. 
We prove the following upper semicontinuity of index plus nullity: 
\begin{eqnarray*}
    \limsup_{k \rightarrow \infty} ( \text{ind} \, (M_{k}) + \text{nul} \, (M_{k}) ) &\leq& \sum_{i = 1}^{l} co(m)_{i} (\text{anl-ind} \, (co (M^{i}_{\infty})) + \text{anl-nul} \, (co (M^{i}_{\infty}))) \\
    && + \sum_{j = 1}^{J} \text{ind} \, (\Sigma^{j}) + \text{nul}_{\omega_{\Sigma^{j}, R}} \, (\Sigma^{j}),
\end{eqnarray*}
for such a bubble converging sequence $M_{k} \rightarrow (\cup_{i = 1}^{l} M_{\infty}^{i}, \Sigma^{1}, \ldots, \Sigma^{J})$, where $co(m)_{i} \in \mathbb{Z}_{\geq 1}$ is a notion of multiplicity of the convergence to the connected component $M_{\infty}^{i}$, and $\Sigma^{1}, \ldots, \Sigma^{L}$ are the bubbles.
This complements the previously known lower semicontinuity of index obtained by Buzano--Sharp \cite{BS-QQ-estimates-minimal-hypersurfaces-bounded-index-and-area}, and Bourni--Sharp--Tinaglia \cite{BST-compactness-bubbling-CMC}.

The strategy of our proof is to analyse a weighted eigenvalue problem along our sequence of degenerating hypersurfaces, $\{M_{k}\}_{k \in \mathbb{N}}$. This strategy is inspired by the recent work of Da Lio--Gianocca--Rivi\`{e}re \cite{DGR-morse-index-stability}.
A key aspect of our proof is making use of a Lorentz--Sobolev inequality to study the behaviour of these weighted eigenfunctions on the neck regions along the sequence, as well as the index and nullity of our non-compact bubbles $\Sigma^{1}, \ldots, \Sigma^{J}$.
\end{abstract}

\bigskip 

Let $(N, g)$ be a compact Riemannian manifold, with no boundary, of dimension $n + 1$, and $H > 0$, be a fixed constant. 
In this paper we investigate two classes, $\mathfrak{M} (N, g)$, and $\mathfrak{C}_{H} (N, g)$.
Here, $\mathfrak{M} (N, g)$ is the class of smooth, closed, properly embedded, minimal hypersurfaces of $N$, with respect to the metric $g$, and $\mathfrak{C}_{H} (N, g)$ is the class of smooth, closed, properly embedded hypersurfaces in $N$, of constant mean curvature $H > 0$, with respect to the metric $g$. 

\bigskip 

As these hypersurfaces arise as critical points to appropriately chosen area-type functionals, a natural property to study is their Morse index (with respect to the associated functional).
For two fixed numbers $\Lambda > 0$, and $I \in \mathbb{Z}_{\geq 0}$, we define the subclasses, 
\begin{equation*}
    \begin{split}
        \mathfrak{M} (N, g, \Lambda, I) = \{ M \in \mathfrak{M} (N, g) \colon \mathcal{H}^{n}_{g} (M) \leq \Lambda, \, \text{ind} \, (M) \leq I \}, \\
        \mathfrak{C}_{H} (N, g, \Lambda, I) = \{ M \in \mathfrak{C}_{H} (N, g) \colon \mathcal{H}^{n}_{g} (M) \leq \Lambda, \, \text{ind} \, (M) \leq I \}. \\
    \end{split}
\end{equation*}
Making use of the curvature estimates for stable minimal hypersurfaces by Schoen--Simon--Yau \cite{SSY-CurvatureEstimatesMinimalHypersurfaces} and Schoen--Simon \cite{SS-RegularityStableMinimalHypersurfaces} (see also the recent proof by Bellettini \cite{B-SSYandSSviaDeGiorgi}), for $2 \leq n \leq 6$, compactness properties have been proven for $\mathfrak{M} (N, g, \Lambda, I)$ by Sharp \cite{Sharp-minimal-hypersurface-with-bounded-index}, and for $\mathfrak{C}_{H} (N, g, \Lambda, I)$ by Bourni--Sharp--Tinaglia \cite{BST-compactness-bubbling-CMC}.
In dimension $n + 1 = 3$, it is worth noting that various other compactness results have been shown for minimal surfaces by Choi--Schoen \cite{CS-MinimalEmbeddingsPositiveRicciCurvature}, Anderson \cite{A-CurvatureEstimatesMinimalSurfaces3Manifolds}, Ros \cite{R-CompactnessMinimalSurfacesFiniteTotalCurvature}, and White \cite{W-CompactnessMinimalSurfaces}, and for $H$-CMC surfaces by Sun \cite{S-CompactnessCMC3-Manifold}.
We note that $\mathfrak{M} (N, g, \Lambda, I)$ is sequentially compact (under the correct notion of convergence), whereas for $\mathfrak{C}_{H} (N, g, \Lambda, I)$, we must expand our class to quasi-embedded, $H$-CMC hypersurfaces (see Definition \ref{def: quasi-embedded hypersurfaces}).
We denote this enlarged class by $\overline{\mathfrak{C}_{H}} (N, g, \Lambda, I)$. 
We briefly describe this notion of convergence, with full details described in point 1 of Definition \ref{def: bubble convergence}. 
Consider a sequence $\{M_{k}\} \subset \mathfrak{M} (N, g, \Lambda, I)$ (resp. $\mathfrak{C}_{H} (N, g, \Lambda, I)$), then after potentially taking a subsequence and renumerating, there is a smooth, closed, embedded minimal hypersurface (resp. $H$-CMC quasi-embedded hypersurface) $M_{\infty}$, and a finite set of points $\mathcal{I} \subset M_{\infty}$, where $|\mathcal{I}| \leq I$, such that on compact subsets of $N \setminus \mathcal{I}$, $M_{k}$ will converge to $M_{\infty}$, smoothly and locally graphically, with integer multiplicity potentially greater than $1$. 
It is then shown that $M_{\infty} \in \mathfrak{M} (N, g, \Lambda, I)$ (resp. $\overline{\mathfrak{C}_{H}} (N, g, \Lambda, I)$).
The set of points $\mathcal{I} \subset M_{\infty}$, is defined by the condition that for each $y \in \mathcal{I}$, there exists a sequence points $\{x_{k}^{y} \in M_{k}\}_{k \in \mathbb{N}}$, such that $x_{k}^{y} \rightarrow y$, and the curvature $|A_{M_{k}} (x_{k}^{y})|$, blows up as $k \rightarrow \infty$.
Thus we call $\mathcal{I}$ the singular set of the convergence. 

\bigskip 

In a bid to understand the formation of such singularities, a bubble analysis was carried out by Chodosh--Ketover--Maximo \cite{CKM-MinimalHypersurfacesBoundedIndex}, Buzano--Sharp \cite{BS-QQ-estimates-minimal-hypersurfaces-bounded-index-and-area}, and Bourni--Sharp--Tinaglia \cite{BST-compactness-bubbling-CMC}. 
Zooming in at appropriate rates, along particular sequences of points converging onto $\mathcal{I}$, yields a complete, embedded, non-planar, minimal hypersurface in $\mathbb{R}^{n + 1}$, of finite index, with Euclidean volume growth at infinity. 
These minimal hypersurfaces in $\mathbb{R}^{n + 1}$ are referred to as the `bubbles', and they are the singularity models at the singular points of the convergence.
The hypersurface $M_{\infty}$ is referred to as the `base'.
This terminology is borrowed from other non-linear geometric problems.
See Figure \ref{fig: picture of bubble converging sequence} for a heuristic picture.
In the case of $n = 2$, a bubble analysis has been carried out by Ros \cite{R-CompactnessMinimalSurfacesFiniteTotalCurvature}, in $\mathbb{R}^{3}$ with the standard Euclidean metric, assuming uniform bounds on the total curvature instead of the Morse index, and by White \cite{W-CompactnessMinimalSurfaces}, in general $3$-manifolds, assuming uniform bounds on genus instead of Morse index.

\begin{center}

\begin{figure}

    \tikzset{every picture/.style={line width=0.75pt}} 

    \begin{tikzpicture}[x=0.75pt,y=0.75pt,yscale=-1,xscale=1]
    
    \draw   (52,109.67) .. controls (52,59.59) and (92.59,19) .. (142.67,19) .. controls (192.74,19) and (233.33,59.59) .. (233.33,109.67) .. controls (233.33,159.74) and (192.74,200.33) .. (142.67,200.33) .. controls (92.59,200.33) and (52,159.74) .. (52,109.67) -- cycle ;
    \draw   (261,108.67) .. controls (261,58.59) and (301.59,18) .. (351.67,18) .. controls (401.74,18) and (442.33,58.59) .. (442.33,108.67) .. controls (442.33,158.74) and (401.74,199.33) .. (351.67,199.33) .. controls (301.59,199.33) and (261,158.74) .. (261,108.67) -- cycle ;
    \draw   (470,113.67) .. controls (470,63.59) and (510.59,23) .. (560.67,23) .. controls (610.74,23) and (651.33,63.59) .. (651.33,113.67) .. controls (651.33,163.74) and (610.74,204.33) .. (560.67,204.33) .. controls (510.59,204.33) and (470,163.74) .. (470,113.67) -- cycle ;
    \draw  [dash pattern={on 0.84pt off 2.51pt}] (266,321.67) .. controls (266,271.59) and (306.59,231) .. (356.67,231) .. controls (406.74,231) and (447.33,271.59) .. (447.33,321.67) .. controls (447.33,371.74) and (406.74,412.33) .. (356.67,412.33) .. controls (306.59,412.33) and (266,371.74) .. (266,321.67) -- cycle ;
    \draw  [dash pattern={on 0.84pt off 2.51pt}] (52,320.67) .. controls (52,270.59) and (92.59,230) .. (142.67,230) .. controls (192.74,230) and (233.33,270.59) .. (233.33,320.67) .. controls (233.33,370.74) and (192.74,411.33) .. (142.67,411.33) .. controls (92.59,411.33) and (52,370.74) .. (52,320.67) -- cycle ;
    \draw  [dash pattern={on 0.84pt off 2.51pt}] (473,319.67) .. controls (473,269.59) and (513.59,229) .. (563.67,229) .. controls (613.74,229) and (654.33,269.59) .. (654.33,319.67) .. controls (654.33,369.74) and (613.74,410.33) .. (563.67,410.33) .. controls (513.59,410.33) and (473,369.74) .. (473,319.67) -- cycle ;
    \draw    (60.33,71.33) .. controls (145.33,94.33) and (216.33,152.33) .. (52,109.67) ;
    \draw    (203.33,174.33) .. controls (186.33,154.33) and (99.33,97.33) .. (231.33,130.33) ;
    \draw    (148.33,122.33) .. controls (148.33,122.33) and (153.33,131.33) .. (160.33,125.33) ;
    \draw    (330.33,100.33) .. controls (334.33,100.33) and (375.33,123.33) .. (327.33,109.33) ;
    \draw    (372.33,124.33) .. controls (347.33,110.33) and (361.33,111.33) .. (373.69,114.33) ;
    \draw    (350.33,114.33) .. controls (350.33,112.33) and (354.33,119.33) .. (359.33,114.33) ;
    \draw    (62.33,277.67) .. controls (200.33,324.67) and (142.33,332.67) .. (60.33,360.67) ;
    \draw    (221.33,365.67) .. controls (131.33,334.67) and (156.67,299) .. (227.33,287.67) ;
    \draw    (271.33,289.67) .. controls (384.33,289.67) and (375.33,354.67) .. (271.33,353.67) ;
    \draw    (443.33,350.67) .. controls (339.33,354.67) and (359.67,293) .. (443.33,292.67) ;
    \draw    (477.33,292.67) .. controls (578.33,295.67) and (574.33,349.67) .. (478.33,351.67) ;
    \draw    (648.33,352.33) .. controls (562.33,352.33) and (559.33,293.67) .. (650.33,290.67) ;
    \draw    (147.33,324.33) .. controls (151.33,329.33) and (164.33,327.33) .. (163.33,326.33) ;
    \draw    (351.33,327.33) .. controls (355.33,332.33) and (372.33,333.33) .. (373.33,327.33) ;
    \draw    (551.33,325.33) .. controls (561.33,334.33) and (579.33,332.33) .. (583.33,325.33) ;
    \draw  [dash pattern={on 0.84pt off 2.51pt}] (112.67,125.83) .. controls (112.67,103.65) and (130.65,85.67) .. (152.83,85.67) .. controls (175.02,85.67) and (193,103.65) .. (193,125.83) .. controls (193,148.02) and (175.02,166) .. (152.83,166) .. controls (130.65,166) and (112.67,148.02) .. (112.67,125.83) -- cycle ;
    \draw  [dash pattern={on 0.84pt off 2.51pt}] (326.98,114.33) .. controls (326.98,101.44) and (337.44,90.98) .. (350.33,90.98) .. controls (363.23,90.98) and (373.69,101.44) .. (373.69,114.33) .. controls (373.69,127.23) and (363.23,137.69) .. (350.33,137.69) .. controls (337.44,137.69) and (326.98,127.23) .. (326.98,114.33) -- cycle ;
    \draw  [line width=6] [line join = round][line cap = round] (560.33,112.67) .. controls (560.33,112.67) and (560.33,112.67) .. (560.33,112.67) ;
    \draw    (147.33,165.67) -- (147.33,225.67) ;
    \draw [shift={(147.33,227.67)}, rotate = 270] [color={rgb, 255:red, 0; green, 0; blue, 0 }  ][line width=0.75]    (10.93,-3.29) .. controls (6.95,-1.4) and (3.31,-0.3) .. (0,0) .. controls (3.31,0.3) and (6.95,1.4) .. (10.93,3.29)   ;
    \draw    (350.33,137.69) -- (351.31,222.67) ;
    \draw [shift={(351.33,224.67)}, rotate = 269.34] [color={rgb, 255:red, 0; green, 0; blue, 0 }  ][line width=0.75]    (10.93,-3.29) .. controls (6.95,-1.4) and (3.31,-0.3) .. (0,0) .. controls (3.31,0.3) and (6.95,1.4) .. (10.93,3.29)   ;
    \draw    (560.67,113.67) -- (562.3,219.67) ;
    \draw [shift={(562.33,221.67)}, rotate = 269.12] [color={rgb, 255:red, 0; green, 0; blue, 0 }  ][line width=0.75]    (10.93,-3.29) .. controls (6.95,-1.4) and (3.31,-0.3) .. (0,0) .. controls (3.31,0.3) and (6.95,1.4) .. (10.93,3.29)   ;
    \draw    (261,108.67) .. controls (316,93.67) and (294.33,99.33) .. (327.33,109.33) ;
    \draw    (261.33,93.33) .. controls (304.33,76.33) and (316.33,95.33) .. (330.33,100.33) ;
    \draw    (372.33,124.33) .. controls (401.33,136.33) and (412.33,119.33) .. (439.33,137.33) ;
    \draw    (373.69,114.33) .. controls (411.33,126.33) and (402.67,111.67) .. (439.33,125.33) ;
    \draw    (471.33,102.33) .. controls (512.33,84.33) and (520,90) .. (560.67,113.67) ;
    \draw    (560.67,113.67) .. controls (590.33,139.33) and (611.33,99.33) .. (648.33,135.33) ;
    
    \draw (607,129.4) node [anchor=north west][inner sep=0.75pt]    {$M_{\infty}$};
    \draw (571,97.4) node [anchor=north west][inner sep=0.75pt]    {$y$};
    \draw (609,358.73) node [anchor=north west][inner sep=0.75pt]    {$\Sigma$};
    
    \end{tikzpicture}

\caption{The top row depicts a local picture of a converging sequence about a point $y \in \mathcal{I}$, which converges with multiplicity 2 on the base $M_{\infty}$. 
The second row then depicts a dilation of the dotted circles in the top row, which when we take a limit, as seen in the last column, allows us to see a catenoid as the bubble $\Sigma$.}\label{fig: picture of bubble converging sequence}

\end{figure}
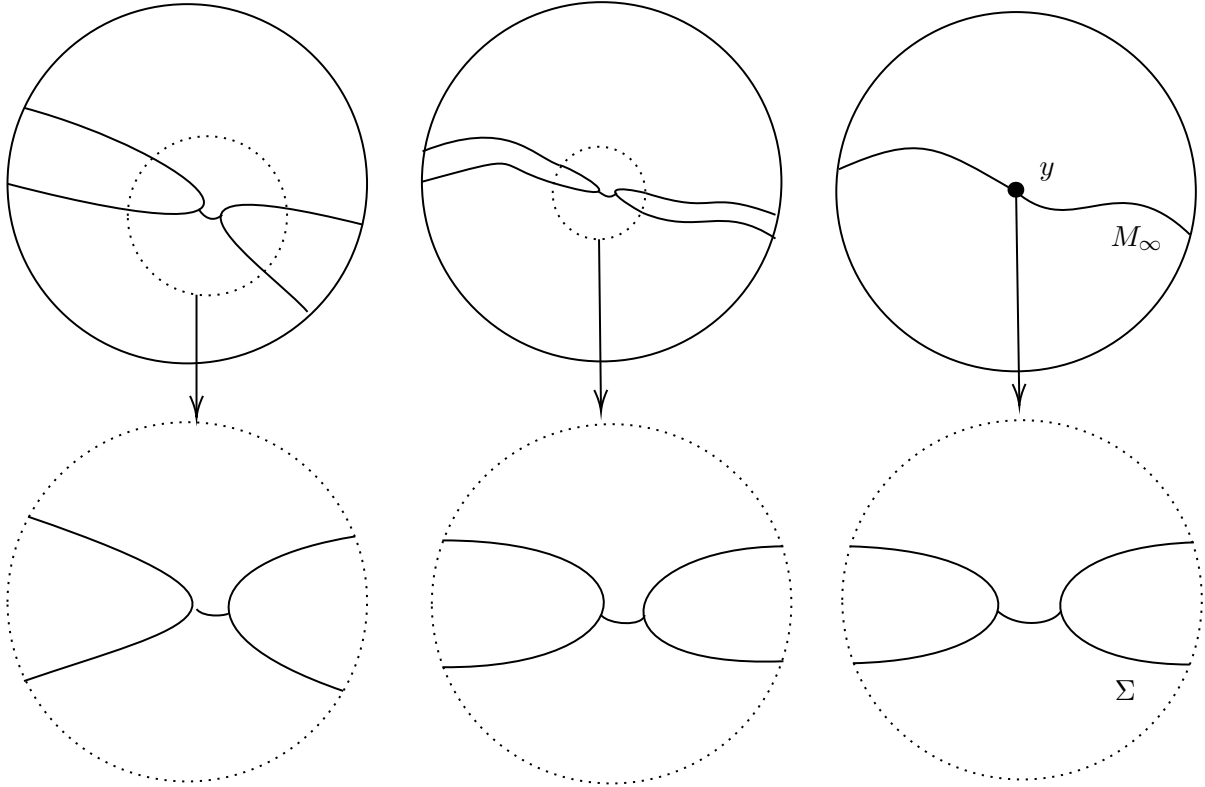

\end{center}

One may be interested in certain information about the hypersurfaces along these sequences, for example; genus (\cite{CKM-MinimalHypersurfacesBoundedIndex}), index and total curvature (\cite{BS-QQ-estimates-minimal-hypersurfaces-bounded-index-and-area, BST-compactness-bubbling-CMC}).
Understanding the formation of these singularities through this bubble analysis allows us to track this information along the sequence, and how it behaves when taking the limit $M_{k} \rightarrow M_{\infty}$. 
For example, if we have our sequence $\{M_{k}\}$ as above, and we know all our `bubbles' are given by $\Sigma^{1}, \ldots, \Sigma^{J} \subset \mathbb{R}^{n + 1}$, then we say that $M_{k} \rightarrow (M_{\infty}, \Sigma^{1}, \ldots, \Sigma^{J})$ `bubble converges' (see Definition \ref{def: bubble convergence} for a detailed definition), and \cite{BS-QQ-estimates-minimal-hypersurfaces-bounded-index-and-area,BST-compactness-bubbling-CMC},
\begin{equation}\label{eqn: lower semicontinuity of index}
    \text{ind} \, (M_{\infty}) + \sum_{j = 1}^{J} \text{ind} \, (\Sigma^{j}) \leq \liminf_{k \rightarrow \infty} \, \text{ind} \, (M_{k}).
\end{equation}
This inequality gives a quantitative way of accounting for some of the index lost when taking the limit $M_{k} \rightarrow M_{\infty}$. 
In this work we are interested in proving an opposite inequality, which will give a finer analysis and description of the index along such a converging sequence, and shows that in certain situations, this bubble analysis will account for all the index in the limit.

\bigskip

Our main result is Theorem \ref{thm: main theorem intro} below, however we first illustrate the conclusions in a simplified setting with the following special case. 
We delay the statement of Theorem \ref{thm: main theorem intro} until the end of this section.

\bigskip 

\begin{cor}\label{cor: theorem for two-sided minimal case}
    Consider a compact Riemannian manifold $(N, g)$, without boundary and of dimension $n + 1$, $3 \leq n \leq 6$. 
    Let $\{M_{k}\}$ be a sequence of smooth, closed, embedded minimal hypersurfaces of $(N, g)$, such that $M_{k} \rightarrow (M_{\infty}, \Sigma^{1}, \ldots, \Sigma^{J})$ bubble converges as in Definition \ref{def: bubble convergence}, with $M_{\infty} \subset N$ being a smooth, connected, two-sided, closed, embedded minimal hypersurface.
    Then:
    \begin{eqnarray*}
        \limsup_{k \rightarrow \infty} ( \text{ind} \, (M_{k}) + \text{nul} \, (M_{k}) ) &\leq& m (\text{ind} \, (M_{\infty}) + \text{nul} \, (M_{\infty})) \\
        && + \sum_{j = 1}^{J} \text{ind} \, (\Sigma^{j}) + \text{nul}_{\omega_{\Sigma^{j}, R}} \, (\Sigma^{j}),
    \end{eqnarray*}
    where $m \in \mathbb{Z}_{\geq 1}$ is the multiplicity of the convergence onto $M_{\infty}$. 
    Here, 
    \begin{equation*}
        \text{ind} \, (\Sigma^{j}) = \lim_{S \rightarrow \infty} \text{ind}_{B_{S}^{n + 1} (0)} \, (\Sigma^{j}),
    \end{equation*}
    and $R$ may be chosen to be any finite positive real number greater than some $R_{0} = R_{0} (\Sigma^{1}, \ldots, \Sigma^{J}) \in [1, \infty)$, and
    \begin{equation*}
        \omega_{\Sigma^{j}, R} (x) = \begin{cases}
            R^{-2}, \, x \in B_{R}^{n + 1} (0) \cap \Sigma^{j}, \\
            |x|^{-2}, \, x \in \Sigma^{j} \setminus B_{R}^{n + 1} (0).
        \end{cases}
    \end{equation*}
    and, 
    \begin{equation*}
        \text{nul}_{\omega_{\Sigma^{j}, R}} \, (\Sigma^{j}) = \text{dim} \, \{ f \in C^{\infty} (\Sigma^{j}) \colon \Delta f + |A_{\Sigma^{j}}|^{2} f = 0, \, f^{2} \, \omega_{\Sigma^{j}, R} \in L^{1} (\Sigma^{j}), \, |\nabla f|^{2} \in L^{1} (\Sigma^{j}) \}.
    \end{equation*}
\end{cor}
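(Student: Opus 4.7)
The plan is to argue by contradiction using a weighted eigenvalue problem along $\{M_k\}$, following the strategy of Da Lio--Gianocca--Rivi\`ere. Suppose, along a subsequence not renamed, that $\text{ind}(M_k) + \text{nul}(M_k) \geq I+1$, where $I$ denotes the right-hand side of the inequality. For each $k$, choose a positive weight $w_k$ on $M_k$ and consider the weighted Jacobi eigenvalue problem $L_{M_k} \phi = \lambda w_k \phi$, where $L_{M_k}$ is the Jacobi operator of $M_k$. By comparison with the unweighted problem, the first $I+1$ weighted eigenvalues $\lambda_k^1 \leq \cdots \leq \lambda_k^{I+1}$ remain $\leq 0$ and admit a $w_k$-orthonormal family of eigenfunctions $\phi_k^1, \ldots, \phi_k^{I+1}$.

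The weight $w_k$ is engineered so it pulls back, under each component of the bubble convergence, to the weight attached to the limit object. Explicitly, on the base region, where $M_k$ is the graph of $m$ sheets over compact subsets of $M_\infty \setminus \mathcal{I}$, take $w_k \equiv 1$. Around each curvature-concentration point $x_k^j \to y_j \in \mathcal{I}$ with bubble scale $r_k^j \to 0$, take $w_k = (r_k^j)^{-2}$ on the bubble region $B_{R r_k^j}(x_k^j) \cap M_k$, so that after dilation by $(r_k^j)^{-1}$ it becomes the constant $R^{-2}$ on $B_R^{n+1}(0) \cap \Sigma^j$; on the annular neck region between the bubble and base scales, take $w_k \sim \text{dist}(\,\cdot\,, x_k^j)^{-2}$, which dilates to $|x|^{-2}$ on the ends of $\Sigma^j$. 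These match $\omega_{\Sigma^j,R}$, so the rescaled weighted eigenvalue problem on the bubble scale limits to $L_{\Sigma^j} \psi = \lambda\, \omega_{\Sigma^j,R}\, \psi$, whose non-positive eigenspace has dimension exactly $\text{ind}(\Sigma^j) + \text{nul}_{\omega_{\Sigma^j,R}}(\Sigma^j)$. On the base, the unweighted limit problem contributes $m(\text{ind}(M_\infty) + \text{nul}(M_\infty))$ via the $m$ sheets.

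Passing to limits, $\lambda_k^i \to \lambda^i \leq 0$ along a further subsequence. Cut $\phi_k^i$ by a partition of unity into base, neck, and bubble pieces. The base pieces, viewed sheet-by-sheet as graph functions, converge in $C^\infty_{\text{loc}}(M_\infty \setminus \mathcal{I})$ to Jacobi eigenfunctions of $L_{M_\infty}$ with eigenvalue $\lambda^i$, which extend across $\mathcal{I}$ by standard removable-singularity estimates given the finite energy inherent in the weighted orthonormality. The bubble pieces, after dilation by $(r_k^j)^{-1}$, converge smoothly on compact subsets of $\Sigma^j$ to weighted Jacobi eigenfunctions satisfying $f^2 \omega_{\Sigma^j,R} \in L^1$ and $|\nabla f|^2 \in L^1$. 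Reading off these limits for $i = 1, \ldots, I+1$ produces tuples in an ambient space of total dimension $I$, and a linear algebra argument using the surviving weighted orthonormality yields $I+1$ linearly independent tuples in this $I$-dimensional space --- a contradiction.

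The main obstacle is controlling the neck pieces: a priori the $L^2(w_k)$-mass of each $\phi_k^i$ could concentrate on the neck, rendering the base and bubble limits simultaneously trivial and destroying the weighted orthonormality in the limit. This is precisely where the Lorentz--Sobolev inequality is needed. On the annular neck, the Jacobi potential $|A|^2 \sim \text{dist}(\,\cdot\,, x_k^j)^{-2}$ has critical Hardy scaling, at which the standard $W^{1,2}$ embedding narrowly fails to absorb the potential term into the gradient energy via the eigenvalue inequality. A Lorentz refinement of the Sobolev embedding (paired against $L^{2,1}$ for the dual estimate) supplies the logarithmic gain needed to bound the weighted neck mass of each $\phi_k^i$ by a quantity tending to zero as the neck's conformal length diverges, closing the argument.
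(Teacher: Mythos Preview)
Your outline is essentially the paper's own argument: the same weight, the same passage to weighted eigenfunctions, the same base/bubble convergence, the same injectivity-via-neck-stability endgame, and Corollary~\ref{cor: theorem for two-sided minimal case} is indeed just the two-sided specialisation of the main theorem.

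One point is mischaracterised and would trip you up if you tried to fill in details. On the neck it is \emph{not} the case that $|A_k|^2 \sim \operatorname{dist}(\,\cdot\,,x_k^j)^{-2}$; rather, point~\ref{pt: neck condition in bubble convergence} of Definition~\ref{def: bubble convergence} together with the curvature estimate forces $|A_k|^2 + |R_k| \le \varepsilon\, \omega_k$ on the neck for any prescribed $\varepsilon>0$ once the neck parameters are chosen. The Lorentz--Sobolev input is then not a ``logarithmic gain'' (that is the $n=2$ picture, which the paper explicitly excludes) but the fact that $\omega_k \in L(n/2,\infty)$ with a uniform bound, so that $\int \omega_k f^2 \le C\,\|\omega_k\|_{(n/2,\infty)}\,\|f\|_{(2^*,2)}^2 \le C'\int|\nabla f|^2$; combined with $|V_k|\le\varepsilon\,\omega_k$ this yields a \emph{fixed positive} lower bound on the weighted Rayleigh quotient on each neck component (Lemma~\ref{lem: Strict stability of the neck}). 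The contradiction in Claim~\ref{claim: we cannot converge to zero} is then that a cut-off of $h_k$ supported on the neck would have nearly zero quadratic form but weighted $L^2$-mass bounded below, violating this strict stability. Also note that identifying the dimension of the target space as exactly $I$ requires the nontrivial equivalences of weighted and unweighted non-positive eigenspaces on both $M_\infty$ and the non-compact $\Sigma^j$ (Propositions~\ref{prop: equivalence of weighted and unweighted eigenspaces} and~\ref{prop: equiv of espace for non-compact manifold}), which you asserted without comment.
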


The inequality in Corollary \ref{cor: theorem for two-sided minimal case} can be further strengthened by noting that in this situation if $m \geq 2$, then $\text{ind} \, (M_{\infty}) = 0$ (\cite[Claim 6]{Sharp-minimal-hypersurface-with-bounded-index}), and $\text{nul} \, (M_{\infty}) = 1$ (as the first eigenvalue of the stability operator will be simple).
Moreover, Theorem \ref{thm: main theorem intro} (and hence Corollary \ref{cor: theorem for two-sided minimal case}) also hold for $n \geq 7$, under the additional assumption that the bubbles $\Sigma^{1}, \ldots, \Sigma^{J}$, have finite total curvature.

\bigskip 

Results on the lower semicontinuity of index along converging sequences (\ref{eqn: lower semicontinuity of index}), are common in the literature. 
For certain classes of minimal hypersurfaces see Sharp \cite{Sharp-minimal-hypersurface-with-bounded-index}, Buzano--Sharp \cite{BS-QQ-estimates-minimal-hypersurfaces-bounded-index-and-area}, Ambrozio--Carlotto--Sharp \cite{ACS-minimal-hypersurface-bounded-pth-eigenvalue} and Ambrozio--Buzano--Carlotto--Sharp \cite{ABCS-Free-Boundary-Bubble-Analysis}, and for certain classes of CMC hypersurfaces see Bourni--Sharp--Tinaglia \cite{BST-compactness-bubbling-CMC}.
In the setting of Allen--Cahn solutions see Le \cite{L-SecondInnerVariationOfAllenCahn}, Hiesmayr \cite{Fritz-index-paper}, Gaspar \cite{Gaspar-index} and Mantoulidis \cite{Mantoulidis_2022}.
For the setting of of bubble converging harmonic maps see Moore--Ream \cite[Theorem 6.1]{MR-MinimalTwoSpheresLowIndex}, and Hirsch--Lamm \cite[Theorem 1.1]{HL-morse-index-estimates}.

\bigskip

The opposite upper semicontinuity inequality (Theorem \ref{thm: main theorem intro}) is more intricate. 
We recall a few examples of such results from the literature. 
When convergence happens with multiplicity one for sequences of critical points of the Allen--Cahn functional, upper semicontinuity of the index plus nullity has been established by Chodosh--Mantoulidis \cite[Theorem 1.9]{CM-Allen-Cahn-in-3-manifolds} and Mantoulidis \cite[Theorem 1 (c)]{Mantoulidis_2022}.
In the case of bubble converging harmonic maps such an inequality was first established by Yin \cite{Y-GeneralisedNeckAnalysis,Y-HigherOrderNeckAnalysis}, and then by Da Lio--Gianocca--Rivi\`{e}re \cite{DGR-morse-index-stability} and Hirsch--Lamm \cite{HL-morse-index-estimates}. 
Note that in \cite{DGR-morse-index-stability} and \cite[Section 6]{HL-morse-index-estimates} the proofs are for a bubble converging sequence of critical points for a general class of conformally invariant lagrangians (fixed along the sequence).
The method of Da Lio--Gianocca--Rivi\`{e}re, has also been extended to prove upper semicontinuity of index plus nullity in the setting of Willmore immersions by Michelat--Rivi\`{e}re \cite{MR-IndexStabilityWillmoreOne}, and to biharmonic maps by Michelat \cite{M-IndexStabilityBiharmonicMaps}. 

\bigskip 

When combined with the lower semicontinuity of index, the inequality in Theorem \ref{thm: main theorem intro} shows that in the case of the limiting hypersurface being two-sided and minimal (as is the case of Corollary \ref{cor: theorem for two-sided minimal case}), the index along the sequence can be \emph{fully} accounted for in the limit.
Thus we should view Theorem \ref{thm: main theorem intro} as saying that we cannot lose index to the neck, or index cannot merely just disappear in the bubble convergence of Chodosh--Ketover--Maximo \cite{CKM-MinimalHypersurfacesBoundedIndex} and Buzano--Sharp \cite{BS-QQ-estimates-minimal-hypersurfaces-bounded-index-and-area} for minimal hypersurfaces (in dimensions $3 \leq n \leq 6)$. 

\bigskip 

In order to conclude that the inequality in Theorem \ref{thm: main theorem intro} is non-trivial, we must show that for each bubble, $\Sigma^{j}$, of finite index, $\text{nul}_{\omega_{\Sigma^{j}, R}} \, (\Sigma^{j}) < + \infty$. 
This is shown in Proposition \ref{prop: finiteness of nullity of Sigma}. 
Proposition \ref{prop: finiteness of nullity of Sigma} also has the following Corollary which may be of interest to some readers (and may be compared with \cite[Corollary 2]{Li-Yau-SchrodingerEquationandEigenvalueProblem}). 

\begin{cor}\label{cor: finiteness of index of minimal immersions}
    Let $\Sigma$ be a complete, connected, $n$-dimensional manifold, $n \geq 3$, and $\iota \colon \Sigma \rightarrow \mathbb{R}^{n + 1}$ be a two-sided, proper, minimal immersion, with finite total curvature, 
    \begin{equation*}
        \int_{\Sigma} |A_{\Sigma}|^{n} < + \infty,
    \end{equation*}
    and Euclidean volume growth at infinity,
    \begin{equation*}
        \limsup_{r \rightarrow \infty} \frac{\mathcal{H}^{n} (\iota (\Sigma) \cap B_{r}^{n + 1} (0))}{r^{n}} < + \infty.
    \end{equation*} 
    Then: 
    \begin{equation*}
        \text{anl-nul} \, (\Sigma) \coloneqq \text{dim} \, \{ f \in W^{1,2} (\Sigma) \colon \Delta f + |A_{\Sigma}|^{2} f = 0\} < + \infty.
    \end{equation*}
\end{cor}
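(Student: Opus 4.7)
My plan is to reduce Corollary \ref{cor: finiteness of index of minimal immersions} to Proposition \ref{prop: finiteness of nullity of Sigma}, which is already stated to give finiteness of the weighted nullity $\text{nul}_{\omega_{\Sigma, R}}(\Sigma)$ for finite-index bubbles. The idea is to establish a linear injection from the $W^{1,2}$-nullspace in $\text{anl-nul}(\Sigma)$ into the weighted nullspace in $\text{nul}_{\omega_{\Sigma, R}}(\Sigma)$, so that the dimension bound transfers for free.

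First I would verify the hypotheses of Proposition \ref{prop: finiteness of nullity of Sigma}, i.e.\ that $\Sigma$ has finite Morse index. Under the assumptions of Euclidean volume growth and $\int_\Sigma |A_\Sigma|^n < +\infty$ with $n \geq 3$, this follows from a Tysk-type argument: the Michael--Simon--Sobolev inequality on $\Sigma$ combined with H\"{o}lder's inequality gives, for functions supported outside a sufficiently large compact set $K$ where $\int_{\Sigma \setminus K} |A_\Sigma|^n$ is small,
\[
\int_{\Sigma} |A_\Sigma|^2 f^2 \, d\mathcal{H}^n \leq \Bigl( \int_{\Sigma \setminus K} |A_\Sigma|^n \Bigr)^{2/n} \|f\|_{L^{2n/(n-2)}(\Sigma)}^2 < \tfrac{1}{2} \int_\Sigma |\nabla f|^2 \, d\mathcal{H}^n,
\]
so the stability form is positive outside $K$ and the index localises to $K$. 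Next, any $W^{1,2}$ solution of $\Delta f + |A_\Sigma|^2 f = 0$ is smooth by elliptic regularity (since $|A_\Sigma|^2$ is smooth), and $|\nabla f|^2 \in L^1(\Sigma)$ is immediate.

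The remaining and main step is to verify the weighted integrability $\int_\Sigma f^2 \, \omega_{\Sigma, R} \, d\mathcal{H}^n < +\infty$. The contribution on $B_R^{n+1}(0) \cap \Sigma$ is trivial as $\omega_{\Sigma, R} \equiv R^{-2}$ there, and $f \in L^2_{\text{loc}}(\Sigma)$. Outside the ball, what is needed is a Hardy-type estimate $\int_{\Sigma \setminus B_R^{n+1}(0)} f^2 / |x|^2 \leq C \, \|\nabla f\|_{L^2(\Sigma)}^2$. For this I would invoke the Lorentz--Sobolev inequality
\[
\|f\|_{L^{2n/(n-2), 2}(\Sigma)} \leq C \, \|\nabla f\|_{L^2(\Sigma)},
\]
flagged in the abstract as a central tool of the paper. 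Euclidean volume growth gives $|x|^{-2} \in L^{n/2, \infty}(\Sigma \setminus B_R^{n+1}(0))$ since $\{ x \in \Sigma \setminus B_R^{n+1}(0) : |x|^{-2} > t \}$ has $\mathcal{H}^n$-measure at most $C t^{-n/2}$; H\"{o}lder's inequality in Lorentz spaces then yields
\[
\int_{\Sigma \setminus B_R^{n+1}(0)} \frac{f^2}{|x|^2} \, d\mathcal{H}^n \leq \bigl\| |x|^{-2} \bigr\|_{L^{n/2, \infty}} \, \|f\|_{L^{2n/(n-2), 2}(\Sigma)}^2 \leq C \, \|\nabla f\|_{L^2(\Sigma)}^2 < +\infty.
\]

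The main obstacle is securing the Lorentz--Sobolev inequality on $\Sigma$ with constants that permit this global application — precisely the analytic input that the paper develops for its neck analysis. Once that is in hand, the two verified integrability conditions show that every $W^{1,2}$ Jacobi field lies in the space counted by $\text{nul}_{\omega_{\Sigma, R}}(\Sigma)$, and Proposition \ref{prop: finiteness of nullity of Sigma} closes the argument.
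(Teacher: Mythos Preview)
Your overall strategy --- reduce to Proposition~\ref{prop: finiteness of nullity of Sigma} by showing that every $W^{1,2}$ Jacobi field lies in the weighted space $W^{1,2}_{\omega}(\Sigma)$ --- is exactly what the paper does. However, you have substantially overcomplicated the reduction, and in two places misread what is needed.

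First, Proposition~\ref{prop: finiteness of nullity of Sigma} does \emph{not} assume finite index; its hypotheses are precisely those of Corollary~\ref{cor: finiteness of index of minimal immersions} (finite total curvature and Euclidean volume growth), together with the conditions on $\omega$. So your Tysk-type argument establishing finite index, while correct, is entirely superfluous.

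Second, and more importantly, the weighted integrability $\int_{\Sigma} f^{2}\,\omega < +\infty$ is trivial: the weight satisfies $\omega \leq R^{-2}$ pointwise (it equals $R^{-2}$ inside $B_{R}^{n+1}(0)$ and $|x|^{-2} \leq R^{-2}$ outside), so for $f \in W^{1,2}(\Sigma)$ one simply has
\[
\int_{\Sigma} f^{2}\,\omega \leq R^{-2} \int_{\Sigma} f^{2} < +\infty.
\]
The paper's proof is exactly this one line: $W^{1,2}(\Sigma) \subset W^{1,2}_{\omega}(\Sigma)$ because $\omega \in L^{\infty}(\Sigma)$. Your Hardy/Lorentz--Sobolev machinery is not wrong --- one could indeed push it through on the graphical ends via Remark~\ref{rem: structure of ends of minimal immersions of finite total curvature} as in Lemma~\ref{lem: Strict stability of the neck} --- but it is solving a harder problem than the one actually posed, and the ``main obstacle'' you identify (a global Lorentz--Sobolev inequality on $\Sigma$) is an obstacle of your own making.
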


We note that as $\Sigma$ is not compact, Corollary \ref{cor: finiteness of index of minimal immersions} does not follow from analysing the spectrum of a compact operator.

\bigskip

We briefly remark on the strategy of the proof for Theorem \ref{thm: main theorem intro}, which is close to the strategy of Da Lio--Gianocca--Rivi\`{e}re \cite{DGR-morse-index-stability}.
We prove Theorem \ref{thm: main theorem intro} by reframing the problem in terms of a weighted eigenvalue problem. 
The weight is specifically chosen so that sequences of normalised weighted eigenfunctions $\{f_{k}\}$, along the sequence $\{M_{k}\}$, with non-positive weighted eigenvalues, exhibit good convergence on the base $M_{\infty}$, and the bubbles, $\Sigma^{1}, \ldots, \Sigma^{J}$.
The key steps for the proof are showing the equivalence of the weighted and unweighted eigenvalue problems (Section \ref{app: equivalence of weighted and unweighted Espaces}), the convergence on the base $M_{\infty}$ (Section \ref{sec: convergence on the base}), and the convergence on the bubbles $\Sigma^{1}, \ldots, \Sigma^{J}$ (Section \ref{sec: convergence on the bubble}), along with a Lorentz--Sobolev inequality on the neck, which shows that the normalised weighted eigenfunctions cannot concentrate on the neck (Section \ref{sec: Strict Stability of the neck}). 

\bigskip 

Due to the different settings, there are several key differences between our work and that of \cite{DGR-morse-index-stability}. 
One such difference is our use of a Lorentz--Sobolev inequality to deduce strict stability on the neck.
Another major difference is that in our setting the bubbles are non-compact. 
This poses complications in the theory of the elliptic operator on the bubble. 
In particular its spectrum may not be discrete, and thus effectively analysing the index and nullity of these bubbles is subtle.

\bigskip 

It is worth pointing out that the method used in \cite{DGR-morse-index-stability}, and in Theorem \ref{thm: main theorem intro}, is rather general. 
In the proof of Theorem \ref{thm: main theorem intro}, only a few aspects rely specifically on the mean curvature assumptions of the submanifolds. 
Thus it is plausible that the ideas and techniques could be applied to a large range of problems in which one wishes to study how an elliptic PDE behaves along a sequence of (sub)manifolds which `bubble converge' in an appropriate sense.

\bigskip 

\begin{thm}\label{thm: main theorem intro}
    For a compact Riemannian manifold $(N, g)$ without boundary, of dimension $n + 1$, $3 \leq n \leq 6$, if we have a sequence $\{ M_{k} \} \subset \mathfrak{M} (N, g)$ ($\{ M_{k} \} \subset \mathfrak{C}_{H} (N, g)$), such that $M_{k} \rightarrow (M_{\infty}, \Sigma^{1}, \ldots, \Sigma^{J})$ bubble converges as in Definition \ref{def: bubble convergence}, with $M_{\infty} = \cup_{i = 1}^{l} M^{i}$, where each $M^{i}$ is a closed minimal hypersurface (resp. closed, quasi-embedded $H$-CMC hypersurface, with $co(M^{i})$ connected) and $\theta_{|M^{i}} = m_{i} \in \mathbb{Z}_{\geq 1}$ ($\theta^{i} = m_{i} \in \mathbb{Z}_{\geq 1}$), then
    \begin{eqnarray*}
        \limsup_{k \rightarrow \infty} ( \text{ind} \, (M_{k}) + \text{nul} \, (M_{k}) ) &\leq& \sum_{i = 1}^{l} co(m)_{i} (\text{anl-ind} \, (co (M^{i}_{\infty})) + \text{anl-nul} \, (co (M^{i}_{\infty}))) \\
        && + \sum_{j = 1}^{J} \text{ind} \, (\Sigma^{j}) + \text{nul}_{\omega_{\Sigma^{j}, R}} \, (\Sigma^{j}),
    \end{eqnarray*}
    where for each $i = 1, \ldots, l$, $co (m)_{i} \in \mathbb{Z}_{\geq 1}$, is such that $co (m)_{i} \leq m_{i}$ if $M^{i}$ is one-sided, and $co (m)_{i} = m_{i}$ if $M^{i}$ is two-sided.
    Here, 
    \begin{equation*}
        \text{ind} \, (\Sigma^{j}) = \lim_{S \rightarrow \infty} \text{ind} \, (\Sigma^{j} \cap B_{S}^{n + 1} (0)),
    \end{equation*}
    and $R$ may be chosen to be any finite positive real number greater than some $R_{0} = R_{0} (\Sigma^{1}, \ldots, \Sigma^{J}) \in [1, \infty)$, and
    \begin{equation*}
        \omega_{\Sigma^{j}, R} (x) = \begin{cases}
            R^{-2}, \, x \in B_{R}^{n + 1} (0) \cap \Sigma^{j}, \\
            |x|^{-2}, \, x \in \Sigma^{j} \setminus B_{R}^{n + 1} (0).
        \end{cases}
    \end{equation*}
    and, 
    \begin{equation*}
        \text{nul}_{\omega_{\Sigma^{j}, R}} \, (\Sigma^{j}) = \text{dim} \, \{ f \in C^{\infty} (\Sigma^{j}) \colon \Delta f + |A_{\Sigma^{j}}|^{2} f = 0, \, f^{2} \, \omega_{\Sigma^{j}, R} \in L^{1} (\Sigma^{j}), \, |\nabla f|^{2} \in L^{1} (\Sigma^{j}) \}.
    \end{equation*}
\end{thm}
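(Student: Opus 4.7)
The plan is to follow the weighted eigenvalue strategy outlined by the author (adapted from Da Lio--Gianocca--Rivi\`{e}re), with the substantive new ingredients being a Lorentz--Sobolev strict-stability estimate on the neck and careful bookkeeping against the (non-discrete) bubble spectrum.

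\textbf{Setup.} On each $M_{k}$ I would introduce a weight $\omega_{k} \in C^{\infty}(M_{k})$ equal to a fixed positive constant on the ``base region'' where $M_{k} \to M_{\infty}$ converges smoothly with multiplicity, comparable to $|x - x_{k}^{y}|^{-2}$ on the annular neck attached to each $y \in \mathcal{I}$, and arranged so that the bubble rescaling around $y$ pulls it back to a weight converging pointwise to $\omega_{\Sigma^{j}, R}$. I then consider the weighted Jacobi eigenvalue problem $L_{M_{k}} f + \lambda \omega_{k} f = 0$, with $L_{M_{k}}$ the appropriate stability operator (passing to the orientation double cover on one-sided components, and using the CMC-adapted Jacobi operator in the $\mathfrak{C}_{H}$ case). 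By the equivalence in Section \ref{app: equivalence of weighted and unweighted Espaces} the number of non-positive weighted eigenvalues equals $\text{ind}(M_{k}) + \text{nul}(M_{k})$. Setting $D := \limsup_{k}(\text{ind}(M_{k}) + \text{nul}(M_{k}))$ and passing to a subsequence, fix an $L^{2}_{\omega_{k}}$-orthonormal basis $\{f_{k}^{1}, \ldots, f_{k}^{D}\}$ of the non-positive weighted eigenspace with eigenvalues $\lambda_{k}^{1} \leq \cdots \leq \lambda_{k}^{D} \leq 0$.

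\textbf{Limits on the base and bubbles.} For each fixed $\ell$, on compact subsets of $M_{\infty} \setminus \mathcal{I}$ the convergence is smooth and multi-sheeted, so restricting $f_{k}^{\ell}$ to each of the $m_{i}$ sheets over $M^{i}$ and passing to a $C^{2}_{\text{loc}}$ limit yields, after identifying the sheets exchanged by the orientation involution in the one-sided case, a tuple of $co(m)_{i}$ Jacobi fields on $co(M^{i})$, each a weak eigenfunction of $L_{co(M^{i})}$ with non-positive eigenvalue and finite $W^{1,2}$ norm, hence contributing to $\text{anl-ind}(co(M^{i})) + \text{anl-nul}(co(M^{i}))$. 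Around each singular point, rescaling at the bubble scale and pulling back $f_{k}^{\ell}$ produces a sequence converging locally to a solution of $\Delta f + |A_{\Sigma^{j}}|^{2} f + \lambda \omega_{\Sigma^{j}, R} f = 0$; the rescaling invariance of the weighted norm preserves an $L^{2}_{\omega_{\Sigma^{j}, R}}$-bound, so the limit lies in the space counted by $\text{ind}(\Sigma^{j}) + \text{nul}_{\omega_{\Sigma^{j}, R}}(\Sigma^{j})$, which is finite-dimensional by Proposition \ref{prop: finiteness of nullity of Sigma}.

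\textbf{The neck and conclusion.} The crux is showing that no $L^{2}_{\omega_{k}}$ mass of $f_{k}^{\ell}$ concentrates on the neck annuli (between the base cutoff and the bubble scale). Using the Lorentz--Sobolev embedding $\|u\|_{L^{n, 2}} \lesssim \|\nabla u\|_{L^{2}}$ together with the $L^{n/2, \infty}$-smallness of $|A_{M_{k}}|^{2}$ on conformally long annuli in the neck, I would derive
\begin{equation*}
    \int_{\text{neck}_{k}} |A_{M_{k}}|^{2} u^{2} \leq \varepsilon_{k} \int_{\text{neck}_{k}} |\nabla u|^{2}, \qquad \varepsilon_{k} \to 0,
\end{equation*}
for all $u$ supported in the neck. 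Tested against an appropriate logarithmic cutoff of $f_{k}^{\ell}$ and combined with the eigenvalue equation and the $L^{2}_{\omega_{k}}$-normalization, this forces the neck mass of $f_{k}^{\ell}$ to vanish in the limit. Hence the linear map $f_{k}^{\ell} \mapsto \big(\text{base limits on } co(M^{i}) \text{ with multiplicity } co(m)_{i}, \text{ bubble limits on } \Sigma^{j}\big)$ is asymptotically injective on a $D$-dimensional space, which yields the bound claimed in the theorem.

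\textbf{Main obstacle.} The hardest step is the neck analysis: the weight $\omega_{k}$ must be engineered so that simultaneously (i) the weighted and unweighted indices agree on $M_{k}$, (ii) the weighted norm is preserved under the bubble rescaling so that normalized eigenfunctions produce nontrivial bubble limits, and (iii) the Lorentz--Sobolev inequality delivers a vanishing-coefficient stability bound on the neck. A secondary difficulty is that the spectrum of $L_{\Sigma^{j}}$ need not be discrete, so dimension counting on the bubbles must be extracted from the variational (ball-exhausting) definition of $\text{ind}(\Sigma^{j})$ together with the a priori finiteness of $\text{nul}_{\omega_{\Sigma^{j}, R}}(\Sigma^{j})$ given by Proposition \ref{prop: finiteness of nullity of Sigma}, rather than from the spectral theory of a compact operator.
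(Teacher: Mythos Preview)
Your proposal is correct and follows essentially the same route as the paper: the weight, the equivalence of weighted and unweighted non-positive eigenspaces, the sheetwise convergence on the base, the rescaled convergence on the bubbles, the Lorentz--Sobolev strict-stability estimate on the neck, and the injectivity of the resulting linear map into the product of limit eigenspaces are all exactly what the paper does. The only cosmetic difference is that the paper phrases the neck estimate as a uniform positive lower bound on the first weighted eigenvalue (Lemma \ref{lem: Strict stability of the neck}) rather than as a vanishing coefficient $\varepsilon_{k}\to 0$, and uses a standard (not logarithmic) cutoff in the contradiction argument of Claim \ref{claim: we cannot converge to zero}; for $n\geq 3$ these are equivalent in strength.
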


\bigskip 

The exact method of proof we employ does not extend to the case of $n = 2$ ($2$ dimensional surfaces in $3$-manifolds). 
Two key reasons are the choice of weight (Remark \ref{rem: choice of weight incorrect for n equals 2}), and the criticality of the Lorentz--Sobolev inequality (Proposition \ref{prop: Lorentz Sobolev inequality on R n}) for $n = p = 2$.

\bigskip

We take a moment to comment on the terms $\text{anl-ind} \, (M_{\infty}^{i})$ and $\text{anl-nul} \, (M_{\infty}^{i})$, that appear in the statement of Theorem \ref{thm: main theorem intro}.
These terms respectively stand for the \emph{analytic index} and \emph{analytic nullity} of $M_{\infty}^{i}$. 
This refers to the index and nullity of the stability operator acting on the function space $C^{\infty} (co (M_{\infty}^{i}))$, where $co (M_{\infty}^{i})$ is a connected component of the oriented double cover of $M_{\infty}^{i} \subset N$. 
We explain the reasoning behind this with the following example, which is also demonstrated in Figure \ref{fig: touching spheres}. 
Consider a unit hypersphere in $\mathbb{R}^{n + 1}$ (this is a CMC hypersurface), then the function $f = 1$ on the hypersphere, is an eigenfunction of the stability operator, with negative eigenvalue, and corresponds to shrinking the hypersphere. 
Now consider a sequence of two disjoint unit hyperspheres in $\mathbb{R}^{n + 1}$, such that in the limit they touch at a point. 
In order to account for these eigenfunctions in the limit, we must allow for variations that act on the hyperspheres independently, even at the touching point. 
Thus we view the hyperspheres as immersions, and allow variations which `shrink' the hyperspheres separately. 
This type of variation cannot arise through an ambient vector field due to the behaviour at the touching point.
Thus in general, the analytic index and analytical nullity of $M_{\infty}$, will not be equivalent to the Morse index and nullity of $M_{\infty}$, which is customarily defined through ambient vector fields.
See Section \ref{subsec: stability operator, index and nullity} for further details. 

\begin{center}
    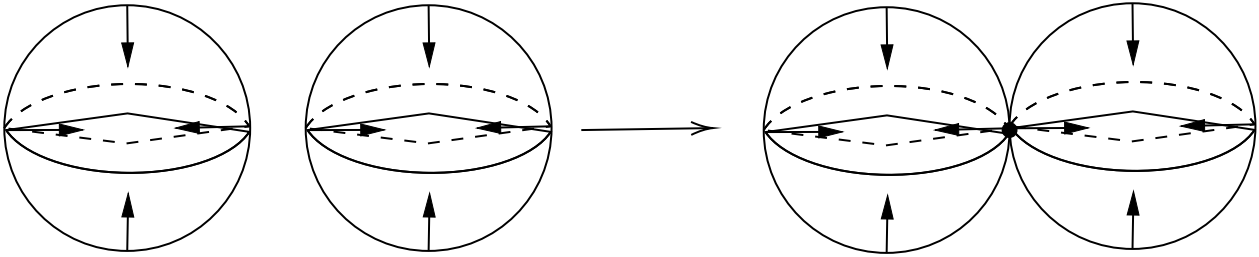
\begin{figure}[hbt!]
\tikzset{every picture/.style={line width=0.75pt}} 

\begin{tikzpicture}[x=0.75pt,y=0.75pt,yscale=-1,xscale=1]

\draw   (14,119) .. controls (14,84.76) and (41.76,57) .. (76,57) .. controls (110.24,57) and (138,84.76) .. (138,119) .. controls (138,153.24) and (110.24,181) .. (76,181) .. controls (41.76,181) and (14,153.24) .. (14,119) -- cycle ;
\draw  [draw opacity=0] (137.56,120.91) .. controls (129.52,133.2) and (104.86,141.94) .. (75.8,141.59) .. controls (47.19,141.24) and (23.07,132.2) .. (14.93,120.02) -- (76.16,111.59) -- cycle ; \draw   (137.56,120.91) .. controls (129.52,133.2) and (104.86,141.94) .. (75.8,141.59) .. controls (47.19,141.24) and (23.07,132.2) .. (14.93,120.02) ;  
\draw  [draw opacity=0][dash pattern={on 4.5pt off 4.5pt}] (14,119) .. controls (21.73,106.51) and (46.16,97.15) .. (75.22,96.75) .. controls (104.88,96.36) and (129.97,105.43) .. (137.5,118.11) -- (75.62,126.75) -- cycle ; \draw  [dash pattern={on 4.5pt off 4.5pt}] (14,119) .. controls (21.73,106.51) and (46.16,97.15) .. (75.22,96.75) .. controls (104.88,96.36) and (129.97,105.43) .. (137.5,118.11) ;  
\draw    (137.5,118.11) -- (102.33,118.95) ;
\draw [shift={(100.33,119)}, rotate = 358.63] [fill={rgb, 255:red, 0; green, 0; blue, 0 }  ][line width=0.08]  [draw opacity=0] (12,-3) -- (0,0) -- (12,3) -- cycle    ;
\draw    (14.93,120.02) -- (51.76,119.92) ;
\draw [shift={(53.76,119.91)}, rotate = 179.84] [fill={rgb, 255:red, 0; green, 0; blue, 0 }  ][line width=0.08]  [draw opacity=0] (12,-3) -- (0,0) -- (12,3) -- cycle    ;
\draw    (76,57) -- (76.31,86) ;
\draw [shift={(76.33,88)}, rotate = 269.38] [fill={rgb, 255:red, 0; green, 0; blue, 0 }  ][line width=0.08]  [draw opacity=0] (12,-3) -- (0,0) -- (12,3) -- cycle    ;
\draw    (76,181) -- (76.47,153.89) ;
\draw [shift={(76.5,151.89)}, rotate = 90.99] [fill={rgb, 255:red, 0; green, 0; blue, 0 }  ][line width=0.08]  [draw opacity=0] (12,-3) -- (0,0) -- (12,3) -- cycle    ;
\draw   (166,119) .. controls (166,84.76) and (193.76,57) .. (228,57) .. controls (262.24,57) and (290,84.76) .. (290,119) .. controls (290,153.24) and (262.24,181) .. (228,181) .. controls (193.76,181) and (166,153.24) .. (166,119) -- cycle ;
\draw  [draw opacity=0] (289.56,120.91) .. controls (281.52,133.2) and (256.86,141.94) .. (227.8,141.59) .. controls (199.19,141.24) and (175.07,132.2) .. (166.93,120.02) -- (228.16,111.59) -- cycle ; \draw   (289.56,120.91) .. controls (281.52,133.2) and (256.86,141.94) .. (227.8,141.59) .. controls (199.19,141.24) and (175.07,132.2) .. (166.93,120.02) ;  
\draw  [draw opacity=0][dash pattern={on 4.5pt off 4.5pt}] (166,119) .. controls (173.73,106.51) and (198.16,97.15) .. (227.22,96.75) .. controls (256.88,96.36) and (281.97,105.43) .. (289.5,118.11) -- (227.62,126.75) -- cycle ; \draw  [dash pattern={on 4.5pt off 4.5pt}] (166,119) .. controls (173.73,106.51) and (198.16,97.15) .. (227.22,96.75) .. controls (256.88,96.36) and (281.97,105.43) .. (289.5,118.11) ;  
\draw    (289.5,118.11) -- (254.33,118.95) ;
\draw [shift={(252.33,119)}, rotate = 358.63] [fill={rgb, 255:red, 0; green, 0; blue, 0 }  ][line width=0.08]  [draw opacity=0] (12,-3) -- (0,0) -- (12,3) -- cycle    ;
\draw    (166.93,120.02) -- (203.76,119.92) ;
\draw [shift={(205.76,119.91)}, rotate = 179.84] [fill={rgb, 255:red, 0; green, 0; blue, 0 }  ][line width=0.08]  [draw opacity=0] (12,-3) -- (0,0) -- (12,3) -- cycle    ;
\draw    (228,57) -- (228.31,86) ;
\draw [shift={(228.33,88)}, rotate = 269.38] [fill={rgb, 255:red, 0; green, 0; blue, 0 }  ][line width=0.08]  [draw opacity=0] (12,-3) -- (0,0) -- (12,3) -- cycle    ;
\draw    (228,181) -- (228.47,153.89) ;
\draw [shift={(228.5,151.89)}, rotate = 90.99] [fill={rgb, 255:red, 0; green, 0; blue, 0 }  ][line width=0.08]  [draw opacity=0] (12,-3) -- (0,0) -- (12,3) -- cycle    ;
\draw    (305,120) -- (369.33,119.03) ;
\draw [shift={(371.33,119)}, rotate = 179.14] [color={rgb, 255:red, 0; green, 0; blue, 0 }  ][line width=0.75]    (10.93,-3.29) .. controls (6.95,-1.4) and (3.31,-0.3) .. (0,0) .. controls (3.31,0.3) and (6.95,1.4) .. (10.93,3.29)   ;
\draw   (397,120) .. controls (397,85.76) and (424.76,58) .. (459,58) .. controls (493.24,58) and (521,85.76) .. (521,120) .. controls (521,154.24) and (493.24,182) .. (459,182) .. controls (424.76,182) and (397,154.24) .. (397,120) -- cycle ;
\draw  [draw opacity=0] (520.56,121.91) .. controls (512.52,134.2) and (487.86,142.94) .. (458.8,142.59) .. controls (430.19,142.24) and (406.07,133.2) .. (397.93,121.02) -- (459.16,112.59) -- cycle ; \draw   (520.56,121.91) .. controls (512.52,134.2) and (487.86,142.94) .. (458.8,142.59) .. controls (430.19,142.24) and (406.07,133.2) .. (397.93,121.02) ;  
\draw  [draw opacity=0][dash pattern={on 4.5pt off 4.5pt}] (397,120) .. controls (404.73,107.51) and (429.16,98.15) .. (458.22,97.75) .. controls (487.88,97.36) and (512.97,106.43) .. (520.5,119.11) -- (458.62,127.75) -- cycle ; \draw  [dash pattern={on 4.5pt off 4.5pt}] (397,120) .. controls (404.73,107.51) and (429.16,98.15) .. (458.22,97.75) .. controls (487.88,97.36) and (512.97,106.43) .. (520.5,119.11) ;  
\draw    (520.5,119.11) -- (485.33,119.95) ;
\draw [shift={(483.33,120)}, rotate = 358.63] [fill={rgb, 255:red, 0; green, 0; blue, 0 }  ][line width=0.08]  [draw opacity=0] (12,-3) -- (0,0) -- (12,3) -- cycle    ;
\draw    (397.93,121.02) -- (434.76,120.92) ;
\draw [shift={(436.76,120.91)}, rotate = 179.84] [fill={rgb, 255:red, 0; green, 0; blue, 0 }  ][line width=0.08]  [draw opacity=0] (12,-3) -- (0,0) -- (12,3) -- cycle    ;
\draw    (459,58) -- (459.31,87) ;
\draw [shift={(459.33,89)}, rotate = 269.38] [fill={rgb, 255:red, 0; green, 0; blue, 0 }  ][line width=0.08]  [draw opacity=0] (12,-3) -- (0,0) -- (12,3) -- cycle    ;
\draw    (459,182) -- (459.47,154.89) ;
\draw [shift={(459.5,152.89)}, rotate = 90.99] [fill={rgb, 255:red, 0; green, 0; blue, 0 }  ][line width=0.08]  [draw opacity=0] (12,-3) -- (0,0) -- (12,3) -- cycle    ;
\draw   (521,118) .. controls (521,83.76) and (548.76,56) .. (583,56) .. controls (617.24,56) and (645,83.76) .. (645,118) .. controls (645,152.24) and (617.24,180) .. (583,180) .. controls (548.76,180) and (521,152.24) .. (521,118) -- cycle ;
\draw  [draw opacity=0] (644.56,119.91) .. controls (636.52,132.2) and (611.86,140.94) .. (582.8,140.59) .. controls (554.19,140.24) and (530.07,131.2) .. (521.93,119.02) -- (583.16,110.59) -- cycle ; \draw   (644.56,119.91) .. controls (636.52,132.2) and (611.86,140.94) .. (582.8,140.59) .. controls (554.19,140.24) and (530.07,131.2) .. (521.93,119.02) ;  
\draw  [draw opacity=0][dash pattern={on 4.5pt off 4.5pt}] (521,118) .. controls (528.73,105.51) and (553.16,96.15) .. (582.22,95.75) .. controls (611.88,95.36) and (636.97,104.43) .. (644.5,117.11) -- (582.62,125.75) -- cycle ; \draw  [dash pattern={on 4.5pt off 4.5pt}] (521,118) .. controls (528.73,105.51) and (553.16,96.15) .. (582.22,95.75) .. controls (611.88,95.36) and (636.97,104.43) .. (644.5,117.11) ;  
\draw    (644.5,117.11) -- (609.33,117.95) ;
\draw [shift={(607.33,118)}, rotate = 358.63] [fill={rgb, 255:red, 0; green, 0; blue, 0 }  ][line width=0.08]  [draw opacity=0] (12,-3) -- (0,0) -- (12,3) -- cycle    ;
\draw    (519.5,119.11) -- (558.76,118.92) ;
\draw [shift={(560.76,118.91)}, rotate = 179.72] [fill={rgb, 255:red, 0; green, 0; blue, 0 }  ][line width=0.08]  [draw opacity=0] (12,-3) -- (0,0) -- (12,3) -- cycle    ;
\draw    (583,56) -- (583.31,85) ;
\draw [shift={(583.33,87)}, rotate = 269.38] [fill={rgb, 255:red, 0; green, 0; blue, 0 }  ][line width=0.08]  [draw opacity=0] (12,-3) -- (0,0) -- (12,3) -- cycle    ;
\draw    (583,180) -- (583.47,152.89) ;
\draw [shift={(583.5,150.89)}, rotate = 90.99] [fill={rgb, 255:red, 0; green, 0; blue, 0 }  ][line width=0.08]  [draw opacity=0] (12,-3) -- (0,0) -- (12,3) -- cycle    ;
\draw  [fill={rgb, 255:red, 0; green, 0; blue, 0 }  ,fill opacity=1 ] (517.5,120) .. controls (517.5,118.07) and (519.07,116.5) .. (521,116.5) .. controls (522.93,116.5) and (524.5,118.07) .. (524.5,120) .. controls (524.5,121.93) and (522.93,123.5) .. (521,123.5) .. controls (519.07,123.5) and (517.5,121.93) .. (517.5,120) -- cycle ;

\end{tikzpicture}
\caption{Sequence of two spheres coming together to touch at a point. 
The arrows attached to the spheres demonstrate the ambient vector field needed to give rise to the eigenfunction which corresponds to `shrinking' of these spheres.
Notice that at the non-embedded point this vector field is not well defined as an ambient one.}\label{fig: touching spheres}
    \end{figure}
\end{center}

Finally we make note of another special case, by considering a sequence of $H$-CMC hypersurfaces $\{M_{k}\}$, which bubble converge $M_{k} \rightarrow (\cup_{i = 1}^{l} M^{i}, \Sigma^{1}, \ldots, \Sigma^{J})$, such that each $M_{k}$ arises as the boundary of some open set $E_{k} \subset N$. 
This particular setting has been analysed by Bourni--Sharp--Tinaglia \cite{BST-compactness-bubbling-CMC}, were they showed that for each $i$, $co (m)_{i} = 1$, and, by applying a uniqueness result of Schoen \cite[Theorem 3]{S-UniquenessSymmetryMinimalSurfaces}, that each bubble $\Sigma^{j}$, will be given by a catenoid $\mathcal{C}$.
Thus, as catenoids have index $1$, we have that
\begin{eqnarray*}
        \limsup_{k \rightarrow \infty} ( \text{ind} \, (M_{k}) + \text{nul} \, (M_{k}) ) &\leq& \sum_{i = 1}^{l} (\text{anl-ind} \, (co (M^{i}_{\infty})) + \text{anl-nul} \, (co (M^{i}_{\infty}))) \\
        && \hspace{2cm} + J (1 + \text{nul}_{\omega_{\mathcal{C}, R}} \, (\mathcal{C})).
\end{eqnarray*}
In Section \ref{sec: Jacobi fields on the catenoid} we investigate $\text{nul}_{\omega_{\mathcal{C}, R}} (\mathcal{C})$, and show that it has a lower bound of $n$. 
In particular, in Section \ref{sec: Jacobi fields on the catenoid} we analyse Jacobi fields on the $n$-dimensional catenoid $\mathcal{C} \subset \mathbb{R}^{n + 1}$ (for $n \geq 3$), which arise from rigid motions of $\mathbb{R}^{n + 1}$ (translations, rotations and scalings).
We show that the only non-trivial such Jacobi fields which lie in $W^{1,2} (\mathcal{C})$, or the weighted space $W^{1,2}_{\omega_{\mathcal{C}, R}} (\mathcal{C})$, are those generated by translations which are parallel to the ends of $\mathcal{C}$.

\section*{Acknowledgements}

First and foremost I am indebted to my supervisor Costante Bellettini for time and support throughout this project. 
I also wish to thank Ben Sharp and Giuseppe Tinaglia for informative discussions at the beginning of this project. 
Moreover, I am grateful to Matilde Gianocca for taking the time to answer questions related to \cite{DGR-morse-index-stability}, as well as Jonas Hirsch for an interesting and insightful conversation on aspects related to this work, and \cite{HL-morse-index-estimates}. 
I would also like to thank Royu P.T. Wang for an interesting discussion on spectral theory, and Louis Yudowitz for his interest in this work and valuable comments on preliminary versions of the manuscript. 

\bigskip 

The author was supported by the Engineering and Physical Sciences Research Council [EP/S021590/1].
The EPSRC Centre for Doctoral Training in Geometry and Number Theory (The London School
of Geometry and Number Theory), University College London.

\tableofcontents

\section{Preliminaries}

We briefly define some notation used throughout this paper. 
Consider a Riemannian manifold $(N, g)$.
Throughout the paper, for $p \in N$, and $r \in (0, \text{inj} \, (N))$, we denote $B_{r}^{N} (p)$ to be the open geodesic ball of radius $r$ centred at point $p$.
For $r > 0$, and $p \in \mathbb{R}^{m}$, we denote, $B_{r}^{m} (p)$ as the open ball in $\mathbb{R}^{m}$ of radius $r$ centred at point $p$. 
We denote the Ricci curvature of $(N, g)$ by $\text{Ric}_{N}$.
For a proper immersion $\iota \colon S \rightarrow N$, we denote the second fundamental form of this immersion by $|A_{S}|$.

\bigskip 

We denote an $n$-rectifiable varifold $V$ in $N$, by a pair $V = (\Sigma, \theta)$, where, $\Sigma \subset N$ is an $n$-rectifiable set, and $\theta \colon \Sigma \rightarrow \mathbb{R}_{\geq 0}$.
We refer the reader to \cite[Chapter 4]{LSimonGMTNotes} for a more detailed discussion on $n$-rectifiable varifolds.

\bigskip 

\begin{definition}\label{def: quasi-embedded hypersurfaces}
    For $H > 0$, we say that a closed set $M \subset N^{n + 1}$ is a $H$-CMC quasi-embedded hypersurface of $N$, if there exists a smooth manifold $S$ of dimension $n$, and a smooth, proper, two-sided immersion $\iota \colon S \rightarrow N$, such that $M = \iota (S)$, and for every $y \in M$, there exists a $\rho > 0$, such that either:
    \begin{enumerate} 
        \item\label{point: embedded point} $B_{\rho}^{N} (y) \cap M$ is a smooth embedded, $n$-dimensional disk, of constant mean curvature $H$,
        \item\label{point: non-embedded point} $B_{\rho}^{N} (y) \cap M$ is the union of two smooth embedded, $n$-dimensional disks, both of constant mean curvature $H$, and only intersecting tangentially along a set containing $y$, with mean curvature vectors pointing in opposite directions.
    \end{enumerate}
    Points $y \in M$ which satisfy \ref{point: embedded point} will be refered to as embedded points of $M$, and we denote the set of such points by $e(M)$. 
    Points in the set $t (M) = M \setminus e (M)$, which satisfy \ref{point: non-embedded point}, will be refered to as non-embedded points of $M$.
\end{definition}

\bigskip 

It is worth noting that in the literature these hypersurfaces are defined under different names. 
In \cite{ZZ-min-max-CMC} they are refered to as almost embedded and in \cite{BST-compactness-bubbling-CMC} they are refered to as effectively embedded.
We opt for the name quasi-embedded as in \cite{BW-existence-pmc}.
We define the set $\overline{\mathfrak{C}_{H}} (N, g)$ to be the set of quasi-embedded, $H$-CMC hypersurfaces in $N$, with respect to the metric $g$.

\subsection{Bubble Convergence Preliminaries}

In this section we give a precise definition of bubble convergence (Definition \ref{def: bubble convergence}), and describe the structure of the neck regions in the bubble convergence (Remark \ref{rem: neck region can be written as a graph}), as well as the ends of the bubbles (Remark \ref{rem: structure of ends of minimal immersions of finite total curvature}).

\begin{definition}\label{def: bubble convergence}
    Consider a Riemannian manifold $(N,g)$, of dimension $n + 1$, $n \geq 2$, (and $H > 0$) along with a sequence $\{ M_{k} \}_{k \in \mathbb{N}} \subset \mathfrak{M} (N, g)$ ($\{ M_{k} \}_{k \in \mathbb{N}} \subset \mathfrak{C}_{H} (N, g)$), an $M_{\infty} \in \mathfrak{M} (N,g)$ ($M_{\infty} \in \overline{\mathfrak{C}_{H}} (N, g)$), and a collection of non-planar, complete, properly embedded, minimal hypersurfaces $\{ \Sigma_{j} \}_{j = 1}^{J}$ in $\mathbb{R}^{n + 1}$, with $J \in \mathbb{Z}_{\geq 1}$.
    Then, we say that 
    \begin{equation*}
        M_{k} \rightarrow (M_{\infty}, \Sigma_{1}, \ldots, \Sigma_{J}),
    \end{equation*}
    bubble converges, if:
    \begin{enumerate}
        \item\label{point: 1 of bubble convergence def} For the case of minimal hypersurfaces; $(M_{k}, 1) \rightarrow (M_{\infty}, \theta)$ as varifolds, where $\theta: M_{\infty} \rightarrow \mathbb{Z}_{\geq 1}$, and is constant on connected components of $M_{\infty}$.
        Moreover, there exists an at most finite collection of points $\mathcal{I} \subset M_{\infty}$, such that locally on $M_{\infty} \setminus \mathcal{I}$, $M_{k}$ converges smoothly and graphically, with multiplicity locally given by $\theta$.

        \bigskip 

        For the case of $H$-CMC hypersurfaces; $(M_{k}, 1) \rightarrow (M_{\infty}, \theta)$ as varifolds, where $M_{\infty} = \cup_{i = 1}^{a} M_{\infty}^{i}$, and each $M_{\infty}^{i}$ is a distinct, closed, quasi-embedded $H$-CMC hypersurface, such that for its respective immersion, $\iota^{i} \colon S^{i} \rightarrow M_{\infty}^{i}$, $S^{i}$ is connected, and there exists a $\theta^{i} \in \mathbb{Z}_{\geq 1}$, such that $\theta (y) = \sum_{i = 1}^{a} (|(\iota^{i})^{-1} (y)| \theta^{i})$.
        Moreover, there exists an at most finite collection of points $\mathcal{I} \subset M_{\infty}$, such that locally on $M_{\infty} \setminus \mathcal{I}$, $M_{k}$ converges smoothly and graphically, with multiplicity locally given by $\theta$.
        \item\label{point: 2 of bubble convergence def} For each $i \in \{1, \ldots, J\}$, there exist point-scale sequences $\{ (p_{k}^{i}, r_{k}^{i}) \}_{k \in \mathbb{N}}$, such that for each $k \in \mathbb{Z}_{\geq 1}$, $p_{k}^{i} \in M_{k}$, and there exists a $y^{i} \in \mathcal{I}$, such that $p_{k}^{i} \rightarrow y^{i}$, $r_{k}^{i} \rightarrow 0$.
        Moreover, for each $R \in (0, \infty)$, and large enough $k$, the connected component of $M_{k} \cap B_{R r_{k}^{i}}^{N} (p_{k}^{i})$, through $p_{k}^{i}$, denoted $\Sigma_{k}^{i, R}$, is such that, if we rescale the geodesic ball $B^{N}_{R r_{k}^{i}} (p_{k}^{i})$ by $r_{k}^{i}$, and denote
        \begin{equation*}
            \tilde{\Sigma}_{k}^{i, R} \cap B_{R}^{n + 1} (0) \coloneqq \frac{1}{r_{k}^{i}} \exp_{p_{k}^{i}}^{-1} (\Sigma_{k}^{i, R} \cap B_{R r_{k}^{i}}^{N} (p_{k}^{i})) \subset B_{R}^{n + 1} (0) \subset \mathbb{R}^{n + 1},
        \end{equation*}
        then $\tilde{\Sigma}_{k}^{i, R} \rightarrow \Sigma^{i} \cap B_{R} (0)$ smoothly and graphically, and hence with multiplicity one. 
        Furthermore, for $i \not= j$, either
        \begin{equation*}
            \lim_{k \rightarrow \infty} \frac{r_{k}^{i}}{r_{k}^{j}} + \frac{r_{k}^{j}}{r_{k}^{i}} + \frac{\text{dist}_{g}^{N} (p_{k}^{i}, p_{k}^{j})}{r_{k}^{i} + r_{k}^{j}} = \infty,
        \end{equation*} 
        or for each $R \in (0, \infty)$, and then large enough $k \in \mathbb{N}$, $p_{k}^{j} \not\in \Sigma_{k}^{i, R}$.
        \item\label{pt: neck condition in bubble convergence} Defining, 
        \begin{equation*}
            d_{k} (x) \coloneqq \min_{i = 1, \ldots, J} \text{dist}_{g}^{N} (x, p_{k}^{i}), 
        \end{equation*}
        then, 
        \begin{equation*}
            \lim_{\delta \rightarrow 0} \lim_{R \rightarrow \infty} \lim_{k \rightarrow \infty} \sup_{x \in M_{k} \cap (\cup_{y \in \mathcal{I}} B^{N}_{\delta} (y) \setminus \cup_{i = 1}^{J} \Sigma_{k}^{i, R})} \int_{M^{x}_{k} \cap B^{N}_{d_{k} (x) / 2} (x)} |A_{k}|^{n} = 0,
        \end{equation*}
        where $M_{k}^{x} \cap B^{N}_{d_{k} (x) / 2} (x)$ is the connected component of $M_{k} \cap B^{N}_{d_{k} (x) / 2} (x)$, that contains $x$. 
\end{enumerate}
\end{definition}

\bigskip 

Consider such a convergence  $M_{k} \rightarrow (M_{\infty}, \Sigma^{1}, \ldots, \Sigma^{J})$, as in Definition \ref{def: bubble convergence}, then we may remark: 
\begin{itemize}
    \item The convergence considered in the bubble analysis of Chodosh--Ketover--Maximo \cite{CKM-MinimalHypersurfacesBoundedIndex} and Buzano--Sharp \cite{BS-QQ-estimates-minimal-hypersurfaces-bounded-index-and-area} for minimal hypersurfaces, and Bourni--Sharp--Tinaglia \cite{BST-compactness-bubbling-CMC} for CMC boundaries, satisfies Definition \ref{def: bubble convergence}.
    \item As the multiplicity function $\theta \colon M_{\infty} \rightarrow \mathbb{Z}_{\geq 1}$ is uniformly bounded, and $\mathcal{H}^{n} (M_{\infty} \cap U) < + \infty$, for $U \subset N$ compact, then we may deduce by the varifold convergence that $\sup_{k} \mathcal{H}^{n} (M_{k} \cap U) < + \infty$. 
    Applying this fact, and using the monotonicity formula for varifolds with bounded mean curvature (\cite[Theorem 17.7]{LSimonGMTNotes}), we may deduce that each $\Sigma^{j}$ must have Euclidean volume growth at infinity (see \cite[Corollary 2.6]{BS-QQ-estimates-minimal-hypersurfaces-bounded-index-and-area}).
    \item The final sentence of point \ref{point: 2 of bubble convergence def} in Definition \ref{def: bubble convergence} guarantees that all of these bubbles are distinct.
    \item For each $j = 1, \ldots, J$, as $\Sigma^{j}$ is complete and properly embedded, by \cite{S-OrientabilityOfHypersurfaces} $\Sigma^{j}$ is two-sided in $\mathbb{R}^{n + 1}$.
    \item Without loss of generality we may assume that $|A_{\Sigma^{j}}| (0) > 0$, for each $j = 1, \ldots, J$.
\end{itemize}
We also now assume that each $\Sigma^{j}$ has finite index. 
This is a reasonable assumption for us to make, as if it does not hold then the result that we are interested in (Theorem \ref{thm: main theorem intro}) would hold trivially.
Thus, for $3 \leq n \leq 6$, by a result of Tysk \cite{Tysk-finite-curvature-index-minimal-surfaces}, the finite index and Euclidean volume growth at infinity, imply that each bubble $\Sigma^{j}$ will have finite total curvature; 
\begin{equation*}
    \int_{\Sigma^{j}} |A_{\Sigma^{j}}|^{n} < + \infty.
\end{equation*}

\bigskip 

The following curvature estimate (Proposition \ref{prop: curvature estimate}) is important in our analysis, and the proof follows from a standard point-picking argument, which may be found in \cite[Lecture 3]{White-IntroMinimalSurfaces}.
We briefly list notation used in the statement; If $\iota \colon M \rightarrow N$ is a proper immersion, and $S \subset N$, $x \in M$, and $\iota (x) \in S$, then we denote $\iota^{-1} (S)_{x}$ as the connected component of $\iota^{-1} (S)$ which contains $x$.

\begin{proposition}\label{prop: curvature estimate}
    Consider $(B_{1}^{n + 1} (0), g)$, where $g$ is a Riemannian metric (a constant $H > 0$), and a proper, $g$-minimal ($g$-$H$-CMC) immersion $\iota \colon M \rightarrow B_{1}^{n + 1} (0)$, such that $\iota (\partial M) \subset \partial B_{1}^{n + 1} (0)$. 
    There exists an $\varepsilon_{0} = \varepsilon_{0} (g) (= \varepsilon_{0} (g, H)) > 0$, such that for $x \in \iota^{-1} (B_{1/2}^{n + 1} (0))$, $r \in (0, 1/4)$, and $\varepsilon \in (0, \varepsilon_{0})$, if 
    \begin{equation*}
        \int_{\iota^{-1} (B_{r}^{n + 1} (\iota (x)))_{x}} |A|^{n} \leq \varepsilon, 
    \end{equation*}
    then,
    \begin{equation*}
        \sup_{y \in B_{r}^{n + 1} (\iota (x)))_{x}} \text{dist}_{g}^{B_{1}^{n + 1} (0)} (\iota (y), \partial B_{r}^{n + 1} (\iota (x))) \, |A| (y) \leq C_{\varepsilon},
    \end{equation*}
    with $C_{\varepsilon} = C (g, \varepsilon) (= C (g, H, \varepsilon)) < + \infty$.
    Moreover, $C_{\varepsilon} \rightarrow 0$ as $\varepsilon \rightarrow 0$.
\end{proposition}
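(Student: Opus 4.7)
The plan is to prove Proposition \ref{prop: curvature estimate} by the classical point-picking / blow-up contradiction argument indicated by the author's reference to White's notes. Two standard inputs are needed: (a) smooth compactness for properly immersed (C)MC hypersurfaces with a uniform $C^{0}$ bound on $|A|$, obtained by writing the immersions locally as graphs over tangent planes and bootstrapping the (C)MC graph equation, and (b) a universal lower bound $\int |A|^{n} \geq \delta_{0} = \delta_{0}(n) > 0$ on the total curvature of any non-flat, complete, properly immersed minimal hypersurface of $\mathbb{R}^{n+1}$; the latter follows, for instance, from Allard's $\varepsilon$-regularity theorem applied at a point where $|A|$ attains a definite positive value.

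\textbf{Contradiction setup and point-picking.} First I would argue by contradiction: fix $\varepsilon \in (0, \varepsilon_{0})$ with $\varepsilon_{0}$ to be chosen, and suppose there is a sequence of proper $g$-minimal (resp.\ $g$-$H$-CMC) immersions $\iota_{k} \colon M_{k} \to B_{1}^{n+1}(0)$, points $x_{k} \in \iota_{k}^{-1}(B_{1/2}^{n+1}(0))$ and radii $r_{k} \in (0, 1/4)$ satisfying the small-energy hypothesis, yet with
\begin{equation*}
    L_{k} \coloneqq \sup_{y} \text{dist}_{g}^{B_{1}^{n+1}(0)}(\iota_{k}(y), \partial B_{r_{k}}^{n+1}(\iota_{k}(x_{k}))) \, |A_{k}|(y) \longrightarrow \infty,
\end{equation*}
the supremum taken over the connected component $\iota_{k}^{-1}(B_{r_{k}}^{n+1}(\iota_{k}(x_{k})))_{x_{k}}$. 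Since the distance factor vanishes on the relevant boundary, a standard point-picking argument produces points $y_{k}$ at which the above function is at least $L_{k}/2$; writing $\lambda_{k} \coloneqq |A_{k}|(y_{k})$ and $d_{k}$ for the distance from $\iota_{k}(y_{k})$ to $\partial B_{r_{k}}^{n+1}(\iota_{k}(x_{k}))$, one obtains $\lambda_{k} d_{k} \to \infty$ together with the maximality consequence $|A_{k}| \leq 2 \lambda_{k}$ on $B_{d_{k}/2}^{n+1}(\iota_{k}(y_{k}))$.

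\textbf{Blow-up and contradiction.} Next I would rescale the ambient ball $B_{d_{k}/2}^{n+1}(\iota_{k}(y_{k}))$ by $\lambda_{k}$ via the exponential map, producing immersions $\tilde{\iota}_{k}$ into $B_{\lambda_{k} d_{k}/2}^{n+1}(0) \subset \mathbb{R}^{n+1}$; the rescaled ambient metrics converge locally smoothly to $g_{\text{euc}}$. The $\tilde{\iota}_{k}$ are minimal in the minimal case, and $(H/\lambda_{k})$-CMC in the CMC case with $H/\lambda_{k} \to 0$. By construction $|\tilde{A}_{k}| \leq 2$ and $|\tilde{A}_{k}|(\tilde{y}_{k}) = 1$, while scale invariance of $\int |A|^{n}$ in dimension $n$ preserves the small-energy bound $\int |\tilde{A}_{k}|^{n} \leq \varepsilon$. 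Combining input (a) with the monotonicity formula, which yields uniform area bounds and hence properness of the limit, gives $C^{\infty}_{\text{loc}}$ subsequential convergence on $\mathbb{R}^{n+1}$ to a complete, properly immersed, non-flat minimal hypersurface $\Sigma$ with $|A_{\Sigma}|(0) = 1$ and $\int_{\Sigma} |A_{\Sigma}|^{n} \leq \varepsilon$ by Fatou; choosing $\varepsilon_{0} < \delta_{0}(n)$ contradicts (b). The improvement $C_{\varepsilon} \to 0$ follows by running the same argument with $\varepsilon_{k} \to 0$ and $C_{\varepsilon_{k}} \not\to 0$: the limit is then a non-flat complete minimal hypersurface with $\int |A_{\Sigma}|^{n} = 0$, forcing flatness, again a contradiction.

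\textbf{Main obstacle.} The only genuinely delicate point is ensuring that the rescaled sequence subconverges to a smooth, \emph{properly immersed} complete limit in $\mathbb{R}^{n+1}$: the uniform bound $|\tilde{A}_{k}| \leq 2$ provides $C^{\infty}_{\text{loc}}$ subconvergence to an immersed limit, while properness and Euclidean volume growth, needed to invoke the lower bound (b), are secured by the monotonicity formula applied to $\tilde{\iota}_{k}$, using that the mean curvature vanishes in the limit. Everything else is a combination of well-known ingredients; in particular the argument is insensitive to the distinction between the minimal and CMC settings precisely because the rescaling sends $H \mapsto H/\lambda_{k} \to 0$.
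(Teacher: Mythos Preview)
Your proposal is correct and follows exactly the standard point-picking/blow-up argument the paper invokes (it cites White's lecture notes and gives no further details). One small remark: properness of the blow-up limit is not actually needed for the contradiction---the bound $|\tilde{A}_{k}| \leq 2$ alone gives local graphical compactness and a complete immersed minimal limit with $|A_{\Sigma}|(0) = 1$ and $\int_{\Sigma} |A_{\Sigma}|^{n} \leq \varepsilon$, which is already contradictory for small $\varepsilon$; so your appeal to the monotonicity formula for uniform area bounds (which would in fact require an a priori area bound not present in the hypotheses) can simply be dropped.
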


\bigskip 

Proposition \ref{prop: curvature estimate} implies that if we take a sequence $\delta_{k} \rightarrow 0$ and $R_{k} \rightarrow \infty$, such that $R_{k} r_{k}^{j} \rightarrow 0$ for all $j = 1, \ldots, J$, and $\delta_{k} \geq 4 R_{k} r_{k}^{j}$ for all $k \in \mathbb{Z}_{\geq 1}$ and $j = 1, \ldots, J$, and pick a sequence of points, 
\begin{equation*}
    x_{k} \in M_{k} \cap (\cup_{y \in \mathcal{I}} B^{N}_{\delta_{k}} (y) \setminus \cup_{i = 1}^{J} \Sigma_{k}^{i, R_{k}}),
\end{equation*}
and denote, $s_{k} = d_{k} (x_{k}) / 2$, and 
\begin{equation*}
    \tilde{M_{k}} \cap B_{1}^{n + 1} (0) \coloneqq \frac{1}{s_{k}} \exp_{x_{k}}^{-1} ( M_{k} \cap B_{s_{k}}^{N} (x_{k})),
\end{equation*}
then, after potentially taking a subsequence and renumerating, the component of $\tilde{M}_{k}$ through the origin must smoothly converge to a plane through the origin, on compact sets of $B_{1}^{n + 1} (0)$.

\begin{rem}\label{rem: neck region can be written as a graph}
    (Graphicality of the necks)
    Following arguments in \cite[Claim 1 of Lemma 4.1]{BS-QQ-estimates-minimal-hypersurfaces-bounded-index-and-area} for minimal hypersurfaces, and \cite[Lemma 5.6]{BST-compactness-bubbling-CMC} for CMC hypersurfaces, and replacing the use of stability with the combination of Point 3 of Definition \ref{def: bubble convergence} and the curvature estimate in Proposition \ref{prop: curvature estimate}, we have that there exists positive constants $S_{0}$ and $s_{0}$, such that for each $y \in \mathcal{I}$, $s \in (0, s_{0})$, and $S \in [S_{0}, + \infty)$, and taking large enough $k$, if we let $C_{k}$ denote a connected component of $M_{k} \cap (B_{s}^{N} (y) \setminus \cup_{i = 1}^{J} \overline{\Sigma_{k}^{i, S}})$, then there exists a non-empty open set $A_{k} = A_{k} (C_{k}, s, S) \subset T_{y} M_{\infty}$, and a smooth function,
    \begin{equation*}
        u_{k} \colon A_{k} \rightarrow \mathbb{R},
    \end{equation*}
    such that, 
    \begin{equation*}
        C_{k} = \{ \exp_{y} (x + u_{k} (x) \nu (y)) \colon x \in A_{k} \},
    \end{equation*}
    where $\nu (y)$ is a choice of unit normal to $M_{\infty}$ at $y$.
    Moreover, we have that
    \begin{equation*}
        \lim_{s \rightarrow 0} \lim_{S \rightarrow \infty} \lim_{k \rightarrow \infty} \| u_{k} \|_{C^{1}} = 0.
    \end{equation*}
\end{rem}

\bigskip

\begin{rem}\label{rem: structure of ends of minimal immersions of finite total curvature}
(Graphicality of the ends of bubbles)
Consider $\iota \colon \Sigma \rightarrow \mathbb{R}^{n + 1}$, a $n$-dimensional ($n \geq 3$), complete, connected, two-sided, proper minimal immersion, of finite total curvature, 
\begin{equation*}
    \int_{\Sigma} |A_{\Sigma}|^{n} < + \infty,
\end{equation*}
and Euclidean volume growth at infinity, 
\begin{equation*}
    \lim_{R \rightarrow \infty} \frac{\mathcal{H}^{n} (\iota (\Sigma) \cap B_{R}^{n + 1} (0))}{R^{n}} < + \infty.
\end{equation*}
Following arguments in \cite[Lemma 3 and 4]{Tysk-finite-curvature-index-minimal-surfaces} and \cite[Proposition 3]{BeChWi-CMC}, replacing any use of stability with the finite total curvature assumption and Proposition \ref{prop: curvature estimate} (see also \cite[Appendix A]{L-IndexTopologyMinimalHypersurfacesInRn}), we may conclude that $\Sigma$, will have finitely many ends, $E^{1}, \ldots, E^{m}$, such that for for each $i = 1, \ldots, m$, there exists a rotation $r_{i}$, centred at the origin, a compact set $B_{i} \subset \mathbb{R}^{n}$, and a smooth function, 
\begin{equation*}
    u_{i} \colon \mathbb{R}^{n} \setminus B_{i} \rightarrow \mathbb{R}, 
\end{equation*}
such that, 
\begin{equation*}
    r_{i} (\iota (E^{i})) = \text{graph} \, (u_{i}) \coloneqq \{ (x, u_{i} (x)) \colon x \in \mathbb{R}^{n} \setminus B_{i} \}.
\end{equation*}
Moreover, 
\begin{equation*}
    \lim_{R \rightarrow \infty} \| \nabla u_{i} \|_{L^{\infty} (\mathbb{R}^{n} \setminus B_{R}^{n} (0))} = 0.
\end{equation*}
\end{rem}

\subsection{Stability Operator, Index and Nullity}\label{subsec: stability operator, index and nullity}

Consider a smooth, closed, connected, and properly embedded hypersurface $M \subset N$. 
We denote its normal bundle by $TM^{\perp}$, and its oriented double cover by $o (M)$, which may be given by, 
\begin{equation*}
    o (M) = \{ (x, \nu) \colon x \in M, \, \nu \in T_{x}^{\perp} M, \, \text{and} \, |\nu| = 1 \}.
\end{equation*}
We then define the obvious projections $\iota \colon o (M) \rightarrow M$, and $\nu \colon o (M) \rightarrow TM^{\perp}$.
Taking a connected component of $o (M)$, which we denote by $co (M)$, we have that either, $M$ is two-sided (and therefore $TM^{\perp}$ is trivial), and thus $co (M) = M$, or $M$ is one-sided, and thus $co (M) = o (M)$.

\bigskip 

Let $M$ be a minimal hypersurface, then we consider the following bilinear form (which arises as the second variation of the area functional with respect to ambient variations \cite[Chapter 1 Section 8]{Colding-Minicozzi-Course-Minimal-Surfaces}) on sections of $TM^{\perp}$, 

\begin{equation*}
    B_{L} [v, v] \coloneqq \int_{M} |\nabla^{\perp} v|^{2} - |A_{M}|^{2} |v|^{2} - \text{Ric}_{N} (v , v) \, d \mathcal{H}^{n}. 
\end{equation*}

In the case when $M$ is two-sided, we see that this is equivalent to considering the following bilinear form, which can be extended to the function space $W^{1,2} (M)$, 
\begin{equation}\label{eqn: bilinear from}
    B_{L} [f, h] \coloneqq \int_{M} \nabla f \cdot \nabla h - |A_{M}|^{2} \, f \, h - \text{Ric}_{N} (\nu , \nu) \, f \, h \, d \mathcal{H}^{n},
\end{equation}
where $\nu$ is a choice of unit normal to $M$.

\bigskip 

In the case when $M$ is one-sided, this is equivalent to considering the bilinear form
\begin{equation}
    B_{L} [f, h] \coloneqq \int_{o (M)} \nabla f \cdot \nabla h - |A_{M} \circ \iota|^{2} \, f \, h - \text{Ric}_{N} (\nu ( \, \cdot \,) , \nu ( \, \cdot \,)) \, f \, h \, d \mathcal{H}^{n},
\end{equation}
but on the function space
\begin{equation*}
    W^{1,2} (o (M))^{-} \coloneqq \{ f \in W^{1,2} (o (M)) \colon \text{for a.e.} \, (x, \nu) \in o (M), \, f((x, \nu)) = - f ((x, - \nu)) \}.
\end{equation*}
In both scenarios, integrating by parts, we see that $B_{L}$, is the weak formulation of the well known stability (or Jacobi) operator, 
\begin{equation}
    L = \Delta + |A_{M}|^{2} + \text{Ric} (\nu, \nu).
\end{equation}
In order to stop having to differentiate between the cases when $M$ is two-sided and one-sided we introduce the notation $W^{1,2} (co (M))^{-}$, which simply equals $W^{1,2} (M)$, when $M$ is two-sided, and $W^{1,2} (o (M))^{-}$, when $M$ is one-sided.

\bigskip 

The Morse index of $M$ (denoted $\text{ind} \, (M)$) may then be defined to be the number of negative eigenvalues (counted with multiplicity) of the stability operator $L$, acting on the function space $W^{1,2} (co (M))^{-}$. 
The nullity of $M$ (denoted $\text{nul} \, (M)$), may then be defined as the number of zero eigenvalues of $L$ (counted with multiplicity) acting on the function space $W^{1,2} (co (M))^{-}$.
In this paper it is also necessary for us to also define the analytic index (denoted $\text{anl-ind} \, (M)$) and analytic nullity (denoted $\text{anl-nul} \, (M)$) of $M$, which is defined as the number of negative and zero eigenvalues of $L$, acting on the function space $W^{1,2} (co (M))$. 
We see that if $M$ is two-sided, then the Morse index and nullity and analytic index and nullity are equal.
One should think of $W^{1,2} (co (M))^{-}$, as `ambient variations', and $W^{1,2} (co (M))$ as `intrinsic variations'.

\bigskip 

If $M$ is an embedded $H$-CMC hypersurface, such that $M = \partial E$, for some non-empty, open set $E$, then one sees that the bilinear form $B_{L}$, defined on sections of $TM^{\perp}$, arises as the second variation of the functional, 
\begin{equation*}
    \mathcal{F}_{H} (E) = \mathcal{H}^{n} (\partial E) - H \mathcal{H}^{n + 1} (E),
\end{equation*}
with respect to ambient variations (\cite[Proposition 2.5]{Barbosa-doCarmo-Eschenburg-CMC}).
However, this bilinear form $B_{L}$ (and corresponding operator $L$), can be defined on $M$, even if it does not arise as the boundary, and thus (as embedded $H$-CMC hypersurfaces are two-sided for $H \not= 0$), we may similarly define the Morse index and nullity and $M$ as the number of negative and zero eigenvalues of the operator $L$, acting on the space $W^{1,2} (M)$.
In general, for a quasi-embedded $H$-CMC hypersurface $M$, given by the proper two-sided immersion $\iota \colon S \rightarrow N$, we denote $co (M) = S$, and define the analytic index and analytic nullity of $M$, as the number of negative and zero eigenvalues of the operator 
\begin{equation*}
    L = \Delta + |A_{S}|^{2} + \text{Ric}_{N} (\nu, \nu),
\end{equation*}
acting on the function space $W^{1,2} (co (M))$. 
Here, $A_{S}$ is the second fundamental form of the immersion $\iota$, and $\nu$ is a choice of unit normal for this two-sided immersion.
Again we see that if $M$ is embedded then the Morse index and nullity match up with this analytic index and analytic nullity. 

\subsection{Lorentz Spaces}

Let $(M, g)$ be a Riemannian manifold and $\mu$ be the volume measure associated to $g$. 
For a $\mu$-measureable function $f \colon M \rightarrow \mathbb{R}$, we define the function 
\begin{equation*}
    \alpha_{f} (s) \coloneqq \mu (\{ x \in M \colon |f(x)| > s\}).
\end{equation*}
We may then define the decreasing rearrangement $f^{*}$, of $f$ by, 
\begin{equation*}
    f^{*} (t) \coloneqq \begin{cases}
        \inf \{ s > 0 \colon \alpha_{f} (s) \leq t \}, & t > 0, \\
        \esssup |f|, & t = 0. 
    \end{cases}
\end{equation*}
For $p \in [1, \infty)$, and $q \in [1, \infty]$, and a $\mu$-measureable function $f$ on $M$, we define,
\begin{equation*}
    \| f \|_{(p,q)} \coloneqq \begin{cases}
        \left( \int_{0}^{\infty} t^{q / p} f^{*} (t)^{q} \frac{dt}{t} \right)^{1 / q}, & 1 \leq q < \infty, \\
        \sup_{t > 0} t^{1 / p} f^{*} (t), & q = \infty.
    \end{cases}
\end{equation*}
The Lorentz space $L (p, q) (M, g)$ is then defined to be the space of $\mu$-measureable functions $f$ such that $\|f\|_{(p,q)} < + \infty$.
It is worth noting that $\| \, \cdot \, \|_{(p, q)}$ is not a norm on $L(p,q) (M, g)$ as it does not generally satisfy the triangle inequality.
However it is possible to define an appropriate norm, $\| \cdot \|_{p,q}$ (see \cite[Definition 2.10]{CC-LorentzSpaces}), on the space $L(p, q) (M, g)$, so that the normed space $(L(p, q) (M, g), \| \cdot \|_{p,q})$ is a Banach Space \cite[Theorem 2.19]{CC-LorentzSpaces}. 
Moreover, for $1 < p < \infty$, and $1 \leq q \leq \infty$, we have the following equivalence (\cite[Proposition 2.14]{CC-LorentzSpaces}), 
\begin{equation}\label{eqn: equivalence of Lorentz norm}
    \| \, \cdot \, \|_{(p, q)} \leq \| \, \cdot \, \|_{p, q} \leq \frac{p}{p - 1} \| \, \cdot \, \|_{(p, q)}.
\end{equation}

\begin{proposition}\label{prop: Holder Lorentz inequality}
    (H\"{o}lder--Lorentz inequality, \cite[Theorem 2.9]{CC-LorentzSpaces})
    Take $p_{1}, \, p_{2} \in (1, \infty)$ and $q_{1}, \, q_{2} \in [1, \infty]$ such that $1 / p_{1} + 1 / p_{2} = 1 / q_{1} + 1 / q_{2} = 1$. 
    Then for $f \in L (p_{1}, q_{1}) (M, g)$ and $h \in L (p_{2}, q_{2}) (M, g)$, we have, 
    \begin{equation*}
        \int_{M} |f \, h | \, d \mu \leq \| f \|_{(p_{1}, q_{1})} \, \| h \|_{(p_{2}, q_{2})}.
    \end{equation*}
\end{proposition}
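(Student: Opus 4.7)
The plan is to reduce the Hölder--Lorentz inequality to the classical Hölder inequality on the half-line $(0,\infty)$ equipped with the measure $dt/t$, via the Hardy--Littlewood rearrangement inequality. First, I would establish the Hardy--Littlewood inequality, namely
\begin{equation*}
    \int_{M} |f \, h | \, d \mu \leq \int_{0}^{\infty} f^{*} (t) \, h^{*} (t) \, dt.
\end{equation*}
This is the standard step that converts the integration on $M$, where $|f|$ and $|h|$ can be correlated arbitrarily, into an integral against the decreasing rearrangements on $(0,\infty)$, which are aligned in a maximally favourable way. The proof of this step proceeds by the layer cake representation $|f(x)| = \int_{0}^{\infty} \chi_{\{|f| > s\}}(x) \, ds$, and the elementary observation that $\mu(\{|f| > s\} \cap \{|h| > r\}) \leq \min(\alpha_{f}(s), \alpha_{h}(r))$, followed by a change of variables to recognise the right-hand side as an integral of $f^{*}(t) h^{*}(t) \, dt$.

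Once Hardy--Littlewood is in hand, I would rewrite the right-hand side using $1/p_{1} + 1/p_{2} = 1$:
\begin{equation*}
    \int_{0}^{\infty} f^{*} (t) \, h^{*} (t) \, dt = \int_{0}^{\infty} \bigl( t^{1/p_{1}} f^{*} (t) \bigr) \bigl( t^{1/p_{2}} h^{*} (t) \bigr) \, \frac{dt}{t}.
\end{equation*}
Then I would apply the classical Hölder inequality on the measure space $\bigl((0,\infty), dt/t\bigr)$ with conjugate exponents $q_{1}, q_{2} \in [1,\infty]$ to the two factors. This produces exactly
\begin{equation*}
    \left( \int_{0}^{\infty} t^{q_{1} / p_{1}} f^{*} (t)^{q_{1}} \frac{dt}{t} \right)^{1/q_{1}} \left( \int_{0}^{\infty} t^{q_{2} / p_{2}} h^{*} (t)^{q_{2}} \frac{dt}{t} \right)^{1/q_{2}} = \|f\|_{(p_{1}, q_{1})} \, \|h\|_{(p_{2}, q_{2})},
\end{equation*}
matching the definition of the Lorentz quasinorms in the excerpt.

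The edge cases $q_{1} = \infty$ or $q_{2} = \infty$ require the usual minor modification: the $L^{\infty}$-Hölder bound $\int F G \, d\nu \leq \|F\|_{L^{\infty}} \|G\|_{L^{1}}$ on $(0,\infty, dt/t)$ gives the same conclusion after identifying $\sup_{t>0} t^{1/p_{i}} f^{*}(t)$ with $\|f\|_{(p_{i}, \infty)}$. The main obstacle I anticipate is really the Hardy--Littlewood step, since the rest is purely classical Hölder; one must be careful that although $\alpha_{f}$ need not be strictly decreasing or continuous (jumps occur where $|f|$ has plateaus), the layer cake argument still works because $f^{*}$ is defined as a right-continuous inverse via an infimum. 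Everything else is a clean two-step reduction.
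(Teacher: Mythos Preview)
Your proof is correct and is the standard argument for the H\"older--Lorentz inequality. Note, however, that the paper does not actually prove this proposition: it is stated with a citation to \cite[Theorem 2.9]{CC-LorentzSpaces} and used as a black box, so there is no ``paper's own proof'' to compare against. Your Hardy--Littlewood-then-H\"older reduction is exactly the argument one finds in the cited reference.
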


The following fact can be easily derived from the definition of $\| \, \cdot \, \|_{(p, q)}$.

\begin{proposition}\label{prop: Lorentz norm of function raised to a power}
    Take $1 < p < + \infty$, $1 \leq q \leq \infty$, and $\gamma > 0$, then for $f \in L (p, q) (M, g)$, we have, 
    \begin{equation*}
        \||f|^{\gamma}\|_{(\frac{p}{\gamma}, \frac{q}{\gamma})} = \| f \|^{\gamma}_{(p,q)}.
    \end{equation*}
\end{proposition}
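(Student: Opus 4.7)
The plan is to reduce the identity to a direct computation of the decreasing rearrangement of $|f|^{\gamma}$ in terms of that of $f$, after which the statement is essentially a change of variables in the definition of $\|\cdot\|_{(p,q)}$. The key observation is that because $\gamma > 0$, raising to the $\gamma$-th power interacts cleanly with distribution functions and with the infimum defining $f^{*}$.

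First I would compute the distribution function. For every $s > 0$,
\[
\alpha_{|f|^{\gamma}}(s) = \mu\bigl(\{x \in M : |f(x)|^{\gamma} > s\}\bigr) = \mu\bigl(\{x \in M : |f(x)| > s^{1/\gamma}\}\bigr) = \alpha_{f}(s^{1/\gamma}).
\]
Next I would use this to identify the decreasing rearrangement of $|f|^{\gamma}$. Substituting $u = s^{1/\gamma}$, which is a strictly increasing, continuous bijection of $(0, \infty)$, in the defining infimum gives
\[
(|f|^{\gamma})^{*}(t) = \inf\{s > 0 : \alpha_{f}(s^{1/\gamma}) \leq t\} = \inf\{u^{\gamma} : u > 0, \, \alpha_{f}(u) \leq t\} = \bigl(f^{*}(t)\bigr)^{\gamma},
\]
where the last equality uses monotonicity and continuity of $u \mapsto u^{\gamma}$ on $(0, \infty)$, which lets the infimum pass through the power.

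Finally I would substitute into the definition of the Lorentz quasi-norm. For $1 \leq q < \infty$ (so $q/\gamma < \infty$),
\[
\||f|^{\gamma}\|_{(p/\gamma,\, q/\gamma)} = \left( \int_{0}^{\infty} t^{(q/\gamma)/(p/\gamma)} (|f|^{\gamma})^{*}(t)^{q/\gamma} \, \frac{dt}{t} \right)^{\gamma/q} = \left( \int_{0}^{\infty} t^{q/p} f^{*}(t)^{q} \, \frac{dt}{t} \right)^{\gamma/q} = \|f\|_{(p,q)}^{\gamma},
\]
since the inner exponent becomes $\gamma \cdot (q/\gamma) = q$ and the outer reciprocal is $\gamma/q$. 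For $q = \infty$ (so $q/\gamma = \infty$),
\[
\||f|^{\gamma}\|_{(p/\gamma,\, \infty)} = \sup_{t > 0} t^{\gamma/p}\, (|f|^{\gamma})^{*}(t) = \sup_{t > 0} t^{\gamma/p}\, f^{*}(t)^{\gamma} = \Bigl(\sup_{t > 0} t^{1/p}\, f^{*}(t)\Bigr)^{\gamma} = \|f\|_{(p,\infty)}^{\gamma}.
\]

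There is no real obstacle here; once the identity $(|f|^{\gamma})^{*} = (f^{*})^{\gamma}$ is established, the rest is bookkeeping. The only mildly delicate step is passing the infimum through the power $\gamma$, which is precisely where the hypothesis $\gamma > 0$ (giving strict monotonicity of $u \mapsto u^{\gamma}$ on $(0, \infty)$) is used.
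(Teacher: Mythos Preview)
Your proof is correct and is precisely the direct computation from the definition that the paper alludes to; the paper itself does not give a proof, only the remark that the identity ``can be easily derived from the definition of $\| \, \cdot \, \|_{(p, q)}$.'' Your derivation of $(|f|^{\gamma})^{*} = (f^{*})^{\gamma}$ followed by substitution into the defining integral (or supremum) is exactly the intended argument.
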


The following Lorentz--Sobolev inequality on $\mathbb{R}^{n}$ is crucial in Section \ref{sec: Strict Stability of the neck}. 
For a proof see \cite[Appendix]{ALT-Prescribed-Reaarangement}

\begin{proposition}\label{prop: Lorentz Sobolev inequality on R n}
    (Lorentz--Sobolev inequality on $\mathbb{R}^{n}$)
    Take $1 < p < n$, and $p^{*} = n p / (n - p)$, then there exists a constant $C = C (n, p)$ such that for all $u \in C_{c}^{\infty} (\mathbb{R}^{n})$, 
    \begin{equation*}
        \| u \|_{(p^{*}, p)} \leq C \|\nabla u\|_{L^{p}}.
    \end{equation*}
\end{proposition}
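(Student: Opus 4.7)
The plan is to reduce the inequality to a one-dimensional weighted Hardy inequality via symmetric decreasing rearrangement. First I would replace $u$ by $|u|$, noting that $\|u\|_{(p^{*},p)}$ depends only on $|u|$ through the decreasing rearrangement, while $\|\nabla |u|\|_{L^{p}} \leq \|\nabla u\|_{L^{p}}$ by Kato's inequality. Next I would introduce the Schwarz symmetrization $u^{\#}\colon \mathbb{R}^{n} \to [0,\infty)$, the unique nonnegative, radially symmetric, and radially nonincreasing function having the same distribution as $|u|$; equivalently $u^{\#}(x) = u^{*}(\omega_{n} |x|^{n})$, where $\omega_{n}$ denotes the volume of the Euclidean unit ball. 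The P\'olya--Szeg\H{o} inequality then yields $\|\nabla u^{\#}\|_{L^{p}(\mathbb{R}^{n})} \leq \|\nabla u\|_{L^{p}(\mathbb{R}^{n})}$, so it suffices to prove the proposition with $u$ replaced by the radial function $u^{\#}$.

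The second step is to rewrite both sides as one-dimensional weighted integrals. Since $p/p^{*} = (n-p)/n$, the definition of the Lorentz norm gives directly
\[
\|u\|_{(p^{*},p)}^{p} = \int_{0}^{\infty} u^{*}(t)^{p} \, t^{-p/n} \, dt.
\]
Writing $f(r) := u^{\#}(r e_{1}) = u^{*}(\omega_{n} r^{n})$ and substituting $t = \omega_{n} r^{n}$ turns this into
\[
\|u\|_{(p^{*},p)}^{p} = n\,\omega_{n}^{1 - p/n} \int_{0}^{\infty} f(r)^{p} \, r^{n-1-p} \, dr,
\]
while passing to polar coordinates yields
\[
\|\nabla u^{\#}\|_{L^{p}}^{p} = n\,\omega_{n} \int_{0}^{\infty} |f'(r)|^{p} \, r^{n-1} \, dr.
\]

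The desired inequality is thereby reduced to the one-dimensional weighted Hardy estimate
\[
\int_{0}^{\infty} f(r)^{p} \, r^{n-p-1} \, dr \leq \left(\frac{p}{n-p}\right)^{p} \int_{0}^{\infty} |f'(r)|^{p} \, r^{n-1} \, dr,
\]
valid for absolutely continuous, nonnegative $f$ with $\lim_{r\to\infty} f(r) = 0$. This is classical: after the change of variable $s = 1/r$ it becomes the standard Hardy inequality $\int_{0}^{\infty} h(s)^{p} s^{-\alpha-1} \, ds \leq (p/\alpha)^{p} \int_{0}^{\infty} |h'(s)|^{p} s^{p-\alpha-1} \, ds$ with $\alpha = n - p > 0$; alternatively one writes $f(r) = -\int_{r}^{\infty} f'(s)\, ds$ and applies Minkowski's integral inequality. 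The hypothesis $p < n$ is precisely what makes $n - p > 0$, so the Hardy constant $p/(n-p)$ is finite, and $p > 1$ is needed for the (sharp) one-dimensional Hardy inequality to apply.

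The main technical point that requires care is regularity-theoretic rather than structural: $u^{\#}$ belongs to $W^{1,p}(\mathbb{R}^{n})$ but need not be $C^{1}$, so the P\'olya--Szeg\H{o} inequality and the radial computations must be justified at the $W^{1,p}$ level, using the absolute continuity of $u^{*}$ on $(0,\infty)$ and approximation of $u^{\#}$ by smooth radially nonincreasing functions. Once these approximations are in place, every manipulation above is an explicit calculation, and no step requires anything beyond the hypotheses $1 < p < n$ already assumed in the statement.
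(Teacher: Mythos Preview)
Your argument is correct. The paper does not prove this proposition at all; it merely cites \cite[Appendix]{ALT-Prescribed-Reaarangement} for a proof. Your route via Schwarz symmetrization, P\'olya--Szeg\H{o}, and the weighted one-dimensional Hardy inequality is exactly the standard approach and is in the spirit of the cited Alvino--Lions--Trombetti paper, which is itself built around rearrangement techniques. One small quibble: the restriction $p>1$ is not actually what drives the Hardy step (the integration-by-parts/H\"older argument you sketch works for $p\geq 1$ once $n-p>0$); the hypothesis $p>1$ enters elsewhere in the theory (e.g.\ in making $\|\cdot\|_{(p^{*},p)}$ equivalent to a genuine norm), but this does not affect the validity of your proof.
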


By standard covering and partitions of unity arguments, from Proposition \ref{prop: Lorentz Sobolev inequality on R n} we may also obtain a Lorentz--Sobolev inequality on a bounded subset of a Riemannian manifold.

\begin{proposition}\label{prop: Lorentz Sobolev inequality on manifolds}
    (Lorentz--Sobolev inequality on manifolds)
    Let $(M, g)$ be a complete Riemannian manifold of dimension $n$. 
    Take $1 < p < n$, $p^{*} = n p / (n - p)$, and an open, bounded set $\Omega \subset M$. 
    Then there exists a constant $C = C (\Omega, g, n, p) < + \infty$, such that for all $u \in C_{c}^{\infty} (\Omega)$, 
    \begin{equation*}
        \| u \|_{(p^{*}, p)} \leq C \| u \|_{W^{1, p} (M)}.
    \end{equation*}
\end{proposition}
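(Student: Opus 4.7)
The plan is to reduce to the Euclidean case (Proposition \ref{prop: Lorentz Sobolev inequality on R n}) by a standard localisation argument. Since $M$ is complete and $\Omega$ is bounded, $\overline{\Omega}$ is compact by Hopf--Rinow, so I can first cover $\overline{\Omega}$ by finitely many coordinate charts $(U_{i}, \varphi_{i})_{i = 1}^{N}$, $\varphi_{i} \colon U_{i} \rightarrow V_{i} \subset \mathbb{R}^{n}$, chosen small enough that on each chart the metric is bilipschitz to the Euclidean one, i.e. there exists $c > 1$ (depending on $\Omega$ and $g$) such that $c^{-1} \delta_{jl} \leq (g_{i})_{jl} \leq c \delta_{jl}$ as bilinear forms, where $g_{i}$ denotes the pushforward metric in the chart. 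I then fix a smooth partition of unity $\{\eta_{i}\}_{i = 1}^{N}$ subordinate to $\{U_{i}\}$ with $\sum_{i} \eta_{i} \equiv 1$ on a neighbourhood of $\overline{\Omega}$, noting that the constants $\|\eta_{i}\|_{C^{1}}$ depend only on $\Omega$ and $g$.

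Given $u \in C_{c}^{\infty} (\Omega)$, I write $u = \sum_{i} \eta_{i} u$, where each $\eta_{i} u$ is smooth and compactly supported in $U_{i}$. Using the actual Lorentz norm $\| \cdot \|_{p^{*}, p}$ (which satisfies the triangle inequality by \cite[Theorem 2.19]{CC-LorentzSpaces}) and the equivalence (\ref{eqn: equivalence of Lorentz norm}), I reduce to bounding each $\|\eta_{i} u\|_{(p^{*}, p)}$. For each $i$, the bilipschitz equivalence of the metrics implies that the decreasing rearrangements of $\eta_{i} u$ (computed on $(M, g)$) and of $(\eta_{i} u) \circ \varphi_{i}^{-1}$ (computed on Euclidean $\mathbb{R}^{n}$) agree up to multiplicative constants depending only on $c$ and $n$, hence so do the Lorentz quasi-norms. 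I can therefore transfer to the Euclidean chart and apply Proposition \ref{prop: Lorentz Sobolev inequality on R n} to the pulled-back function, yielding
\begin{equation*}
    \|\eta_{i} u\|_{(p^{*}, p)} \leq C(n, p, c) \, \|\nabla ((\eta_{i} u) \circ \varphi_{i}^{-1})\|_{L^{p} (\mathbb{R}^{n})}.
\end{equation*}
Converting the right-hand side back to the intrinsic gradient on $M$ (again using the bilipschitz comparison of metrics and volume forms) and expanding $\nabla (\eta_{i} u) = (\nabla \eta_{i}) u + \eta_{i} \nabla u$, I obtain
\begin{equation*}
    \|\eta_{i} u\|_{(p^{*}, p)} \leq C (\Omega, g, n, p) \left( \|u\|_{L^{p} (M)} + \|\nabla u\|_{L^{p} (M)} \right) = C \|u\|_{W^{1, p} (M)}.
\end{equation*}
Summing over $i$ (using the triangle inequality for $\| \cdot \|_{p^{*}, p}$) and converting back to $\| \cdot \|_{(p^{*}, p)}$ via (\ref{eqn: equivalence of Lorentz norm}) completes the proof.

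The argument is essentially routine; the only point that requires genuine care is that $\| \cdot \|_{(p^{*}, p)}$ is not subadditive, so one must pass to the equivalent genuine norm before splitting into pieces. Everything else is bookkeeping of bilipschitz constants and the behaviour of the partition of unity, and the resulting constant $C$ depends on $n$, $p$, $g$, and the size of $\Omega$ through the chosen cover and partition of unity.
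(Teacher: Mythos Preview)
Your argument is correct and follows precisely the approach the paper indicates: the paper does not give a detailed proof but simply states that the result follows ``by standard covering and partitions of unity arguments'' from the Euclidean Lorentz--Sobolev inequality (Proposition \ref{prop: Lorentz Sobolev inequality on R n}). Your write-up fills in exactly these details, and the care you take in passing to the genuine norm $\| \cdot \|_{p^{*}, p}$ before using subadditivity is the right way to handle the one nontrivial point.
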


\begin{rem}
    We may extend the inequalties in Propositions \ref{prop: Lorentz Sobolev inequality on R n} and \ref{prop: Lorentz Sobolev inequality on manifolds} to $u \in W^{1,p} (\mathbb{R}^{n})$ and $W^{1,p}_{0} (\Omega)$ respectively, by using a standard density argument, the fact $(L(p,q) (M, g), \| \, \cdot \, \|_{p, q})$ is a Banach space, and the equivalence in (\ref{eqn: equivalence of Lorentz norm}).
\end{rem}

\begin{proposition}\label{prop: W 1 2 bounds on weighted L2 inner product}
    Let $(M, g)$ be a compact Riemannian manifold of dimension $n \geq 3$, with $\omega \in L(n / 2, \infty) (M, g)$. 
    Then for any $f_{1}, \, f_{2} \in W^{1,2} (M)$, we have that there exists a $C = C (M, g) < \infty$, such that,
    \begin{equation*}
        \left| \int_{M} f_{1} f_{2} \omega \right| \leq C \|\omega\|_{(n / 2, \infty)} \|f_{1}\|_{W^{1,2} (M)} \|f_{2}\|_{W^{1,2} (M)}.
    \end{equation*}
\end{proposition}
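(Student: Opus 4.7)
The strategy is to bypass the need for any product Hölder–Lorentz inequality by first applying the ordinary Cauchy–Schwarz inequality to factor $\omega$ symmetrically across the two functions, and then to estimate the resulting single-function weighted norms by chaining together the three Lorentz-space tools provided in the preliminaries. Concretely, for $f_1, f_2 \in W^{1,2}(M)$ we have the pointwise bound $|f_1 f_2 \omega| = (f_1^2 |\omega|)^{1/2}(f_2^2 |\omega|)^{1/2}$, so Cauchy–Schwarz gives
\begin{equation*}
    \left| \int_M f_1 f_2 \omega \, d\mu \right| \;\leq\; \left(\int_M f_1^2 |\omega| \, d\mu\right)^{1/2} \left(\int_M f_2^2 |\omega| \, d\mu\right)^{1/2},
\end{equation*}
which reduces the proof to establishing the single-function bound $\int_M f^2 |\omega| \leq C \|\omega\|_{(n/2, \infty)} \|f\|_{W^{1,2}(M)}^2$ for $f \in W^{1,2}(M)$.

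To prove this single-function bound I would first apply Proposition \ref{prop: Holder Lorentz inequality} (the Hölder–Lorentz inequality) to the pair $(|\omega|, f^2)$ with dual exponents $(p_1, q_1) = (n/2, \infty)$ and $(p_2, q_2) = (n/(n-2), 1)$; one checks $\tfrac{2}{n} + \tfrac{n-2}{n} = 1$ and $0 + 1 = 1$, so
\begin{equation*}
    \int_M f^2 |\omega| \, d\mu \;\leq\; \|\omega\|_{(n/2, \infty)} \, \|f^2\|_{(n/(n-2),\, 1)}.
\end{equation*}
Next, Proposition \ref{prop: Lorentz norm of function raised to a power} with $\gamma = 2$ identifies $\|f^2\|_{(n/(n-2),\, 1)} = \|f\|_{(2n/(n-2),\, 2)}^2$. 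Finally, since $n \geq 3$ gives $1 < p = 2 < n$ with Sobolev exponent $p^* = 2n/(n-2)$, and since $M$ is a compact manifold without boundary (hence complete and bounded in itself, with $C_c^\infty(M) = C^\infty(M)$ dense in $W^{1,2}(M)$), the Lorentz–Sobolev inequality (Proposition \ref{prop: Lorentz Sobolev inequality on manifolds} applied with $\Omega = M$, together with the density extension noted in the remark following it) yields $\|f\|_{(2n/(n-2),\, 2)} \leq C \|f\|_{W^{1,2}(M)}$ for a constant $C = C(M, g)$.

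Substituting back, $\int_M f^2 |\omega| \leq C \|\omega\|_{(n/2,\infty)} \|f\|_{W^{1,2}(M)}^2$, and inserting this into the Cauchy–Schwarz bound above yields the desired inequality. I do not anticipate a real obstacle here: the only point requiring mild care is the application of Proposition \ref{prop: Lorentz Sobolev inequality on manifolds} to the whole of $M$ itself (legitimate because $M$ is compact without boundary, so it is open and bounded in itself and $W^{1,2}_0(M) = W^{1,2}(M)$), and the verification that the dual-exponent pairs in the Hölder–Lorentz step genuinely match. Avoiding any product-type Hölder inequality for Lorentz spaces by inserting the Cauchy–Schwarz step at the very beginning is what makes the argument short.
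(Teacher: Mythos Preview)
Your proposal is correct and follows exactly the same route as the paper's proof: Cauchy--Schwarz first to reduce to the single-function bound $\int_M f^2|\omega|$, then H\"older--Lorentz with exponents $(n/2,\infty)$ and $(n/(n-2),1) = (2^*/2,1)$, then Proposition~\ref{prop: Lorentz norm of function raised to a power} with $\gamma=2$, then the Lorentz--Sobolev inequality on $M$. The only cosmetic difference is that the paper writes $2^*/2$ where you write $n/(n-2)$.
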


\begin{proof}Using Propositions \ref{prop: Holder Lorentz inequality}, \ref{prop: Lorentz norm of function raised to a power} and \ref{prop: Lorentz Sobolev inequality on manifolds},
    \begin{eqnarray*}
        \left| \int_{M} f_{1} f_{2} \omega \right| &\leq& \left( \int_{M} |\omega| \, f_{1}^{2} \right)^{1/2} \, \left( \int_{M} |\omega|  \, f_{2}^{2} \right)^{1/2}, \\
        &\leq& \| \omega \|_{(n / 2, \infty)} \, \| f_{1}^{2} \|_{(2^{*} / 2, 1)}^{1/2} \, \| f_{2}^{2} \|_{(2^{*} / 2, 1)}^{1/2}, \\
        &\leq& \| \omega \|_{(n / 2, \infty)} \, \| f_{1} \|_{(2^{*}, 2)} \, \| f_{2} \|_{(2^{*}, 2)}, \\
        &\leq& C \| \omega \|_{(n / 2, \infty)} \, \| f_{1} \|_{W^{1,2} (M)} \, \| f_{2} \|_{W^{1,2} (M)}.
    \end{eqnarray*}
\end{proof}

\section{Weighted Eigenfunctions}

We proceed with the proof of Theorem \ref{thm: main theorem intro}, taking a sequence $M_{k} \rightarrow (M_{\infty}, \Sigma^{1}, \ldots, \Sigma^{J})$ as in Definition \ref{def: bubble convergence}.
We assume that for each $k \in \mathbb{Z}_{\geq 1}$, $M_{k}$ is a single connected component. 
Thus for the case of minimal hypersurfaces, $l = 1$.
The general statement is proven by applying the argument to each individual connected component of $M_{k}$.

\bigskip 

We also note that the reader may find it easier to follow the rest of this paper by only considering the case where each $M_{k}$, and $M_{\infty}$, are two-sided minimal hypersurfaces.
By doing so much of the notation introduced in Section \ref{subsec: stability operator, index and nullity} can be ignored, and we can just consider $M_{k}$ instead of $co (M_{k})$, and $W^{1,2} (M_{k})$ instead of $W^{1,2} (co (M_{k}))^{-}$. 

\subsection{The Weight}

Take, $R \geq 4$, and $\delta > 0$, such that for large enough $k$, and all $j = 1, \ldots, J$, 
\begin{equation*}
    4 R r_{k}^{j} < \delta < \min \left\{ \frac{\text{inj} (N)}{8}, \, \min_{y_{1}, \, y_{2} \in \mathcal{I}, \, y_{1} \not= y_{2}} \frac{\text{dist}_{g}^{N} (y_{1}, y_{2})}{8} \right\}.
\end{equation*} 
We first define our weight, on $co (M_{k})$, about the point scale sequence $\{ (p_{k}^{j}, r_{k}^{j}) \}_{k \in \mathbb{N}}$, 
\begin{equation*}
    \omega_{k, \delta, R}^{j} (x) \coloneqq \begin{cases}
        \max \{ \delta^{-2}, \text{dist}_{g}^{N} (\iota (x), p_{k}^{j})^{-2} \}, & \iota_{k} (x) \in M_{k} \setminus B_{R r_{k}^{j}}^{N} (p_{k}^{j}), \\
        (R r_{k}^{j})^{-2}, & \iota (x) \in B_{R r_{k}^{j}} (p_{k}^{j}) \cap M_{k},
    \end{cases}
\end{equation*}
We consider the weight,
\begin{equation*}
    \omega_{k, \delta, R} (x) \coloneqq \max_{j = 1, \ldots J} \, \omega_{k, \delta, R}^{j} (x).
\end{equation*}
We also define $\omega_{\delta} \in W^{1, \infty}_{\text{loc}} (co (M_{\infty}) \setminus \iota_{\infty}^{-1} (\mathcal{I})) \cap L (n/2, \infty) (co (M_{\infty}))$, by
\begin{equation*}
    \omega_{\delta} (x) \coloneqq \max \{ \delta^{-2}, \text{dist}_{g}^{N} (\iota_{\infty} (x), \mathcal{I})^{-2} \}.
\end{equation*}
The fact that $\omega_{\delta} \in L (n / 2, \infty) (co (M_{\infty}))$ will follow from a similar calculation to that in Claim \ref{claim: uniform L n over 2 infinity bounf on weight}.

\bigskip 

Recall the stability operator on $co (M_{k})$, 
\begin{equation*}
    L_{k} \coloneqq \Delta + |A_{k}|^{2} + R_{k},
\end{equation*}
where $A_{k}$ denotes the second fundamental form of the immersion $\iota_{k} \colon M_{k} \rightarrow N$, and $R_{k} (x) = \text{Ric}_{N} (\nu_{k} (x), \nu_{k} (x)) (\iota_{k} (x))$, and the associated bilinear form, $B_{k}$, acting on $W^{1,2} (co (M_{k}))$, 
\begin{equation*}
    B_{k} [\varphi, \psi] \coloneqq \int_{M_{k}} \nabla^{k} \varphi \cdot \nabla^{k} \psi - (|A_{k}|^{2} + R_{k}) \, \varphi \, \psi.
\end{equation*}
We define the unweighted eigenspace for an eigenvalue $\lambda$ of $L_{k}$ by
\begin{equation}\label{eqn: def of unweighted eigenspace}
    \begin{split}
    \mathcal{E} (\lambda; L_{k}, W^{1,2} (co (M_{k}))^{-}) \coloneqq \{ f \in W^{1,2} (co (M_{k}))^{-} \colon& B_{k} [f, \psi] = \lambda \int_{M_{k}} f \, \psi, \\
    & for \, all \, \psi \in W^{1,2} (co (M_{k}))^{-} \},
    \end{split} 
\end{equation}\label{eqn: def of weighted eigenspace}
and the weighted eigenspace for a weighted eignevalue $\lambda$ of $L_{k}$ by,
\begin{equation}
    \begin{split}
    \mathcal{E}_{\omega_{k, \delta, R}} (\lambda; L_{k}, W^{1,2} (co (M_{k}))^{-}) \coloneqq \{ f \in W^{1,2} (co (M_{k}))^{-} \colon& B_{k} [f, \psi] = \lambda \int_{M_{k}} f \, \psi \, \omega_{k, \delta, R}, \\
    & for \, all \, \psi \in W^{1,2} (co (M_{k}))^{-} \}.
    \end{split}
\end{equation}
Identical definitions hold for $\mathcal{E} (\lambda; L_{\infty}, W^{1,2} (co (M_{\infty}))$ and $\mathcal{E}_{\omega_{\delta}} (\lambda; L_{\infty}, W^{1,2} (co (M_{\infty})) )$.
Recall that when we refer to function space $W^{1,2} (co (M))^{-}$ we are considering \textit{`ambient variations'}, and when we consider the function space $W^{1,2} (co (M))$ we are considering \textit{`analytic variations'}.
In general, for $\lambda \in \mathbb{R}$, a Riemannian manifold $(M, g)$, with a second order, linear, elliptic operator $L$, with associated bilinear form $B_{L}$, and $\omega \colon M \rightarrow \mathbb{R}$, we define the function space, 
\begin{equation*}
    \mathcal{E}_{\omega} (\lambda; L, A, B) \coloneqq \{ f \in A \colon B_{L} [f, \varphi] = \int_{M} f \, \varphi \, \omega, \, for \, all \, \varphi \in B\},
\end{equation*} 
where $A$, and $B$ are some function spaces on $M$.
If $A = B$, we write $\mathcal{E}_{\omega} (\lambda; L, A)$, and if $\omega \equiv c$, for some constant $c$, we write, $\mathcal{E}_{c} (\lambda; L, A, B) = \mathcal{E} (c \lambda; L, A, B)$.

\begin{lem}\label{claim: weight bounding second fun form}
    There exists a $C = C(N, g, M_{\infty}, \Sigma^{1}, \ldots, \Sigma^{J}, \delta, R) < + \infty$ such that for all $k$, 
    \begin{equation}\label{eqn: bound on A k over omega}
        \frac{|A_{k}|^{2} + |R_{k}|}{\omega_{k, \delta, R}} \leq C.
    \end{equation} 
\end{lem}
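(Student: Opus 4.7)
The plan is to decompose $M_k$ into three regions---\emph{bubble}, \emph{base}, and \emph{neck}---and bound $(|A_k|^2 + |R_k|)/\omega_{k,\delta,R}$ separately on each. The Ricci term is immediate: compactness of $N$ gives $|R_k| \leq \|\mathrm{Ric}_N\|_{L^{\infty}}$, and directly from the definition one has $\omega_{k,\delta,R}(x) \geq \delta^{-2}$ everywhere (both possible values of each $\omega_{k,\delta,R}^j$ are $\geq \delta^{-2}$ under the standing hypothesis $4Rr_k^j < \delta$), so $|R_k|/\omega_{k,\delta,R} \leq \delta^{2}\|\mathrm{Ric}_N\|_{L^{\infty}}$. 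Without loss of generality I will assume $R$ is large enough and $\delta$ small enough (depending only on $N,g,M_\infty,\Sigma^1,\ldots,\Sigma^J$) for point~\ref{pt: neck condition in bubble convergence} of Definition~\ref{def: bubble convergence} to produce $L^n$-smallness below the threshold $\varepsilon_0$ of Proposition~\ref{prop: curvature estimate}; this is permissible because the claimed constant is allowed to depend on $\delta$ and $R$.

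In the bubble region, $\iota_k(x) \in B_{Rr_k^j}^N(p_k^j)$ for some $j$, and the weight obeys $\omega_{k,\delta,R}(x) \geq (Rr_k^j)^{-2}$. If the sheet through $x$ is $\Sigma_k^{j,R}$, the smooth graphical convergence of the $r_k^j$-rescaling to $\Sigma^j \cap B_R^{n+1}(0)$ (point~\ref{point: 2 of bubble convergence def} of Definition~\ref{def: bubble convergence}) supplies $|A_k|^2 \leq C(\Sigma^j,R)(r_k^j)^{-2}$; otherwise the sheet is bounded away from all blow-up points at scale $r_k^j$ and therefore converges smoothly (either to $M_\infty$ or to a different bubble), so $|A_k|^2$ is bounded independently of $k$. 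In either subcase the ratio is $\leq C(\Sigma^j,R)R^2$. In the base region $\iota_k(x)\notin\bigcup_j B_{Rr_k^j}^N(p_k^j)$ with $\mathrm{dist}(\iota_k(x),\mathcal{I})\geq\delta$, point~\ref{point: 1 of bubble convergence def} of Definition~\ref{def: bubble convergence} yields smooth graphical convergence of $M_k$ to $M_\infty$ on $\{\mathrm{dist}(\,\cdot\,,\mathcal{I})\geq\delta\}$, hence $|A_k|^2\leq C(M_\infty,\delta)$; combined with $\omega_{k,\delta,R}\geq\delta^{-2}$ this controls the ratio by $C(M_\infty,\delta)\delta^{2}$.

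The main obstacle is the neck region, $\iota_k(x) \in \bigcup_{y\in\mathcal{I}} B_\delta^N(y)\setminus \bigcup_j B_{Rr_k^j}^N(p_k^j)$. Here the weight reads $\omega_{k,\delta,R}(x) = \max\{\delta^{-2},d_k(x)^{-2}\} \geq d_k(x)^{-2}$ (the max over $j$ of $\mathrm{dist}(x,p_k^j)^{-2}$ equals $d_k(x)^{-2}$), so it suffices to establish $|A_k|(x)\,d_k(x)\leq C$. I plan to obtain this from the $\varepsilon$-regularity estimate of Proposition~\ref{prop: curvature estimate}, applied---after rescaling $B_{d_k(x)/2}^N(\iota_k(x))$ to unit ambient radius---to the connected component of $M_k$ through $\iota_k(x)$. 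The required hypothesis $\int_{M_k^x\cap B_{d_k(x)/2}^N(\iota_k(x))} |A_k|^n <\varepsilon_0$ is precisely what point~\ref{pt: neck condition in bubble convergence} of Definition~\ref{def: bubble convergence} supplies under the above choice of $\delta,R$, for $k$ sufficiently large. The resulting pointwise estimate $|A_k|(x)\,d_k(x)\leq C(g,\varepsilon_0)$ gives $|A_k|^2/\omega_{k,\delta,R}\leq C(g,\varepsilon_0)^2$, completing the bound.

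Conceptually, the weight has been engineered so that the neck curvature scale $1/d_k(x)$ predicted by Proposition~\ref{prop: curvature estimate} exactly matches $\omega_{k,\delta,R}(x)^{1/2}$; a failure of the $L^n$-smallness on neck balls would signal curvature concentration at a point, i.e.\ an additional undetected bubble, contradicting that $\Sigma^1,\ldots,\Sigma^J$ is a complete list. The only mild subtlety is the iterated quantifier order in point~\ref{pt: neck condition in bubble convergence}, which forces the a priori restriction to $\delta$ small and $R$ large noted in the first paragraph.
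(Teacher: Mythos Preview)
Your argument is correct and essentially matches the paper's: the paper frames the same three ingredients (smooth convergence on the base, smooth convergence on the bubbles, and point~\ref{pt: neck condition in bubble convergence} of Definition~\ref{def: bubble convergence} combined with Proposition~\ref{prop: curvature estimate} on the neck) as a contradiction argument---a sequence with unbounded ratio cannot concentrate on the base, the bubbles, or the neck---while you give the equivalent direct region-by-region bound. One small slip to note: when the sheet through $x$ in the bubble region lands on a \emph{different} bubble $\Sigma_k^{i,R}$, the quantity $|A_k|^2$ is not bounded independently of $k$ (it scales like $(r_k^i)^{-2}$), but the ratio still is, since then $\omega_{k,\delta,R}(x)\geq (Rr_k^i)^{-2}$ as well; your stated conclusion for that subcase is therefore correct even though the intermediate justification needs this adjustment.
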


\begin{proof}
    The proof follows from a contradiction argument. 
    We outline the basic idea, and leave details to the reader. 
    If we had a sequence of points $\{x_{k} \in M_{k}\}$, such that
    \begin{equation*}
        \frac{|A_{k}|^{2} (x_{k}) + |R_{k}| (x_{k})}{\omega_{k, \delta, R} (x_{k})} \rightarrow \infty,
    \end{equation*}
    Then by smooth convergence on $M_{\infty} \setminus \mathcal{I}$, and the bubbles $\Sigma^{1}, \ldots, \Sigma^{J}$, and the scale invariance of the quantity, we see that no subsequences can concentrate on the base $M_{\infty} \setminus \mathcal{I}$, or on the bubbles $\Sigma^{1}, \ldots, \Sigma^{J}$. 
    Thus this sequence must concentrate on the intermediate neck regions of the bubble convergence, however this cannot happen by point \ref{pt: neck condition in bubble convergence} in Definition \ref{def: bubble convergence} and Proposition \ref{prop: curvature estimate}.
\end{proof}

\begin{claim}\label{claim: uniform L n over 2 infinity bounf on weight}
    We have that there exists a $C = C (N, g, M_{\infty}, m, \delta, J) (= C (N, g, M_{\infty}, m_{1}, \ldots, m_{a}, H, \delta, J)) < + \infty$ such that for large enough $k \in \mathbb{Z}_{\geq 1}$, and $R \geq 1$, 
    \begin{equation*}
        \|\omega_{k, \delta, R} \|_{(n / 2, \infty)} \leq C.
    \end{equation*}
\end{claim}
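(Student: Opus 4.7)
The plan is to reduce the Lorentz norm bound to an estimate on the distribution function, and then estimate this distribution function using the monotonicity formula. Recall that the definition of the $(n/2,\infty)$-norm gives the equivalent reformulation
\begin{equation*}
    \|\omega_{k,\delta,R}\|_{(n/2,\infty)} = \sup_{t>0} t^{2/n}\,\omega_{k,\delta,R}^{*}(t) = \left( \sup_{s>0} s^{n/2}\,\alpha_{\omega_{k,\delta,R}}(s)\right)^{2/n},
\end{equation*}
so it suffices to show $\alpha_{\omega_{k,\delta,R}}(s)\leq C s^{-n/2}$, uniformly in $k$, for all $s>0$. Since $\omega_{k,\delta,R} = \max_{j} \omega_{k,\delta,R}^{j}$, the superlevel sets decompose as $\{\omega_{k,\delta,R}>s\} = \bigcup_{j=1}^{J}\{\omega_{k,\delta,R}^{j}>s\}$, and hence $\alpha_{\omega_{k,\delta,R}}(s) \leq \sum_{j=1}^{J} \alpha_{\omega_{k,\delta,R}^{j}}(s)$. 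Therefore the task reduces to producing the bound $\alpha_{\omega_{k,\delta,R}^{j}}(s)\leq C s^{-n/2}$ individually for each fixed $j$, with the constant $C$ independent of $k$.

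Next I would analyze the superlevel sets of $\omega_{k,\delta,R}^{j}$ by splitting according to the value of $s$, using the piecewise definition of the weight. Since $\omega_{k,\delta,R}^{j}\leq (Rr_{k}^{j})^{-2}$ everywhere, the superlevel set is empty for $s\geq(Rr_{k}^{j})^{-2}$. For $\delta^{-2}\leq s < (Rr_{k}^{j})^{-2}$, the inequality $\omega_{k,\delta,R}^{j}(x)>s$ forces $\text{dist}_{g}^{N}(\iota_{k}(x),p_{k}^{j})<s^{-1/2}$ in the outer region, while the inner ball $B^{N}_{Rr_{k}^{j}}(p_{k}^{j})$ is automatically contained in this same ambient ball (since $s^{-1/2}>Rr_{k}^{j}$), giving
\begin{equation*}
    \{\omega_{k,\delta,R}^{j}>s\}\subseteq \iota_{k}^{-1}(B^{N}_{s^{-1/2}}(p_{k}^{j})).
\end{equation*}
Finally for $s<\delta^{-2}$ the superlevel set is contained in all of $co(M_{k})$.

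To bound the measures of these sets uniformly in $k$, I would invoke the monotonicity formula for varifolds with bounded mean curvature (\cite[Theorem 17.7]{LSimonGMTNotes}, as already used in the bullet points following Definition \ref{def: bubble convergence}). Combined with the varifold convergence $(M_{k},1)\rightarrow(M_{\infty},\theta)$ and the uniform mass bound $\Lambda \coloneqq \sup_{k}\mathcal{H}^{n}_{g}(M_{k}) < \infty$ (depending only on $M_{\infty}$, the multiplicities $m$, and the ambient data), the monotonicity formula yields a uniform Euclidean volume-growth estimate $\mathcal{H}^{n}_{g}(M_{k}\cap B^{N}_{r}(p))\leq C_{mon}\,r^{n}$, valid for all $p\in N$ and $r\in(0,\text{inj}(N))$, with $C_{mon}=C_{mon}(N,g,\Lambda)$ (and $H$ in the CMC case). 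Applying this to $r = s^{-1/2} \leq \delta$ handles the range $s\geq\delta^{-2}$, while for $s<\delta^{-2}$ the trivial bound $\mathcal{H}^{n}_{g}(M_{k})\leq\Lambda\leq\Lambda\delta^{-n}s^{-n/2}$ suffices. Combining, $\alpha_{\omega_{k,\delta,R}^{j}}(s)\leq (C_{mon}+\Lambda\delta^{-n})\,s^{-n/2}$.

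Summing over $j=1,\ldots,J$ gives $\alpha_{\omega_{k,\delta,R}}(s) \leq J(C_{mon}+\Lambda\delta^{-n})\,s^{-n/2}$, which when fed back into the reformulated norm above yields the desired bound $\|\omega_{k,\delta,R}\|_{(n/2,\infty)}\leq C$ with $C = C(N,g,M_{\infty},m,\delta,J)$ (respectively depending additionally on $H,m_{1},\ldots,m_{a}$ in the CMC case), and in particular independent of $R$ and $k$. I do not expect genuine obstacles: the only mild subtlety is ensuring that the area bound $\Lambda$ and the monotonicity constant $C_{mon}$ can indeed be taken uniform in $k$, which is a direct consequence of the varifold convergence in point \ref{point: 1 of bubble convergence def} of Definition \ref{def: bubble convergence} together with the boundedness of $\theta$.
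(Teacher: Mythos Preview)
Your proposal is correct and follows essentially the same approach as the paper: both reduce to bounding the distribution function of each $\omega_{k,\delta,R}^{j}$ via the monotonicity formula, then combine over $j$. The only cosmetic difference is that the paper computes the decreasing rearrangement $f^{*}$ explicitly and then applies the quasi-triangle inequality (\ref{eqn: equivalence of Lorentz norm}) to pass from $\|\omega_{k,\delta,R}^{j}\|_{(n/2,\infty)}$ to $\|\omega_{k,\delta,R}\|_{(n/2,\infty)}$, whereas you work directly with $\alpha_{f}$ and sum the distribution functions before taking the norm; your route is marginally cleaner but the content is identical.
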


\begin{proof}
    For a $j = 1, \ldots, J$, consider $f = \omega_{k, \delta, R}^{j}$. 
    We have, 
    \begin{equation*}
        \alpha_{f} (s) = \begin{cases}
            \mathcal{H}^{n} (co (M_{k})), & s \in [0, \delta^{-2}), \\
            \mathcal{H}^{n} (\iota_{k}^{-1} (B_{1 / \sqrt{s}}^{N} (p_{k}^{j})) \cap co (M_{k})), & \delta^{-2} \leq s < (R r_{k}^{j})^{-2}, \\
            0, & s \geq (R r_{k}^{j})^{-2}.
        \end{cases}
    \end{equation*}
    We may choose $k$ large enough such that $\sup_{k} \mathcal{H}^{n} (M_{k}) \leq m \mathcal{H}^{n} (M_{\infty}) + 1$.
    Then, by the monotonicity formula (\cite[Theorem 17.7]{LSimonGMTNotes}), we have that there exists a uniform $C = C (N, g, m, M_{\infty}) (= C (N, g, M_{\infty}, m_{1}, \ldots, m_{a}, H) ) < + \infty$, such that for $r \in (0, \text{inj} (N) / 2)$, 
    \begin{equation*}
        \mathcal{H}^{n} (co (M_{k}) \cap \iota_{k}^{-1} (B_{r}^{N} (p_{k}^{j}))) \leq C r^{n}
    \end{equation*}
    Then, 
    \begin{equation*}
        \begin{cases}
        f^{*} (t) = 0, & t \geq \mathcal{H}^{n} (co (M_{k})), \\
        f^{*} (t) = \delta^{-2}, & \mathcal{H}^{n} (co (M_{k}) \cap \iota_{k}^{-1} (B_{\delta}^{N} (p_{k}^{j}))) \leq t < \mathcal{H}^{n} (co (M_{k})), \\
        f^{*} (t) \leq (C / t)^{2 /n}, & \mathcal{H}^{n} (co (M_{k}) \cap \iota_{k}^{-1} (B_{R r_{k}^{j}}^{N} (p_{k}^{j}))) \leq t \leq \mathcal{H}^{n} (co (M_{k}) \cap \iota_{k}^{-1} (B_{\delta}^{N} (p_{k}^{j}))), \\
        f^{*} (t) = (R r_{k}^{j})^{-2}, & 0 \leq t \leq \mathcal{H}^{n} (co (M_{k}) \cap \iota_{k}^{-1} (B_{R r_{k}^{j}}^{N} (p_{k}^{j}))).
        \end{cases}
    \end{equation*}
    Thus, 
    \begin{equation*}
        \| f \|_{(n / 2, \infty)} = \sup_{t > 0} t^{2 / n} f^{*} (t) \leq C, 
    \end{equation*}
    for $C = C (N, g, m, M_{\infty}, \delta) (= C (N, g, M_{\infty}, m_{1}, \ldots, m_{a}, H, \delta)) < + \infty$.
    This in turn implies,
    \begin{equation*}
        \| \omega_{k, \delta, R} \|_{(n / 2, \infty)} \leq \left(\frac{n}{n - 2} \right) \sum_{j = 1}^{J} \| \omega_{k, \delta, R}^{j} \|_{(n / 2, \infty)} \leq C,
    \end{equation*}
    for $C = C (N, g, m, M_{\infty}, \delta, J) (= C (N, g, M_{\infty}, m_{1}, \ldots, m_{a}, H, \delta, J)) < + \infty$.
    The $n / (n-2)$ factor is coming from (\ref{eqn: equivalence of Lorentz norm}).
\end{proof}

\bigskip 

\begin{rem}\label{rem: choice of weight incorrect for n equals 2}
    This choice of weight $\omega_{k, \delta, R}$ fails to work for the case of $n = 2$. 
    In \cite{DGR-morse-index-stability} (in which $n = 2$)  the choice of weight is subtle, and relies on improved estimates on the neck region of the bubbles. 
    We were unable to derive appropriate corresponding estimates on the neck regions in the setting of this paper. 
\end{rem}

\subsection{Convergence on the Base}\label{sec: convergence on the base}

Consider a sequence of functions $\{f_{k} \in W^{1,2} (co (M_{k}))^{-}\}_{k \in \mathbb{Z}_{\geq 1}}$, which satisfy the following weighted eigenvalue problem, 
\begin{equation}\label{eqn: weak weighted eigenvalue equation}
    \int_{co (M_{k})} \nabla f_{k} \cdot \nabla \varphi - (|A_{k}|^{2} + R_{k}) \, f_{k} \, \varphi = \lambda_{k} \int_{co (M_{k})} f_{k} \, \varphi \, \omega_{k, \delta, R},
\end{equation}
for all $\varphi \in W^{1,2} (co (M_{k}))^{-}$, with $\lambda_{k} \leq 0$, for all $k$. 
We take 
\begin{equation*}
    \int_{co (M_{k})} f_{k}^{2} \, \omega_{k, \delta, R} = 1,
\end{equation*}
and by Lemma \ref{claim: weight bounding second fun form}, 
\begin{eqnarray*}
    \int_{co (M_{k})} |\nabla f_{k}|^{2} \leq \int_{co (M_{k})} (|A_{k}|^{2} + |R_{k}|) f_{k}^{2} \leq C \int_{co (M_{k})} f_{k}^{2} \, \omega_{k, \delta, R} = C.
\end{eqnarray*}
Furthermore, 
\begin{equation*}
    \delta^{-2} \|f_{k}\|^{2}_{L^{2} (co (M_{k}))} \leq \int_{co (M_{k})} f_{k}^{2} \, \omega_{k, \delta, R} = 1.
\end{equation*}
Thus, for all $k \in \mathbb{N}$.,
\begin{equation*}
    \|f_{k}\|_{W^{1,2} (co (M_{k}))} \leq C = C (N, g, \delta, R, m, M_{\infty}, \Sigma^{1}, \ldots, \Sigma^{J}) < + \infty.
\end{equation*}
Using Lemma \ref{claim: weight bounding second fun form} we may also obtain a lower bound on the negative eigenvalues, 
\begin{equation}\label{eqn: lower bound on eigenvalue}
    \lambda_{k} = \lambda_{k} \int_{co (M_{k})} f_{k}^{2} \, \omega_{k, \delta, R} = \int_{co (M_{k})} |\nabla f_{k}|^{2} - (|A_{k}|^{2} + R_{k}) f_{k}^{2} \geq - C \int_{co (M_{k})} f_{k}^{2} \, \omega_{k, \delta, R} = - C. 
\end{equation}
Thus, after potentially taking a subsequence and renumerating, we may assume that $\lambda_{k} \rightarrow \lambda_{\infty} \leq 0$.

\bigskip 

We define the map, 
\begin{eqnarray*}
    F \colon co (M_{\infty}) \times \mathbb{R} &\rightarrow& N, \\
    (x, t) &\mapsto& \exp_{\iota_{\infty} (x)} (t \nu_{\infty} (x)).
\end{eqnarray*}
Note, as $M_{\infty}$ is smooth, and properly embedded, there exists a $\tau = \tau (N, M_{\infty}, g) > 0$, such that, 
\begin{equation*}
    F \colon co (M_{\infty}) \times (- \tau, \tau) \rightarrow F(co (M_{\infty}) \times (- \tau, \tau)) \subset N,
\end{equation*}
is a smooth, local diffeomorphism. 
We define the metric $\tilde{g} = F^{*} g$, on $co (M_{\infty}) \times (- \tau, \tau)$, and assume that for all $k \in \mathbb{Z}_{\geq 1}$, $M_{k} \subset F (co (M_{\infty}) \times (-\tau, \tau))$.

\bigskip 

First we consider the case of minimal hypersurfaces.
As $M_{\infty}$ is properly embedded, and $N$ is compact, we may take $\tau > 0$, such that, $F^{-1} (M_{\infty}) = co (M_{\infty}) \times \{0\}$.
For $r > 0$, define the open set $\Omega_{r} \subset co (M_{\infty})$, by, 
\begin{equation*}
    \Omega_{r} \coloneqq \iota_{\infty}^{-1} \left( M_{\infty} \setminus \bigcup_{y \in \mathcal{I}} \overline{B_{r}^{N} (y)} \right) \subset co (M_{\infty}).
\end{equation*}
We define, $M_{k}^{r} \coloneqq M_{k} \cap F (\Omega_{r} \times (- \tau, \tau))$, and $\tilde{M}_{k}^{r} = F^{-1} (M_{k}^{r}) \cap ( \Omega_{r} \times (- \tau, \tau) )$.
By the convergence described in Definition \ref{def: bubble convergence}, along with the the fact that $co(M_{\infty})$ is two-sided in $co (M_{\infty}) \times (- \tau, \tau)$, and $\theta_{|M_{\infty}} \equiv m$, for large enough $k$, there exists $m$ smooth functions,
\begin{equation*}
    u_{k}^{i, r} \colon \Omega_{r} \rightarrow (- \tau, \tau), 
\end{equation*}
such that, $u_{k}^{1, r} < u_{k}^{2, r} < \cdots < u_{k}^{m, r}$, and
\begin{equation*}
    \tilde{M}_{k}^{r} = \bigcup_{i = 1}^{m} \{ (x, u_{k}^{i, r} (x)) \colon x \in \Omega_{r} \} \subset \Omega_{r} \times (- \tau, \tau).
\end{equation*}
We also note that $u_{k}^{i, r} \rightarrow 0$ in $C^{l} (\Omega_{r})$ for all $l \in \mathbb{Z}_{\geq 1}$, and for $0 < r < s$, $u_{k}^{i, r} = u_{k}^{i, s}$ on $\Omega_{s} \subset \Omega_{r}$.
Moreover, we define the metric $g_{k} = \iota^{*}_{k} (g_{|M_{k}})$ on $co (M_{k})$, and the metric $g_{\infty} = \iota^{*}_{\infty} (g_{|M_{\infty}})$ on $co (M_{\infty})$.

\bigskip 

First we consider the case in which $M_{k}$ is one-sided. 
For each connected component of $\tilde{M}_{k}^{r}$, 
\begin{equation*}
    \tilde{M}_{k}^{i, r} \coloneqq \{ (x, u_{k}^{i, r} (x)) \colon x \in \Omega_{r} \},
\end{equation*}  
we denote the $\nu_{k}^{i,r}$ to be the choice of unit normal to $\tilde{M}_{k}^{i, r}$ (with respect to $\tilde{g}$) which points in the positive $\tau$ direction.
Through this choice of unit normal we identify $\Omega_{r}$ as a subset of $co (M_{k})$, by the map (which is a diffeomorphism onto its image)
\begin{equation}\label{eqn: definition of the map F k r}
    \begin{split}
        F_{k}^{i,r} &\colon \Omega_{r} \rightarrow co (M_{k}), \\
        x &\mapsto (F(x, u_{k}^{i,r} (x)), d F (\nu_{k}^{i,r})),
    \end{split}
\end{equation}
and define, 
\begin{equation*}
    \tilde{f}_{k}^{i,r} (x) = (f_{k} \circ F_{k}^{i,r}) (x).
\end{equation*}
We do note that $\tilde{f}_{k}^{i,r}$ depends on the choice of unit normal to $\tilde{M}_{k}^{i, r}$ that we pick, however as $f_{k} \in W^{1,2} (co (M_{k}))^{-}$, this choice is only up to a sign.

\bigskip 

For the case of $M_{k}$ being two-sided, we simply define, 
\begin{equation*}
    \begin{split}
        F_{k}^{i,r} \colon \Omega_{r} \rightarrow co (M_{k}) = M_{k}, \\
        x \mapsto F (x, u_{k}^{i,r} (x)),
    \end{split}
\end{equation*}
and define, 
\begin{equation*}
    \tilde{f}_{k}^{i,r} (x) = (f_{k} \circ F_{k}^{i,r}) (x).
\end{equation*}

\bigskip 

For the case of $H$-CMC hypersurfaces, we have that
\begin{equation*}
    co (M_{\infty}) = \sqcup_{i = 1}^{a} co (M_{\infty}^{i}), 
\end{equation*}
where each $M_{\infty}^{i}$ is a distinct, closed, quasi-embedded $H$-CMC hypersurface such that $co (M_{\infty}^{i})$ is connected.
We have that $\theta^{i} = m_{i} \in \mathbb{Z}$, and we denote, $\Omega_{r / 2}^{i} \subset co (M_{\infty}^{i})$ as before.
Then similarly to before, for each $i = 1, \ldots, a$, and large enough $k$, there exists $m_{i}$ smooth graphs ($j = 1, \ldots, m_{i}$), 
\begin{equation*}
    u_{k}^{j, i, r} \colon \Omega_{r / 2}^{i} \rightarrow (- \tau, \tau), 
\end{equation*}
such that, $u_{k}^{1, i, r} < u_{k}^{2, i, r} < \cdots < u_{k}^{m_{i}, i, r}$, $u_{k}^{j, i, r} \rightarrow 0$ in $C^{l} (\Omega_{r}^{i})$ for all $l \in \mathbb{Z}_{\geq 1}$, and, for large enough $k$,
\begin{equation*}
    M_{k} \setminus \bigcup_{y \in \mathcal{I}} B_{r}^{N} (y) \subset \bigcup_{i = 1}^{a} \bigcup_{j = 1}^{m_{i}} \{ F (x, u_{k}^{j, i, r / 2} (x)) \colon x \in \Omega_{r / 2}^{i} \}.
\end{equation*}
Define $\tilde{M}_{k}^{j, i, r} = \{ (x, u_{k}^{j, i, r} (x)) \colon x \in \Omega_{r}^{i}\}$, and as before we identify this as a subset of $co (M_{k})$, and similarly define the map $F_{k}^{j,i,r}$, and the function $\tilde{f}_{k}^{j,i,r} \in W^{1,2} (\Omega_{r}^{i})$.

\bigskip 

For ease of notation we just consider the case of minimal hypersurfaces.
For an open set $\Omega \subset \subset co (M_{\infty}) \setminus \iota_{\infty}^{-1} (\mathcal{I})$, we may take $r > 0$, such that $\Omega \subset \subset \Omega_{r}$, and then define, for large enough $k$, 
\begin{equation*}
    \tilde{f}_{k}^{i} (x) = \tilde{f}_{k}^{i, r} (x), \, x \in \Omega.
\end{equation*}
Note that this definition is independent of the choice of $0 < r < r_{0}$, for $\Omega \subset \Omega_{r_{0}}$. 
When dealing with a fixed open set $\Omega \subset \subset co (M_{\infty}) \setminus \iota_{\infty}^{-1} (\mathcal{I})$, for appropriate choices of $r$, we drop the superscript $r$ in the notation of the maps $F_{k}^{i,r}$, and functions $u_{k}^{i, r}$.
Then, choosing $k$ large enough (so that $\| u_{k}^{i} \|_{C^{1} (\Omega_{r})}$ is small enough), we have that, 
\begin{equation*}
    \| \tilde{f}_{k}^{i} \|_{W^{1,2} (\Omega)} \leq 2 \| f_{k} \|_{W^{1,2} (co(M_{k}))} \leq C.
\end{equation*}
and thus for each $i = 1, \ldots, m$, there exists an $\tilde{f}^{i}_{\infty} \in W^{1,2}_{\text{loc}} (co (M_{\infty}) \setminus \iota_{\infty}^{-1} (\mathcal{I}))$, such that, after potentially picking a subsequence and renumerating,
\begin{equation}\label{eqn: weak W 1 2 convergence of tilde f k}
    \begin{cases}
        \tilde{f}_{k}^{i} \rightharpoonup \tilde{f}_{\infty}^{i}, & W^{1,2}_{\text{loc}} (co (M_{\infty}) \setminus \iota_{\infty}^{-1} (\mathcal{I})), \\
        \tilde{f}_{k}^{i} \rightarrow \tilde{f}_{\infty}^{i}, & L^{2}_{\text{loc}} (co (M_{\infty}) \setminus \iota_{\infty}^{-1} (\mathcal{I})), \\
    \end{cases}
\end{equation}
Note that by lower semicontinuity of the $W^{1,2}$ norm for (\ref{eqn: weak W 1 2 convergence of tilde f k}), for all open $\Omega \subset co (M_{\infty}) \setminus \iota_{\infty}^{-1} (\mathcal{I})$, we have a uniform bound $\|\tilde{f}_{\infty}^{i}\|_{W^{1,2} (\Omega)} \leq C$.
Therefore, we may deduce that in fact $\tilde{f}_{\infty}^{i} \in W^{1,2} (co (M_{\infty}))$.
Moreover, we have that 
\begin{equation*}
    \int_{co (M_{\infty})} (\tilde{f}_{\infty}^{i})^{2} \, (\omega_{\delta} \circ \iota_{\infty}) \leq 1.
\end{equation*}

\bigskip 

For $i = 1, \ldots, m$, and large enough $k$, we define the metric, $\tilde{g}^{i}_{k} \coloneqq (F^{i}_{k})^{*} g_{k}$, and its associated gradient $\tilde{\nabla}^{i}_{k}$, on $\Omega$. 
Let $J_{k}^{i}$ denote the Jacobian of the map $F_{k}^{i}$ with respect to the metric $g_{\infty}$ on $\Omega$. 
For a point $x_{0} \in \Omega$, we may choose $s > 0$, small enough so that $B_{s}^{co (M_{\infty})} (x_{0}) \subset \subset \Omega$,
\begin{equation*}
    F (B_{s}^{co (M_{\infty})} (x_{0}) \times (-\tau, \tau)) \subset B_{\text{inj} \, (N) / 2}^{N} (\iota (x_{0})).
\end{equation*}
Consider $\varphi \in C_{c}^{\infty} (B_{s}^{co (M_{\infty})} (x_{0}))$, and for each $i = 1, \ldots, m$ and $x \in B_{s}^{co (M_{\infty})} (x_{0})$, we define the function 
\begin{equation*}
    \varphi_{k}^{i} (F_{k}^{i} (x)) = \varphi (x),
\end{equation*}
on $C_{c}^{\infty} (F_{k}^{i} (B_{s}^{co (M_{\infty})} (x_{0}))) \subset C^{\infty} (co (M_{k}))$.
As each $M_{k}$ is properly embedded, by \cite[Lemma C.1]{CKM-MinimalHypersurfacesBoundedIndex} (cf. \cite{S-OrientabilityOfHypersurfaces}), 
\begin{equation*}
    \{ \iota_{k} (F_{k}^{i} (x)) \colon x \in B_{s}^{co (M_{\infty})} (x_{0}) \} \subset M_{k} \cap B_{\text{inj} \, (N) / 2}^{N} (\iota (x_{0}))
\end{equation*} 
is two-sided, and thus we can extend $\varphi_{k}^{i}$ to a vector field on $N$, and thus to a function in $C^{\infty} (co(M_{k}))^{-}$.
Thus we may plug $\varphi_{k}^{i}$ into (\ref{eqn: weak weighted eigenvalue equation}) and obtain, 
\begin{equation*}
    \int_{\Omega} g_{k}^{i} (\tilde{\nabla}^{i}_{k} \tilde{f}^{i}_{k}, \tilde{\nabla}^{i}_{k} \varphi ) J_{k}^{i} = \lambda_{k} \int_{\Omega} \tilde{f}_{k}^{i} \, \varphi \, (\omega_{k, \delta, R} \circ F_{k}^{i}) \, J_{k}^{i} + \int_{\Omega} ((|A_{k}|^{2} + R_{k}) \circ F_{k}^{i}) \, \tilde{f}_{k}^{i} \, \varphi \, J_{k}^{i}.
\end{equation*}
Hence, by (\ref{eqn: weak W 1 2 convergence of tilde f k}), convergence of $\omega_{k, \delta, R} \circ F_{k}^{i} \rightarrow \omega_{\delta} \circ \iota_{\infty}$, in $L^{\infty} (\Omega)$, and smooth convergence of, $F_{k}^{i} \rightarrow id$, on $\Omega$, we have that, 
\begin{equation}\label{eqn: tilde f infty weakly solves pde away from I}
    \int_{co (M_{\infty})} \nabla \tilde{f}^{i}_{\infty} \cdot \nabla \varphi - ((|A_{\infty}|^{2} + R_{\infty}) \circ \iota_{\infty}) \, \tilde{f}^{i}_{\infty} \, \varphi = \lambda_{\infty} \int_{co (M_{\infty})} \tilde{f}^{i}_{\infty} \, \varphi \, (\omega_{\delta} \circ \iota_{\infty}),
\end{equation}
holds for all $\varphi \in C_{c}^{\infty} (B_{s}^{co (M_{\infty})} (x_{0}))$. 
Thus by standard regularity theory for linear elliptic PDEs we have that $\tilde{f}^{i}_{\infty} \in W^{2,2} (\Omega)$ and $\Delta \tilde{f}^{i}_{\infty} + ((|A_{\infty}|^{2} + R_{\infty}) \circ \iota_{\infty}) \, \tilde{f}_{\infty}^{i} + \lambda_{\infty} \tilde{f}_{\infty}^{i} \, (\omega_{\delta} \circ \iota_{\infty}) = 0$ a.e. on $\Omega$. 
This then implies that (\ref{eqn: tilde f infty weakly solves pde away from I}) holds for all $\varphi \in C^{\infty}_{c} (co (M_{\infty}) \setminus \iota_{\infty}^{-1} (\mathcal{I}))$. 

\begin{proposition}\label{prop: u solves PDE away from I, then it solves it over I}
    Let $(M, g)$ be a compact, $n$-dimensional, Riemannian manifold, with $n \geq 3$. 
    Consider $V \in L^{\infty} (M)$, and $\omega \in L(n/2, \infty) (M)$. 
    Suppose that we have $u \in W^{1,2} (M)$, and a finite set of points $\mathcal{J} \subset M$, such that, for all $\varphi \in C^{\infty}_{c} (M \setminus \mathcal{J})$,
    \begin{equation}\label{eqn: u weakly solves away from I}
        \int_{M} \nabla u \cdot \nabla \varphi - V u \varphi - \omega u \varphi = 0,
    \end{equation}
    then in fact (\ref{eqn: u weakly solves away from I}) holds for all $\varphi \in W^{1,2} (M)$.
\end{proposition}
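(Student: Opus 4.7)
The plan is a standard capacity argument exploiting that, in dimension $n \geq 3$, finite point sets have zero $W^{1,2}$-capacity, combined with the Lorentz--Sobolev bound of Proposition \ref{prop: W 1 2 bounds on weighted L2 inner product} to control the $\omega$ term (for which an $L^\infty$ bound on $\omega$ is unavailable).

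First I would construct a family of cutoffs $\eta_\varepsilon \in C^\infty(M)$ with $0 \leq \eta_\varepsilon \leq 1$, identically zero on a neighborhood of $\mathcal{J}$, converging to $1$ pointwise a.e., and satisfying $\|\nabla \eta_\varepsilon\|_{L^2(M)} \to 0$ as $\varepsilon \to 0$. In normal coordinates around each $p \in \mathcal{J}$, take $\eta_\varepsilon \equiv 0$ on $B_\varepsilon(p)$, $\eta_\varepsilon \equiv 1$ outside $B_{2\varepsilon}(p)$, smoothly interpolating with $|\nabla \eta_\varepsilon| \lesssim \varepsilon^{-1}$. Since this annular region has volume $\lesssim \varepsilon^n$, we obtain $\int_M |\nabla \eta_\varepsilon|^2 \lesssim \varepsilon^{n-2} \to 0$ (this is precisely where $n \geq 3$ is used).

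Next, given any $\psi \in C^\infty(M)$, the product $\eta_\varepsilon \psi$ lies in $C^\infty_c(M \setminus \mathcal{J})$ and may be inserted into (\ref{eqn: u weakly solves away from I}), yielding
\begin{equation*}
\int_M \eta_\varepsilon \nabla u \cdot \nabla \psi + \int_M \psi \, \nabla u \cdot \nabla \eta_\varepsilon - \int_M V u \eta_\varepsilon \psi - \int_M \omega u \eta_\varepsilon \psi = 0.
\end{equation*}
I would send $\varepsilon \to 0$. The first and third terms converge by dominated convergence, using that $\nabla u \cdot \nabla \psi \in L^1$ and $V u \psi \in L^1$ (as $V \in L^\infty$ and $u, \psi \in L^2$). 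The fourth term converges by dominated convergence after noting, via Proposition \ref{prop: W 1 2 bounds on weighted L2 inner product}, that $\omega u \psi \in L^1(M)$. The error term containing $\nabla \eta_\varepsilon$ is controlled by Cauchy--Schwarz:
\begin{equation*}
\left| \int_M \psi \, \nabla u \cdot \nabla \eta_\varepsilon \right| \leq \|\psi\|_{L^\infty} \, \|\nabla u\|_{L^2} \, \|\nabla \eta_\varepsilon\|_{L^2} \longrightarrow 0.
\end{equation*}
Thus the identity (\ref{eqn: u weakly solves away from I}) holds for every $\psi \in C^\infty(M)$.

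Finally, I would extend to arbitrary $\varphi \in W^{1,2}(M)$ by approximation. Choose $\varphi_k \in C^\infty(M)$ with $\varphi_k \to \varphi$ in $W^{1,2}(M)$. The gradient term and the $V u \varphi_k$ term pass to the limit by Cauchy--Schwarz in $L^2$; the weighted term passes to the limit by direct application of Proposition \ref{prop: W 1 2 bounds on weighted L2 inner product}, which gives
\begin{equation*}
\left| \int_M u (\varphi - \varphi_k) \, \omega \right| \leq C \, \|\omega\|_{(n/2, \infty)} \, \|u\|_{W^{1,2}(M)} \, \|\varphi - \varphi_k\|_{W^{1,2}(M)} \longrightarrow 0.
\end{equation*}
The only genuine subtlety lies in this last step, where $\omega \in L(n/2, \infty)$ is not bounded and a naive H\"{o}lder estimate would fail; it is precisely the Lorentz--Sobolev framework codified in Proposition \ref{prop: W 1 2 bounds on weighted L2 inner product} that makes the argument go through.
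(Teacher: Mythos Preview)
Your proof is correct and follows essentially the same approach as the paper: a capacity cutoff exploiting $n \geq 3$, followed by density of $C^{\infty}(M)$ in $W^{1,2}(M)$, with Proposition~\ref{prop: W 1 2 bounds on weighted L2 inner product} invoked to handle the $\omega$ term. The only cosmetic difference is that the paper uses the cutoff $\chi_{\varepsilon} = 1 - \eta_{\varepsilon}$ and bounds all residual terms at once by $C\,\varepsilon^{(n-2)/2}$, whereas you pass to the limit term by term via dominated convergence; both are equivalent.
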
 

\begin{proof}
    Consider a smooth function on $\mathbb{R}$ with the following properties, 
    \begin{equation*}
        \begin{cases}
            \chi (t) = 1, & t \leq 1, \\
            \chi (t) = 0, & t \geq 2, \\
            -3 \leq \chi' (t) \leq 0.
        \end{cases}
    \end{equation*}
    Then for $\varepsilon > 0$, chosen small enough we define the following smooth function on $M$, 
    \begin{equation*}
        \chi_{\varepsilon} (x) = \chi \left( \frac{d_{\mathcal{J}} (x)}{\varepsilon} \right),
    \end{equation*}
    where $d_{\mathcal{J}} (x) = \text{dist}_{g} (x, \mathcal{J})$.
    Then, for $\varphi = \chi_{\varepsilon} \varphi + (1 - \chi_{\varepsilon}) \varphi \in C^{\infty} (M)$, by H\"{o}lder inequality, Proposition \ref{prop: W 1 2 bounds on weighted L2 inner product}, and small enough $\varepsilon > 0$,
    \begin{equation*}
        \left| \int_{M} \nabla u \cdot \nabla \varphi - V u \varphi - \omega u \varphi \right| \leq C \|\varphi\|_{C^{1} (M)} \|u\|_{W^{1,2} (M)} (1 + \|V\|_{L^{\infty} (M)} + \|\omega\|_{(n/2, \infty) (M)}) \varepsilon^{(n - 2) / 2}
    \end{equation*} 
    Thus we see that (\ref{eqn: u weakly solves away from I}) holds for all $\varphi \in C^{\infty} (M)$, and the Proposition may then be completed by a standard density argument.
\end{proof}

By Proposition \ref{prop: u solves PDE away from I, then it solves it over I} we have that (\ref{eqn: tilde f infty weakly solves pde away from I}) holds for all $\varphi \in W^{1,2} (co (M_{\infty}))$. 
Thus, 
\begin{equation*}
    \tilde{f}^{i}_{\infty} \in \mathcal{E}_{\omega_{\delta}} (\lambda_{\infty}; L_{\infty}, W^{1,2} (co (M_{\infty}))).
\end{equation*}
It is worth noting that we could have $\tilde{f}^{i}_{\infty} = 0$.

\subsection{Convergence on the Bubble}\label{sec: convergence on the bubble}

For $S > 0$ fixed, and $i \in \{ 1, \ldots, J \}$, consider the bubble $\Sigma_{k}^{i, S}$, and its associated point-scale sequence, $\{ (p_{k}^{i}, r_{k}^{i}) \}_{k \in \mathbb{N}}$.
Let $\{\partial_{1}, \ldots, \partial_{n + 1}\}$ be an orthonormal basis for $T_{p_{k}^{i}} N$, with respect to the metric $g$, and define the map, 
\begin{eqnarray*}
    G_{k}^{i} \colon \mathbb{R}^{n + 1} = \text{span} \{ \partial_{1}, \ldots, \partial_{n + 1} \} &\rightarrow& N, \\
    x &\mapsto& \exp_{p_{k}^{i}} (r_{k}^{i} x),
\end{eqnarray*}
then on $B_{2S}^{n + 1} (0)$, for large enough $k$ we define the metric, $\tilde{g}_{k} = (r_{k}^{i})^{-2} (G_{k}^{i})^{*} g$, and we have that, 
\begin{equation*}
    (\tilde{g}_{k})_{\alpha, \beta} (x) \coloneqq g_{\alpha, \beta} (r_{k}^{i} x) \rightarrow \delta_{\alpha, \beta},
\end{equation*}
and for our bubble,
\begin{equation*}
    \tilde{\Sigma}^{i, 2 S}_{k} \coloneqq \frac{1}{r_{k}^{i}} \exp_{p_{k}^{i}}^{-1}( \Sigma^{i, S}_{k} \cap B_{2 S r_{k}^{i}}^{N} (p_{k}^{i}) ) \rightarrow \Sigma^{i} \cap B_{2 S}^{n + 1} (0) \eqqcolon \Sigma^{i, 2 S},
\end{equation*}
smoothly. 
As $\Sigma^{i}$ is two-sided there is a choice of unit normal $\nu$, and a $\tau > 0$, such that the map, 
\begin{eqnarray*}
    F \colon \Sigma^{i, S} \times (-\tau, \tau) &\rightarrow& \mathbb{R}^{n + 1}, \\
    (x, t) &\mapsto& x + t \nu (x),
\end{eqnarray*}
is a diffeomorphism onto its image. 
Then for large enough $k$, there exists a smooth function, 
\begin{equation*}
    v_{k}^{i, S} \colon \Sigma^{i, S} \rightarrow (-\tau, \tau),
\end{equation*}
such that, 
\begin{equation*}
    \tilde{\Sigma}^{i, 2 S}_{k} \cap F(\Sigma^{i, S} \times (- \tau, \tau)) = \{ F(x, v_{k}^{i, S} (x)) \colon x \in \Sigma^{i, S} \}.
\end{equation*}
As before, for large enough $k$, define the smooth map, 
\begin{eqnarray*}
    F_{k}^{i, S} \colon \Sigma^{i, S} &\rightarrow& \mathbb{R}^{n + 1}, \\
    x &\mapsto& F(x, v_{k}^{i, S} (x)).
\end{eqnarray*}
From $\nu$, we get a choice of unit normal $\nu_{k}^{i}$ (which points in the $d F (\partial_{t})$ direction), to $\tilde{\Sigma}_{k}^{i, 2 S} \cap F (\Sigma^{i, S} \times (- \tau, \tau))$, with respect to $\tilde{g}_{k}$.

\bigskip 

If $M_{k}$ is one-sided, then we define the following functions on $\Sigma^{i, S}$ (recalling our functions $f_{k}$ from Section \ref{sec: convergence on the base}),
\begin{equation*}
    \tilde{f}_{k}^{i, S} (x) \coloneqq (r_{k}^{i})^{n / 2 - 1} f_{k} ((G_{k}^{i} \circ F_{k}^{i}) (x), (r_{k}^{i})^{-1} d G_{k}^{i} ( \nu_{k}^{i} (F_{k}^{i} (x)) ))
\end{equation*}
and, 
\begin{equation*}
    \omega_{k}^{\Sigma^{i}, R, S} (x) \coloneqq (r_{k}^{i})^{2} \omega_{k, \delta, R} ((G_{k}^{i} \circ F_{k}^{i}) (x), (r_{k}^{i})^{-1} d G_{k}^{i} (\nu_{k}^{i} (F_{k}^{i} (x)) )).
\end{equation*}
We note that while $\tilde{f}_{k}^{i, S}$ depends on our choice of unit normal $\nu$ to $\Sigma^{i}$, this dependence is only up to a choice in sign.

\bigskip 

If $M_{k}$ is two sided, the we define the following functions on $\Sigma^{i, S}$,
\begin{equation*}
    \tilde{f}_{k}^{i, S} (x) \coloneqq (r_{k}^{i})^{n / 2 - 1} f_{k} ((G_{k}^{i} \circ F_{k}^{i}) (x))
\end{equation*}
and, 
\begin{equation*}
    \omega_{k}^{\Sigma^{i}, R, S} (x) \coloneqq (r_{k}^{i})^{2} \omega_{k, \delta, R} ((G_{k}^{i} \circ F_{k}^{i}) (x)).
\end{equation*}
We then have that there exists a function $\omega^{\Sigma^{i}, R} \in W^{1, \infty} (\Sigma^{i})$, such that $\omega_{k}^{\Sigma^{i}, R, S} \rightarrow \omega^{\Sigma^{i}, R}$ in $L^{\infty} (\Sigma^{i, S})$, and moreover satisfies the following: there exists a $T < + \infty$, such that, $\essinf \omega^{\Sigma^{i}, R} > 0$ in $B_{T}^{n + 1} (0) \cap \Sigma^{i}$, and there exists a $\Lambda \geq 1$, such that, 
\begin{equation*}
    \frac{1}{\Lambda |x|^{2}} \leq \omega^{\Sigma^{i}, R} (x) \leq \frac{\Lambda}{|x|^{2}}
\end{equation*}
on $\Sigma^{i} \setminus B_{T}^{n + 1} (0)$.

\bigskip 

We have, for large $k$,
\begin{equation*}
    \int_{\Sigma^{i, S}} (\tilde{f}_{k}^{i, S})^{2} \omega_{k}^{\Sigma^{i}, R, S} \leq 2 \int_{co (M_{k})} f_{k}^{2} \omega_{k, \delta, R} = 2, 
\end{equation*}
implying that, for large enough $k$, 
\begin{equation*}
    \int_{\Sigma^{i, S}} (\tilde{f}_{k}^{i, S})^{2} \leq \frac{1}{\min_{\Sigma^{i, S}} \omega^{\Sigma^{i}, R}} \int_{\Sigma^{i, S}} (\tilde{f}_{k}^{i, S})^{2} \omega_{k, R}^{\Sigma^{i}, S} \leq C (S, R, \Sigma^{1}, \ldots, \Sigma^{J}) < + \infty.
\end{equation*}
Moreover, 
\begin{equation*}
    \int_{\Sigma^{i, S}} |\nabla \tilde{f}_{k}^{i, S}|^{2} \leq 2 \int_{co(M_{k})} |\nabla f_{k}|^{2} \leq C (N, g, \delta, R, M_{\infty}, \Sigma^{1}, \ldots, \Sigma^{J}) < + \infty.
\end{equation*}
Similar to previous, for $0 < S_{1} < S_{2} < + \infty$, and large enough $k$, we have that $\tilde{f}_{k}^{i, S_{1}} = \tilde{f}_{k}^{i, S_{2}}$ on $\Sigma^{i, S_{1}}$. 
Thus, for any open, bounded set $\Omega \subset \Sigma^{i}$, we may take any $S > 0$ such that $\Omega \subset \Sigma^{i, S}$, then for large enough $k$, the function $\tilde{f}_{k}^{i} = \tilde{f}_{k}^{i, S}$ is well defined on $\Omega$, with, 
\begin{equation*}
    \int_{\Omega} (\tilde{f}_{k}^{i})^{2} + \int_{\Omega} |\nabla \tilde{f}_{k}^{i}|^{2} \leq C (\Omega, N, g, \delta, R, M_{\infty}, \Sigma^{1}, \ldots, \Sigma^{J}).
\end{equation*}
We may conclude that there exists an $\tilde{f}_{\infty}^{i} \in W^{1,2}_{\text{loc}} (\Sigma^{i})$,  
\begin{equation*}
    \begin{cases}
        \tilde{f}_{k}^{i} \rightarrow \tilde{f}_{\infty}^{i}, & L^{2}_{\text{loc}} (\Sigma^{i}), \\
        \tilde{f}_{k}^{i} \rightharpoonup \tilde{f}_{\infty}^{i}, & W^{1, 2}_{\text{loc}} (\Sigma^{i}),
    \end{cases}
\end{equation*}
and, we have
\begin{equation*}
    \tilde{f}_{\infty}^{i} \in W^{1,2}_{\omega^{\Sigma^{i}, R}} (\Sigma^{i}) \coloneqq \left\{ f \in W^{1,2}_{\text{loc}} (\Sigma^{i}) \colon \int_{\Sigma^{i}} |\nabla f|^{2} < + \infty, \, \int_{\Sigma^{i}} \omega^{\Sigma^{i}, R} f^{2} < + \infty \right\}
\end{equation*}
Similar to before, and the fact that our metric on $B_{S}^{n + 1} (0)$ converges to the standard Euclidean one, we deduce that for all $\varphi \in C^{\infty}_{c} (\Sigma^{i})$, 
\begin{equation}\label{eqn: tilde f weakly solves pde on sigma}
    \int_{\Sigma^{i}} \nabla \tilde{f}_{\infty}^{i} \cdot \nabla \varphi - |A_{\Sigma^{i}}|^{2} \, \tilde{f}_{\infty}^{i} \, \varphi - \lambda_{\infty} \, \omega^{\Sigma^{i}, R} \, \tilde{f}_{\infty}^{i} \, \varphi = 0.
\end{equation}
As $\Sigma^{i}$ has finite total curvature, we may deduce, by Proposition \ref{prop: curvature estimate}, that there exists a $C = C (\Sigma^{i}) < + \infty$, such that, 
\begin{equation*}\label{eqn: omega curvature estimate on ends}
    |A_{\Sigma^{i}}|^{2} \leq C \omega^{\Sigma^{i}, R}.
\end{equation*}
Thus by H\"{o}lder's inequality we have, for $\varphi \in L^{2} (\Sigma^{i})$,
\begin{equation*}
    \left| \int_{\Sigma^{i}} |A_{\Sigma^{i}}|^{2} \, \tilde{f}_{\infty}^{i} \, \varphi + \omega^{\Sigma^{i}, R} \, \tilde{f}_{\infty}^{i} \, \varphi \right| \leq C \| \omega^{\Sigma^{i}, R} \|_{L^{\infty} (\Sigma^{i})}^{1/2} \, \| \varphi \|_{L^{2} (\Sigma^{i})}.
\end{equation*}
This allows us to apply a standard density argument to deduce that (\ref{eqn: tilde f weakly solves pde on sigma}) holds for all $\varphi \in W^{1,2} (\Sigma^{i})$.
Thus we have that 
\begin{equation*}
    \tilde{f}_{\infty}^{i} \in \mathcal{E}_{\omega^{\Sigma^{i}, R}} (\lambda_{\infty}; L_{\Sigma^{i}}, W^{1,2}_{\omega^{\Sigma^{i}, R}} (\Sigma^{i}), W^{1,2} (\Sigma^{i})).
\end{equation*}  
Again it is worth noting that we could have $\tilde{f}_{\infty}^{i} = 0$.

\begin{rem}\label{rem: L 2 star is contained in W dot 1 2}
    By the Michael--Simon--Sobolev inequality (\cite[Theorem 2.1]{MS-Michael-Simon-Sobolev-inequality}), one may deduce that $\tilde{f}_{\infty} \in L^{2^{*}} (\tilde{\Sigma})$, for $2^{*} = 2n / (n - 2)$.
\end{rem}

\subsection{Strict Stability of the Neck}\label{sec: Strict Stability of the neck}

For a Riemannian manifold $(M, g)$, we define $W_{0}^{1,2} (M, g)$ to be the closure of $C^{1}_{c} (M)$, with respect to the standard norm on $W^{1,2} (M, g)$.

\begin{lem}\label{lem: Strict stability of the neck}
    For $n \geq 3$, consider the cylinder $(A \times \mathbb{R}, g)$, where $A \subset \mathbb{R}^{n}$ is a non-empty, open set, and $g$ is a smooth Riemannian metric, such that there exists a constant $K \in [1, \infty)$ such that for $x \in A \times \mathbb{R}$, and $X \in \mathbb{R}^{n + 1}$,
    \begin{equation}\label{eqn: upper and lower bounds on metric on cylinder}
            \frac{1}{K} \langle \, X , \, X \, \rangle \leq g_{x} (X, X) \leq K \langle \, X \, , \, X \, \rangle,
    \end{equation} 
    where $\langle \, \cdot \, , \, \cdot \, \rangle$ is the standard metric on $\mathbb{R}^{n}$.
    Now suppose we have a smooth function
    \begin{equation*}
        u \colon A \rightarrow (- T, T),
    \end{equation*}
    such that $\| \nabla^{\mathbb{R}^{n}} u\|_{L^{\infty} (A)} \leq 1 / 2$ ($\nabla^{\mathbb{R}^{n}}$ denotes the gradient on $\mathbb{R}^{n}$ with respect to the standard Euclidean metric), and denote $M \coloneqq \text{graph}  \, (u) \subset A \times (- 2 \, T, 2 \, T)$.
    For fixed $W \in (0, \infty)$ suppose we have functions, $\omega \in L(n / 2, \infty) (M, g)$, and $V \in L^{\infty} (M)$, such that $\| \omega \|_{L (n / 2, \infty) (M, g)} \leq W$, and $\essinf \omega > 0$.
    Then, there exists an $\varepsilon = \varepsilon (n, K, W) > 0$, and $C = C (n, K) \in (0, + \infty)$ such that if $|V| \leq \varepsilon \omega$ on $M$, then
    \begin{equation*}
        0 < \frac{1}{C \, W}  \leq \inf \Biggl\{ \int_{M} |\nabla f|^{2} - V f^{2}  \colon f \in W_{0}^{1,2} (M), \, \int_{M} f^{2} \omega = 1 \Biggr\}.
    \end{equation*}
\end{lem}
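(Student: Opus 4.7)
The strategy is to reduce the problem to the Lorentz--Sobolev inequality on $\mathbb{R}^{n}$ (Proposition \ref{prop: Lorentz Sobolev inequality on R n}) via the graph parametrization $\Phi \colon A \to M$, $\Phi(x) = (x, u(x))$. Since $\| \nabla^{\mathbb{R}^{n}} u \|_{L^\infty} \leq 1/2$ and $g$ satisfies the two-sided bound (\ref{eqn: upper and lower bounds on metric on cylinder}) with constant $K$, the pullback metric $\Phi^{*} g$ on $A$ is comparable to the Euclidean metric on $A$, with constants depending only on $n$ and $K$. Hence the Jacobian of $\Phi$ is pinched between two positive constants depending only on $n, K$, and consequently the quantities $\int_M f^2 \omega \, dV_g$, $\int_M |\nabla_g f|^2 \, dV_g$, $\|\omega\|_{(n/2, \infty)(M,g)}$, and $\|f\|_{(2^*, 2)(M,g)}$ all transfer under $\tilde f \coloneqq f \circ \Phi$ (extended by zero to $\mathbb{R}^n$) to the corresponding Euclidean quantities on $\mathbb{R}^n$, up to multiplicative constants $C(n, K)$.

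The main estimate I would establish is the following weighted Poincar\'{e}-type inequality: for all $f \in W_{0}^{1,2}(M)$,
\begin{equation*}
    \int_{M} f^{2} \, \omega \, dV_{g} \leq C(n, K) \, W \int_{M} |\nabla_{g} f|^{2} \, dV_{g}. \quad (\star)
\end{equation*}
By density, it suffices to treat $f \in C^{1}_{c}(M)$, for which $\tilde{f} \in C^{0,1}_{c}(A) \subset W^{1,2}(\mathbb{R}^{n})$. To prove $(\star)$, apply the H\"{o}lder--Lorentz inequality (Proposition \ref{prop: Holder Lorentz inequality}) with pairs $(n/2, \infty)$ and $(n/(n-2), 1)$, and rewrite $\| f^{2} \|_{(n/(n-2), 1)} = \| f \|^{2}_{(2^{*}, 2)}$ using Proposition \ref{prop: Lorentz norm of function raised to a power}, to obtain
\begin{equation*}
    \int_{M} f^{2} \omega \leq \| \omega \|_{(n/2, \infty)(M)} \, \| f \|^{2}_{(2^{*}, 2)(M)} \leq W \cdot C(n, K) \, \| \tilde{f} \|^{2}_{(2^{*}, 2)(\mathbb{R}^{n})}.
\end{equation*}
Then the Lorentz--Sobolev inequality of Proposition \ref{prop: Lorentz Sobolev inequality on R n} with $p = 2$ (and the extension to $W^{1,2}(\mathbb{R}^n)$ noted in the subsequent Remark) gives $\|\tilde{f}\|_{(2^{*}, 2)(\mathbb{R}^{n})} \leq C(n) \| \nabla^{\mathbb{R}^{n}} \tilde{f} \|_{L^{2}(\mathbb{R}^{n})} \leq C(n, K) \|\nabla_g f\|_{L^{2}(M)}$, which combined with the previous display yields $(\star)$.

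Finally, using $|V| \leq \varepsilon \omega$ on $M$ and the normalization $\int_{M} f^{2} \omega = 1$,
\begin{equation*}
    \int_{M} |\nabla f|^{2} - V f^{2} \geq \int_{M} |\nabla f|^{2} - \varepsilon \int_{M} \omega f^{2} = \int_{M} |\nabla f|^{2} - \varepsilon \geq \frac{1}{C(n, K) \, W} - \varepsilon,
\end{equation*}
where the last inequality comes from $(\star)$ applied under the normalization. Choosing $\varepsilon = \varepsilon(n, K, W) \coloneqq \frac{1}{2 C(n, K) W}$ yields the desired lower bound $\frac{1}{2 C(n, K) W}$, which is of the form $\frac{1}{C \, W}$ with $C = 2 C(n, K)$ depending only on $n$ and $K$.

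The only non-routine step is verifying the comparability of the Lorentz $(2^{*}, 2)$-norm under the bi-Lipschitz identification $\Phi \colon A \to M$. I would handle this by tracking the distribution function $\alpha_{f}$ and the decreasing rearrangement $f^{*}$ under the change of measure: pinching of the Jacobian of $\Phi$ gives $C(K)^{-1} \alpha_{\tilde{f}}(s) \leq \alpha_{f}(s) \leq C(K) \alpha_{\tilde{f}}(s)$, from which the comparability $\|f\|_{(p, q)(M)} \sim_{n,K} \|\tilde{f}\|_{(p, q)(\mathbb{R}^{n})}$ follows directly from the definition of the Lorentz norm. Everything else is a direct application of the Lorentz--Sobolev and H\"{o}lder--Lorentz machinery recalled in the preliminaries.
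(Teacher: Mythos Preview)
Your proposal is correct and follows essentially the same route as the paper: both pull back to $A$ via the graph map, use metric/Jacobian comparability (with constants depending only on $n,K$), apply the H\"older--Lorentz inequality with exponents $(n/2,\infty)$ and $(2^*/2,1)$ together with Proposition \ref{prop: Lorentz norm of function raised to a power}, invoke the Euclidean Lorentz--Sobolev inequality, and then choose $\varepsilon = (2CW)^{-1}$. The only cosmetic difference is that the paper transfers everything to $(A,\langle\cdot,\cdot\rangle)$ before applying H\"older--Lorentz, whereas you apply it on $(M,g)$ and transfer the $(2^*,2)$-norm afterwards; both are equivalent once the distribution-function comparability you note is established.
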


\begin{proof} 
    We define the following map, $F (x) \coloneqq (x, u (x))$, and the metric $\tilde{g} = F^{*} g$ on $A$.
    For $x \in A$, and $X \in \mathbb{R}^{n}$, we have, 
    \begin{eqnarray*}
        \tilde{g}_{x} (X, X) \leq K \langle X, X \rangle + 2 K \|\nabla^{\mathbb{R}^{n}} u\|_{L^{\infty} (A)} \langle X, X \rangle + K \|\nabla^{\mathbb{R}^{n}} u\|_{L^{\infty} (A)}^{2} \langle X, X \rangle = C \langle X, X \rangle,
    \end{eqnarray*}
    with $C = C (n, K)$, which from here on may be rechosen at each step.
    Thus, for $x \in A$, and $f \in C^{1} (M)$,
    \begin{eqnarray*}
        |\nabla^{\mathbb{R}^{n}} (f \circ F) (x)| = \sup_{\langle X, X \rangle \leq 1} |d (f \circ F) (x) (X)| \leq \sup_{\tilde{g}_{x} (X, X) \leq C} |d (f \circ F) (x) (X)| \leq C |\nabla^{\tilde{g}} (f \circ F)| (x).
    \end{eqnarray*}   
    Moreover, if we consider the metric on $A$, $g_{1} = F^{*} \langle \cdot, \cdot \rangle$, then $g_{1} (X, X) \leq K \tilde{g} (X, X) \leq C \langle X, X \rangle$, and thus
    \begin{eqnarray*}
        1 \leq 1 + |\nabla^{\mathbb{R}^{n}} u (x)|^{2} = |g_{1}| \leq K^{n} |\tilde{g}| \leq C.
    \end{eqnarray*}
    Now, take an $f \in C^{1}_{c} (M)$, that satisfies, 
    \begin{equation*}
        \int_{M} f^{2} \, \omega \, d \mu = 1,    
    \end{equation*}
    where $\mu$ denotes the volume measure of $(M, g)$.
    We have, by the above, and applying Propositions \ref{prop: Holder Lorentz inequality}, \ref{prop: Lorentz norm of function raised to a power} and \ref{prop: Lorentz Sobolev inequality on R n}, for large enough $k$ (see \cite[Theorem 1.1]{V-NoteOnGeneralizedHardySobolevInequality} for a similar computation)
    \begin{eqnarray*}
        1 &=& \int_{A} (f \circ F)^{2} \, (\omega \circ F) \sqrt{|\tilde{g}|} \, dx, \\
        &\leq& C \int_{A} (f \circ F)^{2} \, (\omega \circ F) \, dx, \\
        &\leq& C \|\omega \circ F\|_{(n/2, \infty) (A, \langle \, \cdot \, , \, \cdot \, \rangle)} \|f \circ F\|_{(2^{*}, 2) (A, \langle \, \cdot \, , \, \cdot \, \rangle)}^{2}, \\
        &\leq& C \|\omega \circ F\|_{(n/2, \infty) (A, \langle \, \cdot \, , \, \cdot \, \rangle)} \int_{A} |\nabla^{\mathbb{R}^{n}} (f \circ F)|^{2} \, dx, \\
        &\leq& C \|\omega \circ F\|_{(n/2, \infty) (A, \langle \, \cdot \, , \, \cdot \, \rangle)} \int_{M} |\nabla f|^{2} - V f^{2} \, d \mu + C \|\omega \circ F\|_{(n/2, \infty) (A, \langle \, \cdot \, , \, \cdot \, \rangle)} \int_{M} V f^{2} \, d \mu, \\
        &\leq& C \|\omega \circ F\|_{(n/2, \infty) (A, \langle \, \cdot \, , \, \cdot \, \rangle)} \left( \varepsilon + \int_{M} |\nabla f|^{2} - V f^{2} \, d \mu \right),
    \end{eqnarray*}
    were again we are potentially rechoosing $C = C (n, K)$ at each line.
    Moreover, 
    \begin{equation*}
        \| \omega \circ F \|_{(n / 2, \infty) (A, \langle \, \cdot \, , \, \cdot \, \rangle)} \leq K \| \omega \|_{(n / 2, \infty) (M, g)} \leq K \, W,
    \end{equation*}
    and thus, again rechoosing $C = C (n , K)$, and choosing $\varepsilon = (2 C W)^{-1}$,
    \begin{equation*}
        \int_{M} |\nabla f|^{2} - V f^{2} \geq \frac{1}{2 \, C \, W} > 0,
    \end{equation*}
    for all $f \in C_{c}^{1} (M)$.
    Now the Lemma may be concluded by a standard density argument.
\end{proof}

As previously, for ease of notation we only consider the case of minimal hypersurfaces, however the argument for $H$-CMC hypersurfaces is identical.

\bigskip 

In Sections \ref{sec: convergence on the base} and \ref{sec: convergence on the bubble} we showed that if we have a sequence, 
\begin{equation*}
    f_{k} \in \mathcal{E}_{\omega_{k, \delta, R}} (\lambda_{k}; L_{k}, W^{1,2} (co (M_{k}))^{-}),
\end{equation*}
with $\lambda_{k} \leq 0$, and
\begin{equation*}
    \int_{co (M_{k})} f_{k}^{2} \, \omega_{k, \delta, R} = 1,
\end{equation*}
for all $k$, then after potentially taking a subsequence and renumerating we have that $\lambda_{k} \rightarrow \lambda_{\infty} \leq 0$, and, 
\begin{equation*}
    f_{k} \rightarrow ((f_{\infty}^{1}, \ldots, f_{\infty}^{m}), f_{\infty}^{\Sigma^{1}}, \ldots, f_{\infty}^{\Sigma^{J}}), 
\end{equation*}
where, for $i = 1, \ldots, m$, $f_{\infty}^{i} \in W^{1,2} (co (M_{\infty}) )$, and for $j = 1, \ldots, J$, $f_{\infty}^{\Sigma^{j}} \in W_{\omega^{\Sigma^{j}, R}}^{1,2} (\Sigma^{j}) \subset L^{2^{*}} (\Sigma^{j})$. 
By this convergence we mean, for $i = 1, \ldots, m$ (recalling notation from Section \ref{sec: convergence on the base}), $\tilde{f}_{k}^{i} = (f_{k} \circ F_{k}^{i})$,
\begin{equation}
    \begin{cases}
        \tilde{f}_{k}^{i} \rightharpoonup f_{\infty}^{i}, & W^{1,2}_{\text{loc}} (co (M_{\infty}) \setminus \iota^{-1}_{\infty} (\mathcal{I}) )), \\
        \tilde{f}_{k}^{i} \rightarrow f_{\infty}^{i}, & L^{2}_{\text{loc}} (co (M_{\infty}) \setminus \iota^{-1}_{\infty} (\mathcal{I}))),
    \end{cases}
\end{equation}
and for $j = 1, \ldots, J$, setting $\tilde{f}_{k}^{\Sigma^{j}} = \tilde{f}_{k}^{j}$ (this time $\tilde{f}_{k}^{j}$ as defined in Section \ref{sec: convergence on the bubble}),
\begin{equation}
    \begin{cases}
        \tilde{f}_{k}^{\Sigma^{j}} \rightharpoonup f_{\infty}^{\Sigma^{j}}, & W^{1,2}_{\text{loc}} (\Sigma^{j}), \\
        \tilde{f}_{k}^{\Sigma^{j}} \rightarrow f_{\infty}^{\Sigma^{j}}, & L^{2}_{\text{loc}} (\Sigma^{j}).
    \end{cases}
\end{equation}
Furthermore, we are able to deduce that for $i = 1, \ldots, m$, 
\begin{equation*}
    f_{\infty}^{i} \in \mathcal{E}_{\omega_{\delta}} (\lambda_{\infty}; L_{\infty}, W^{1,2} (co (M_{\infty}) )),
\end{equation*}
and $j = 1, \ldots, J$,
\begin{equation*}
    f_{\infty}^{\Sigma^{j}} \in \mathcal{E}_{\omega^{\Sigma^{j}, R}} (\lambda_{\infty}; L_{\Sigma^{j}}, W^{1,2}_{\omega^{\Sigma^{j}, R}} (\Sigma^{j}), W^{1,2} (\Sigma^{j})).
\end{equation*}

\begin{claim}\label{claim: we cannot converge to zero}
    If $\{ f_{1,k}, \ldots, f_{b,k}\}$, is an orthonormal collection, with respect to the $\omega_{k, \delta, R}$-weighted $L^{2}$ norm, of $\omega_{k, \delta, R}$-weighted eigenfunctions, with non-positive eigenvalues, and $\sum_{i = 1} a_{i}^{2} = 1$, then, 
    \begin{equation*}
        h_{k} \coloneqq \sum_{i = 1}^{b} a_{i} f_{i,k} \rightarrow \sum_{i = 1}^{b} a_{i} ( (f_{i,\infty}^{1}, \ldots, f_{i, \infty}^{m}), f_{i, \infty}^{\Sigma^{1}}, \ldots, f_{i, \infty}^{\Sigma^{J}}) \not= ( (0, \ldots, 0), 0, \ldots, 0).
    \end{equation*}
\end{claim}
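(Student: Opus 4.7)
The plan is a contradiction argument. Suppose $h_{k} \to ((0, \ldots, 0), 0, \ldots, 0)$ in the sense described. The weighted $L^{2}$-orthonormality of the $f_{i, k}$ gives $\int_{co(M_{k})} h_{k}^{2} \omega_{k, \delta, R} = 1$, while the weighted eigenvalue equations combined with $\lambda_{i, k} \leq 0$ yield $B_{k}[h_{k}, h_{k}] = \sum_{i} a_{i}^{2} \lambda_{i, k} \leq 0$. The idea is to show that the weighted $L^{2}$-mass of $h_{k}$ must then concentrate on the neck region of the bubble convergence, where Lemma~\ref{lem: Strict stability of the neck} forces the neck contribution to $B_{k}[h_{k}, h_{k}]$ to be bounded below by a positive constant, a contradiction. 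To establish concentration, fix $\delta$ small and $R$ large. On the outer base $M_{k} \cap (N \setminus \bigcup_{y \in \mathcal{I}} B_{2\delta}^{N}(y))$ the weight is bounded by $(2\delta)^{-2}$ and $\tilde{h}_{k}^{i} \to 0$ strongly in $L^{2}_{\text{loc}}$ away from $\iota_{\infty}^{-1}(\mathcal{I})$ (by Section~\ref{sec: convergence on the base}), so $\int h_{k}^{2} \omega_{k, \delta, R} \to 0$ there. On each bubble region the rescaled weight is uniformly bounded and $\tilde{h}_{k}^{\Sigma^{j}} \to 0$ strongly in $L^{2}_{\text{loc}}(\Sigma^{j})$ (by Section~\ref{sec: convergence on the bubble}), yielding analogous vanishing. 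Consequently, the neck $\mathcal{N}_{k} := M_{k} \cap \bigcup_{y} B_{2\delta}^{N}(y) \setminus \bigcup_{j} B_{2 R r_{k}^{j}}^{N}(p_{k}^{j})$ carries weighted mass $1 - o(1)$.

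Next I localize using a smooth partition of unity $\chi_{b}^{2} + \chi_{n}^{2} + \chi_{bu}^{2} = 1$ on $co(M_{k})$, with $\chi_{b}$ supported off $\bigcup_{y} B_{\delta}^{N}(y)$, $\chi_{bu}$ supported on $\bigcup_{j} B_{R r_{k}^{j}}^{N}(p_{k}^{j})$ (identically $1$ on $\bigcup_{j} B_{(R/2) r_{k}^{j}}^{N}(p_{k}^{j})$), and $\chi_{n}$ supported inside $\mathcal{N}_{k}$. The IMS-type identity
\begin{equation*}
    B_{k}[h_{k}, h_{k}] = \sum_{\star \in \{b, n, bu\}} B_{k}[\chi_{\star} h_{k}, \chi_{\star} h_{k}] - \int_{co(M_{k})} h_{k}^{2} \sum_{\star} |\nabla \chi_{\star}|^{2}
\end{equation*}
splits the bilinear form. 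For $\chi \in \{\chi_{b}, \chi_{bu}\}$, the identity $B_{k}[\chi h_{k}, \chi h_{k}] = B_{k}[h_{k}, \chi^{2} h_{k}] + \int h_{k}^{2} |\nabla \chi|^{2}$ together with $B_{k}[h_{k}, \chi^{2} h_{k}] = \sum_{i} a_{i} \lambda_{i, k} \int f_{i, k} \chi^{2} h_{k} \omega_{k, \delta, R}$ and Cauchy--Schwarz produces $|B_{k}[h_{k}, \chi^{2} h_{k}]| \leq C (\int \chi^{4} h_{k}^{2} \omega_{k, \delta, R})^{1/2} = o(1)$. The gradient errors $\int h_{k}^{2} |\nabla \chi_{\star}|^{2}$ vanish by direct estimates: $|\nabla \chi_{b}|$ lives on an annulus away from $\mathcal{I}$ where strong $L^{2}$ convergence applies, while $|\nabla \chi_{bu}|$ lives on a scale-invariant annulus that, after rescaling, becomes a bounded annulus in $\Sigma^{j}$ where $\tilde{h}_{k}^{\Sigma^{j}} \to 0$ strongly in $L^{2}$.

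Finally, on $\mathrm{supp}(\chi_{n}) \subset \mathcal{N}_{k}$, Remark~\ref{rem: neck region can be written as a graph} writes each connected component as a graph with arbitrarily small $C^{1}$ norm over an open set of $T_{y} M_{\infty} \cong \mathbb{R}^{n}$ in the chart $\exp_{y}$, in which the ambient metric is uniformly bi-Lipschitz to Euclidean. Point~(\ref{pt: neck condition in bubble convergence}) of Definition~\ref{def: bubble convergence} and Proposition~\ref{prop: curvature estimate} give $d_{k}(x) |A_{k}|(x) \to 0$ uniformly on the neck, while $\omega_{k, \delta, R} \geq d_{k}(x)^{-2}$ yields $|A_{k}|^{2} \leq \varepsilon \omega_{k, \delta, R}$ there; the boundedness of Ricci together with $\omega_{k, \delta, R} \geq \delta^{-2}$ gives $|R_{k}| \leq \varepsilon \omega_{k, \delta, R}$ for small $\delta$. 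Claim~\ref{claim: uniform L n over 2 infinity bounf on weight} provides a uniform $L(n/2, \infty)$-bound $W$ on $\omega_{k, \delta, R}$, so Lemma~\ref{lem: Strict stability of the neck} applied componentwise to $\chi_{n} h_{k}$ yields $B_{k}[\chi_{n} h_{k}, \chi_{n} h_{k}] \geq (1 - o(1))/(C W)$. Combining, $B_{k}[h_{k}, h_{k}] \geq 1/(2 C W) > 0$ for large $k$, contradicting $B_{k}[h_{k}, h_{k}] \leq 0$. The main obstacle is controlling the base and bubble contributions and gradient error terms without a strong $W^{1,2}$ limit for $h_{k}$; the IMS-style rewriting combined with the weighted eigenvalue equation sidesteps this difficulty.
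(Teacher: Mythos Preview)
Your argument is correct and follows the same overall contradiction strategy as the paper: assume $h_k \to 0$ on base and bubbles, force the weighted $L^2$-mass onto the neck, and invoke Lemma~\ref{lem: Strict stability of the neck} to contradict $B_k[h_k,h_k]\le 0$. The route you take to localize, however, is genuinely different and worth noting.

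The paper defines a single cutoff $H_k$ of $h_k$ to the neck and then must control $\int |\nabla h_k|^2 - |\nabla H_k|^2$. For this it upgrades the a priori weak $W^{1,2}_{\mathrm{loc}}$ convergence on the base and on each bubble to strong $W^{1,2}$ convergence, via interior elliptic $W^{2,2}$ estimates applied to the equation $\Delta h_k + (|A_k|^2+R_k)h_k = -P_k\,\omega_{k,\delta,R}$. Only after this upgrade can it bound the gradient errors (the terms $\mathrm{I},\mathrm{II},\mathrm{IV}^j,\mathrm{V}^j$) and the mass loss $\int \omega(h_k^2-H_k^2)$ by an arbitrarily small $\zeta$, and then pigeonhole onto one neck component.

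You instead use an IMS partition $\chi_b^2+\chi_n^2+\chi_{bu}^2=1$ and the identity $B_k[\chi h_k,\chi h_k]=B_k[h_k,\chi^2 h_k]+\int h_k^2|\nabla\chi|^2$, feeding the \emph{weighted eigenvalue equation} into $B_k[h_k,\chi^2 h_k]=\sum_i a_i\lambda_{i,k}\langle f_{i,k},\chi^2 h_k\rangle_\omega$. This reduces the control of the base and bubble pieces of $B_k$ to weighted $L^2$ smallness of $h_k$ on $\mathrm{supp}\,\chi_b$ and $\mathrm{supp}\,\chi_{bu}$, which you already have from the hypothesis; no $W^{2,2}$ estimate or strong $W^{1,2}$ convergence is needed. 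The gradient-of-cutoff errors live on fixed (or scale-invariant) annuli where the same $L^2$ smallness applies, so they are $o(1)$ as well. Your approach is therefore slightly more economical; the paper's route has the minor advantage that the strong $W^{1,2}$ convergence it establishes is of independent interest and reused elsewhere.

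Two small points to tighten: (i) the parameters defining the neck should be chosen \emph{inside} the already-fixed $(\delta,R)$ of the weight, i.e.\ take outer radius $r_0\in(0,\delta/4)$ and inner scale $R_0\ge 4R$ as in Remark~\ref{rem: neck region can be written as a graph}, rather than reusing $(\delta,R)$ themselves; this is what makes $|R_k|\le\varepsilon\,\omega_{k,\delta,R}$ available on the neck (since there $\omega\ge r_0^{-2}$) without having to shrink the original $\delta$. (ii) To ensure $\chi^2 h_k\in W^{1,2}(co(M_k))^{-}$ is an admissible test function in the one-sided case, build $\chi_b,\chi_{bu}$ from ambient distance functions on $N$ so that they pull back to even functions on $co(M_k)$; this is implicit in your construction but worth stating.
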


\begin{proof}
This claim is proven by a contradiction argument similar to that in \cite[Claim 1 of Lemma \rom{4}.6]{DGR-morse-index-stability}.

\bigskip 

By Remark \ref{rem: neck region can be written as a graph}, for all $\eta \in (0,1/2]$, $\tau \in (0, 1)$, and $y \in \mathcal{I}$, there exists an $r_{0} \in (0, \delta / 4)$, and $R_{0} \in (4 R, \infty)$, such that, taking geodesic normal coordinates about $y \in N$, such that $T_{y} M_{\infty} = \{ x_{n + 1} = 0\}$, for large enough $k$, if we denote $C_{k}$ to be a connected component of
\begin{equation*}
    \exp_{y}^{-1} (M_{k} \cap B_{r_{0}}^{N} (y) \setminus \cup_{j = 1}^{J} \overline{\Sigma_{k}^{j, R_{0}}}) \subset \mathbb{R}^{n + 1},
\end{equation*}
then there exists a non-empty open set $A (C_{k}) \subset \{ x_{n + 1} = 0\}$, and a smooth function,
\begin{equation*}
    u_{k} \colon A(C_{k}) \rightarrow (- \tau, \tau),
\end{equation*}
such that, $C_{k} = \text{graph} \, (u_{k})$, and $\| \nabla^{\mathbb{R}^{n}} u_{k} \|_{L^{\infty}} \leq \eta$. 
Moreover, for each $\varepsilon > 0$, we may make further choices of $r_{0}$ and $R_{0}$, such that for large enough $k$, on each $C_{k}$, if we denote $V_{k} = (|A_{k}|^{2} + R_{k})_{|C_{k}} \in L^{\infty} (C_{k})$, and $\omega_{k} = (\omega_{k, \delta, R})_{|C_{k}} \in L (n/ 2, \infty) (C_{k}, g) \cap L^{\infty} (C_{k})$, then
\begin{equation*}
    \|\omega_{k}\|_{L (n/ 2, \infty) (C_{k}, g)} \leq W, \hspace{1cm} \text{and} \hspace{1cm} |V_{k}| \leq \varepsilon \omega_{k},  
\end{equation*}
with $W = W (N, g, M_{\infty}, m, \delta, J) < + \infty$ (coming from Claim \ref{claim: uniform L n over 2 infinity bounf on weight}).
Thus, fixing $\varepsilon = \varepsilon (N, g, M_{\infty}, m, \delta, J) > 0$ small enough, by Lemma \ref{lem: Strict stability of the neck}, for large enough $k$
\begin{equation}\label{eqn: strict stability of C k}
        \inf \Biggl\{ \int_{C_{k}} |\nabla f|^{2} - V_{k} f^{2}  \colon f \in W_{0}^{1,2} (C_{k}), \, \int_{C_{k}} f^{2} \omega_{k} = 1 \Biggr\} \geq \gamma > 0,
\end{equation}
with $\gamma = \gamma (N, g, M_{\infty}, m, \delta, J) > 0$.
Now assuming Claim \ref{claim: we cannot converge to zero} does not hold, we will provide a contradiction to (\ref{eqn: strict stability of C k}), on at least one connected component $C_{k}$ of $M_{k} \cap (\cup_{y \in \mathcal{I}} B_{r_{0}}^{N} (y) \setminus \cup_{j = 1}^{J} \overline{\Sigma_{k}^{j, R_{0}}})$.
Note that there is a uniform bound, $m |\mathcal{I}| < + \infty$, on the number of such connected components.

\bigskip 

Consider the following smooth cutoff, 
\begin{equation*}
    \begin{cases}
        \chi \in C^{\infty} (\mathbb{R}; [0,1]), \\
        \chi(t) = 1, & t \in (-\infty,1], \\
        \chi(t) = 0, & t \in [2, \infty), \\
        -3 \leq \chi'(t) \leq 0,
    \end{cases}
\end{equation*}
and the following distance functions defined on $co (M_{k})$, 
\begin{equation*}
    d_{k} (x) = \text{dist}_{g}^{N} (\iota_{k} (x), \mathcal{I}),
\end{equation*}
and for $j = 1, \ldots, J$, 
\begin{equation*}
    d_{k}^{j} (x) = \text{dist}_{g}^{N} (\iota_{k} (x), p_{k}^{j}).
\end{equation*}
We then define the following function 
\begin{equation*}
    H_{k} (x) = \begin{cases}
        0, & x \in co (M_{k}) \setminus \cup_{y \in \mathcal{I}} \iota_{k}^{-1} (B^{N}_{r_{0}} (y)), \\
        \chi (2 r_{0}^{-1} d_{k} (x)) h_{k}, & x \in \iota_{k}^{-1} (\cup_{y \in \mathcal{I}} B^{N}_{r_{0}} (y) \setminus \cup_{j = 1}^{J} \Sigma_{k}^{j, 2 R_{0}}), \\
        h_{k} (1 - \chi ((R_{0} r_{k}^{j})^{-1} d_{k}^{j} (x))), & x \in \iota_{k}^{-1} (\Sigma_{k}^{j,2 R_{0}}), \, j = 1, \ldots, J.
    \end{cases}
\end{equation*}
We compute the gradient of $H_{k}$. 
For $x \in co (M_{k}) \setminus \cup_{y \in \mathcal{I}} \iota_{k}^{-1} (B^{N}_{r_{0}} (y))$, or $x \in \iota_{k}^{-1} (\Sigma_{k}^{j, R_{0}})$, $j = 1, \ldots, J$, we have, $\nabla H_{k} = 0$. 
For $x \in co (M_{k}) \cap\iota_{k}^{-1} (B^{N}_{r_{0}} (y) \setminus \cup_{j = 1}^{J} \Sigma_{k}^{j, 2 R_{0}})$, we have, 
\begin{equation*}
    \nabla H_{k} (x) = \nabla h_{k} (x) \chi (2 r_{0}^{-1} d_{k} (x)) + h_{k} (x) 2 r_{0}^{-1} \chi' (2 r_{0}^{-1} d_{k} (x)) \nabla d_{k} (x).
\end{equation*}
Finally, for $x \in \iota_{k}^{-1} (\Sigma_{k}^{j, 2 R_{0}} \setminus \Sigma_{k}^{j, R_{0}})$, $j = 1, \ldots, J$, we have, 
\begin{equation*}
    \nabla H_{k} (x) = \nabla h_{k} (x) (1 - \chi ((R r_{k}^{j})^{- 1} d_{k}^{j} (x))) - h_{k} (x) (R_{0} r_{k}^{j})^{-1} \chi' ((R_{0} r_{k}^{j})^{-1} d_{k}^{j} (x)) \nabla d_{k}^{j} (x). 
\end{equation*}
Noting that $H_{k}^{2} \leq h_{k}^{2}$, we have
\begin{equation}\label{eqn: difference of B k f k to B k F k}
    |B_{k} (h_{k}, h_{k}) - B_{k} (H_{k}, H_{k})| \leq \left| \int_{co (M_{k})} |\nabla h_{k}|^{2} - |\nabla H_{k}|^{2} \right| + C \int_{co (M_{k})} \omega_{k, \delta, R} (h_{k}^{2} - H_{k}^{2}),
\end{equation}
with $C = C (N, g, M_{\infty}, m, \Sigma^{1}, \ldots, \Sigma^{J}, \delta, R) < + \infty$.
We split the first term on the right hand side of (\ref{eqn: difference of B k f k to B k F k}) into separate domains,
\begin{equation}
    \left| \int_{co (M_{k})} |\nabla h_{k}|^{2} - |\nabla H_{k}|^{2} \right| \leq \rom{1} + \rom{2} + \sum_{y \in \mathcal{I}} \rom{3}^{y} + \sum_{j = 1}^{J} (\rom{4}^{j} + \rom{5}^{j}),
\end{equation}
where, 
\begin{equation*}
    \rom{1} = \int_{co (M_{k}) \setminus \cup_{y \in \mathcal{I}} \iota_{k}^{-1} (B_{r_{0}}^{N} (y))} |\nabla h_{k}|^{2}, 
\end{equation*}

\begin{eqnarray*}
    \rom{2} &\leq& \int_{\cup_{y \in \mathcal{I}} \iota_{k}^{-1} (B^{N}_{r_{0}} (y) \setminus B^{N}_{r_{0}/2} (y)) } |\nabla h_{k}|^{2} + 12 r_{0}^{-1} |h_{k}| \, |\nabla h_{k}| + 36 r_{0}^{-2} h_{k}^{2}, \\
    &\leq& C \int_{\cup_{y \in \mathcal{I}} \iota_{k}^{-1} (B^{N}_{r_{0}} (y) \setminus B^{N}_{r_{0}/2} (y)) } |\nabla h_{k}|^{2} + r_{0}^{-2} h_{k}^{2}
\end{eqnarray*}

For $y \in \mathcal{I}$, 

\begin{equation*}
    \rom{3}^{y} = \left| \int_{\iota_{k}^{-1} (B^{N}_{r_{0} / 2} (y) \setminus ( \cup_{j = 1}^{J} \Sigma_{k}^{j, 2 R_{0}}))} |\nabla h_{k}|^{2} - |\nabla H_{k}|^{2} \right| = 0,
\end{equation*}

and $j = 1, \ldots, J$, 

\begin{eqnarray*}
    \rom{4}^{j} &\leq& \int_{\iota_{k}^{-1} (\Sigma_{k}^{j, 2 R_{0}} \setminus \Sigma_{k}^{j, R_{0}})} |\nabla h_{k}|^{2} + 6 (R_{0} r_{k}^{j})^{-1} |h_{k}| \, |\nabla h_{k}| + 9 (R_{0} r_{k}^{j})^{-2} h_{k}^{2}, \\
    &\leq& C \left( \int_{\iota_{k}^{-1} (\Sigma_{k}^{j, 2 R_{0}} \setminus \Sigma_{k}^{j, R_{0}})} |\nabla h_{k}|^{2} + (R_{0} r_{k}^{j})^{-2} h_{k}^{2} \right)
\end{eqnarray*}

\begin{equation*}
    \rom{5}^{j} = \int_{\iota_{k}^{-1} (\Sigma_{k}^{j, R_{0}})} |\nabla h_{k}|^{2}.
\end{equation*}

Along our sequence we have (in a weak sense), 
\begin{equation*}
    \Delta h_{k} + (|A_{k}|^{2} + R_{k}) \, h_{k} = - \left(\sum_{i = 1}^{b} a_{i} \lambda_{i, k} f_{i, k} \right) \omega_{k, \delta, R} = - P_{k} \, \omega_{k, \delta, R}, 
\end{equation*}
and we may note that, 
\begin{equation*}
    \|h_{k}\|_{L^{2} (co (M_{k}))} + \|P_{k}\|_{L^{2} (co (M_{k}))} \leq C \sum_{i = 1}^{b} \|f_{i, k}\|_{L^{2} (co (M_{k}))} \leq C,
\end{equation*}
with $C = C (N, g, M_{\infty}, m, \Sigma^{1}, \ldots, \Sigma^{J}, \delta, R) < + \infty$.
Recalling notation from Section \ref{sec: convergence on the base}, for each $l = 1, \ldots, m$, we denote, 
\begin{equation*}
    \tilde{h}_{k}^{l} \coloneqq \sum_{i = 1}^{b} a_{i} \tilde{f}_{i, k}^{l}, \hspace{0.5cm} \text{and}, \hspace{0.5cm} \tilde{P}_{k}^{l} \coloneqq \sum_{i = 1}^{b} a_{i} \lambda_{i,k} \tilde{f}_{i,k}^{l},
\end{equation*} 
and then by standard interior estimates, 
\begin{equation*}
    \|\tilde{h}_{k}^{l}\|_{W^{2,2} (\Omega_{r_{0} / 4})} \leq \tilde{C}  (\|\tilde{h}_{k}^{l}\|_{L^{2} (\Omega_{r_{0} / 8})} + r_{0}^{-2} \|\tilde{P}_{k}^{l}\|_{L^{2} (\Omega_{r_{0} / 8})}) \leq \tilde{C}, 
\end{equation*}
for $\tilde{C} = \tilde{C} (N, g, M_{\infty}, m, \Sigma^{1}, \ldots, \Sigma^{J}, \delta, R, r_{0}) < + \infty$.
Thus, by our assumption that
\begin{equation*}
    h_{k} \rightarrow ((0, \ldots, 0), 0 \ldots, 0),
\end{equation*}
after potentially taking a subsequence and renumerating we have, 
\begin{equation*}
    \begin{cases}
        \tilde{h}_{k}^{l} \rightharpoonup 0, & \text{weakly} \, W^{2,2} (\Omega_{r_{0} / 4}), \\
        \tilde{h}_{k}^{l} \rightarrow 0, & \text{strongly} \, W^{1,2} (\Omega_{r_{0} / 4}).
    \end{cases}
\end{equation*}
By identical arguments, for each $j = 1, \ldots, J$ denoting, 
\begin{equation*}
    \tilde{h}_{k}^{\Sigma^{j}} = \sum_{i = 1}^{b} a_{i} \tilde{f}_{i, k}^{\Sigma^{j}},
\end{equation*}
we have that, after potentially taking a subsequence and renumerating, 
\begin{equation*}
    \begin{cases}
        \tilde{h}_{k}^{\Sigma^{j}} \rightharpoonup 0, & \text{weakly} \, W^{2,2} (\Sigma^{j, 4 R_{0}}), \\
        \tilde{h}_{k}^{\Sigma^{j}} \rightarrow 0, & \text{strongly} \, W^{1,2} (\Sigma^{j, 4 R_{0}}).
    \end{cases}
\end{equation*}
Therefore, we have that for all $\zeta > 0$, and then large enough $k$,
\begin{equation*}
    \begin{split}
    \left| \int_{co (M_{k})} |\nabla h_{k}|^{2} - |\nabla H_{k}|^{2} \right| \leq \hat{C} \biggl( & \sum_{l = 1}^{m} \|\tilde{h}_{k}^{l}\|_{W^{1,2} (\Omega_{r_{0} / 4})}^{2} + \sum_{j = 1}^{J} \|\tilde{h}^{\Sigma^{j}}_{k}\|^{2}_{W^{1,2} (\Sigma^{j, 4 R_{0}})} \biggr) < \zeta,
    \end{split}
\end{equation*}
with $\hat{C} = \hat{C} (N, g, M_{\infty}, m, \Sigma^{1}, \ldots, \Sigma^{J}, \delta, R, r_{0}, R_{0}) < + \infty$.
Similarly we have, 
\begin{equation*}
    \int_{co (M_{k})} \omega_{k, \delta, R} (h_{k}^{2} - H_{k}^{2}) \leq \hat{C} \left( \sum_{l = 1}^{m} \| \tilde{h}_{k} \|_{L^{2} (\Omega_{r_{0} / 4})}^{2} + \sum_{j = 1}^{J} \| \tilde{h}^{\Sigma^{j}}_{k} \|_{L^{2} (\Sigma^{j, 4 R_{0}})}^{2} \right) < \zeta.
\end{equation*}
Thus, for large $k$, there exists a connected component, 
\begin{equation*}
    C_{k} \subset M_{k} \cap (\cup_{y \in \mathcal{I}} B_{r_{0}}^{N} (y) \setminus (\cup_{j = 1}^{J} \Sigma_{k}^{j, R_{0}} ) ),
\end{equation*}
such that, denoting $\tilde{H}_{k} = (H_{k})_{|\iota_{k}^{-1} (C_{k})}$, we have that $\tilde{H}_{k} \in W^{1,2}_{0} (\iota_{k}^{-1} (C_{k}))$, and 
\begin{eqnarray*}
    \int_{\iota_{k}^{-1} (C_{k})} \tilde{H}_{k}^{2} \, \omega_{k, \delta, R} &\geq& \frac{1 - \zeta}{m |\mathcal{I}|}, \\
    B_{k} [\tilde{H}_{k}, \tilde{H}_{k}] &<& \zeta.
\end{eqnarray*}
Thus, choosing 
\begin{equation*}
    \zeta < \frac{\gamma}{2 (2 m |\mathcal{I}| + \gamma)},
\end{equation*}
we derive a contradiction to (\ref{eqn: strict stability of C k}) on $C_{k}$.
\end{proof}

\section{Equivalence of Weighted and Unweighted Eigenspaces}\label{app: equivalence of weighted and unweighted Espaces}

\begin{proposition}\label{prop: equivalence of weighted and unweighted eigenspaces}
    Let $(M, g)$ be a compact Riemannian manifold of dimension $n \geq 3$, with $V \in L^{\infty} (M)$, $\omega \in L(n/2, \infty) (M, g)$, and $\essinf \, \omega > 0$. 
    Define the elliptic operator, 
    \begin{equation*}
        L \coloneqq \Delta + V.
    \end{equation*}
    Then, 
    \begin{eqnarray}
        \label{eqn: span of eigenvectors is a direct sum} \text{span} \, \left\{ \cup_{\lambda \leq 0} \, \mathcal{E} (\lambda; L, W^{1,2} (M)) \right\} &=& \oplus_{\lambda \leq 0} \, \mathcal{E} (\lambda; L, W^{1,2} (M)), \\
        \label{eqn: span of weighted eigenvectors is a direct sum} \text{span} \, \left\{ \cup_{\lambda \leq 0} \, \mathcal{E}_{\omega} (\lambda; L, W^{1,2} (M)) \right\} &=& \oplus_{\lambda \leq 0} \, \mathcal{E}_{\omega} (\lambda; L, W^{1,2} (M)),
    \end{eqnarray}
    and
    \begin{equation*}
        \text{dim} \, \left( \text{span} \, \left\{ \cup_{\lambda \leq 0} \, \mathcal{E}_{\omega} (\lambda; L, W^{1,2} (M)) \right\} \right) = \text{dim} \, \left( \text{span} \, \left\{ \cup_{\lambda \leq 0} \, \mathcal{E} (\lambda; L, W^{1,2} (M)) \right\} \right).
    \end{equation*}
\end{proposition}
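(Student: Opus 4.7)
The direct-sum assertions \eqref{eqn: span of eigenvectors is a direct sum} and \eqref{eqn: span of weighted eigenvectors is a direct sum} follow from the standard orthogonality of eigenvectors associated to distinct eigenvalues. If $f \in \mathcal{E}(\lambda_1; L, W^{1,2}(M))$ and $g \in \mathcal{E}(\lambda_2; L, W^{1,2}(M))$ with $\lambda_1 \neq \lambda_2$, then testing the weak eigenvalue equations against one another yields $\lambda_1 \int_M fg = B_L[f,g] = \lambda_2 \int_M fg$, so $\int_M fg = 0$; the same computation with $\omega\, d\mu$ in place of $d\mu$ (a non-degenerate inner product because $\essinf \omega > 0$) handles the weighted case. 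Setting
\[
N := \sup\bigl\{\dim V : V \subset W^{1,2}(M),\; B_L[f, f] \leq 0 \text{ for all } f \in V\bigr\},
\]
the same orthogonality also yields $\dim\bigl(\oplus_{\lambda \leq 0}\mathcal{E}_\omega(\lambda; L, W^{1,2}(M))\bigr) \leq N$ and its unweighted analogue, since for $f = \sum f_i$ with $f_i \in \mathcal{E}_\omega(\lambda_i; L)$, $\lambda_i \leq 0$ distinct, one has $B_L[f,f] = \sum \lambda_i \|f_i\|_{L^2_\omega}^2 \leq 0$.

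For the matching lower bound I would recast the weighted problem as a spectral problem for a bounded self-adjoint operator on a Hilbert space. Fix $K > \|V\|_{L^\infty}/\essinf \omega$ and equip $W^{1,2}(M)$ with the inner product $\langle f, h\rangle_K := B_L[f,h] + K \int_M fh\,\omega$. Proposition \ref{prop: W 1 2 bounds on weighted L2 inner product} (upper bound) and $\essinf \omega > 0$ (lower bound) together show that $\langle \cdot, \cdot\rangle_K$ is equivalent to the standard $W^{1,2}$ inner product. The operator $A \colon W^{1,2}(M) \to W^{1,2}(M)$ defined via Riesz representation by $\langle Af, h\rangle_K = \int_M fh\, \omega$ is then bounded, self-adjoint, and positive, and a short computation shows that $f \in \mathcal{E}_\omega(\lambda; L, W^{1,2}(M))$ if and only if $Af = (\lambda + K)^{-1} f$. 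Hence non-positive weighted eigenvalues of $L$ correspond bijectively to eigenvalues of $A$ lying in $[K^{-1}, \|A\|]$; the unweighted case reduces to the classical spectral theorem for $L$ on compact $M$.

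The crucial technical step, which I expect to be the main obstacle, is to show $\sigma_{\mathrm{ess}}(A) \subset [0, K^{-1})$. Given a Weyl sequence $\{f_n\} \subset W^{1,2}(M)$ with $f_n \rightharpoonup 0$, $\langle f_n, f_n\rangle_K = 1$, and $\|(A-\mu)f_n\|_K \to 0$, Rellich's theorem gives $f_n \to 0$ in $L^2(M)$, so $B_L[f_n,f_n] = \|\nabla f_n\|_{L^2}^2 + o(1) \to g_0 \geq 0$ and $\mu = \lim \int f_n^2 \omega$, subject to the constraint $g_0 + K\mu = 1$. Proposition \ref{prop: W 1 2 bounds on weighted L2 inner product} applied to $f_n$ gives $\mu \leq C \|\omega\|_{(n/2,\infty)}\, g_0$, and combining with $g_0 = 1 - K\mu$ yields
\[
\mu \leq \frac{C\|\omega\|_{(n/2,\infty)}}{1 + CK\|\omega\|_{(n/2,\infty)}} < \frac{1}{K}.
\]
Standard spectral theory for bounded self-adjoint operators then forces $\sigma(A) \cap [K^{-1}, \|A\|]$ to consist of finitely many isolated eigenvalues of finite multiplicity, and the Courant--Fischer min-max principle together with the correspondence $\mu_k(A) = (\lambda^\omega_k + K)^{-1}$ identifies both
\[
\dim\Bigl(\oplus_{\lambda \leq 0} \mathcal{E}_\omega(\lambda; L, W^{1,2}(M))\Bigr) \quad \text{and} \quad \dim\Bigl(\oplus_{\lambda \leq 0} \mathcal{E}(\lambda; L, W^{1,2}(M))\Bigr)
\]
with $N$, completing the proof. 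The $L(n/2, \infty)$ hypothesis on $\omega$ is essential here: via the critical Lorentz--Sobolev embedding encoded in Proposition \ref{prop: W 1 2 bounds on weighted L2 inner product}, it is precisely strong enough to preclude concentration sequences from putting essential spectrum at or above $K^{-1}$, even when $\omega$ is unbounded.
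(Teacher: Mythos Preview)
Your argument is correct and takes a genuinely different route from the paper's. Both proofs establish the direct-sum decompositions by the same orthogonality computation and both identify the common dimension with
\[
N = \sup\{\dim \Pi : \Pi \leq W^{1,2}(M),\ B_L|_{\Pi} \leq 0\},
\]
but the reverse inequality is handled very differently. The paper argues directly and variationally: assuming $\dim(\oplus_{\lambda\leq 0}\mathcal{E}_\omega) = b < N$, it takes a $(b+1)$-dimensional $\tilde\Pi$ with $B_L|_{\tilde\Pi}\leq 0$, projects onto $\Pi = \mathrm{span}\{f_1,\dots,f_b\}$ to find $v \in \tilde\Pi \cap \Pi^{\perp_\omega}$, and then minimises $B_L$ over the unit $\omega$-sphere in $\Pi^{\perp_\omega}$. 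Weak $W^{1,2}$-compactness and Rellich give a limit $\tilde f$; a short case analysis on $\alpha = \langle P_{\Pi^{\perp_\omega}}\tilde f, P_{\Pi^{\perp_\omega}}\tilde f\rangle_\omega \in \{0\}\cup(0,1)\cup\{1\}$ then produces a $(b+1)$-st weighted eigenfunction, a contradiction. Your approach instead packages the problem spectrally: the Riesz operator $A$ on $(W^{1,2},\langle\cdot,\cdot\rangle_K)$ is bounded self-adjoint, the Weyl criterion combined with Proposition~\ref{prop: W 1 2 bounds on weighted L2 inner product} pins $\sigma_{\mathrm{ess}}(A)$ strictly below $K^{-1}$, and Courant--Fischer reads off the count.

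Each approach has its merits. The paper's argument is entirely elementary, using nothing beyond weak compactness, Rellich, and a bare-hands minimisation, at the cost of a somewhat ad hoc trichotomy at the end. Your spectral argument is more conceptual and makes transparent exactly why the $L(n/2,\infty)$ hypothesis on $\omega$ is the right one: via the Lorentz--Sobolev inequality it is precisely the threshold condition that prevents concentrating Weyl sequences from pushing essential spectrum up to $K^{-1}$, and it immediately yields finiteness of the non-positive spectrum as a by-product. The trade-off is that you import the Weyl criterion and min-max for bounded self-adjoint operators, whereas the paper stays within direct variational methods.
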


\begin{proof}
    First we note by Proposition \ref{prop: W 1 2 bounds on weighted L2 inner product}, and fact that $\essinf \omega > 0$, 
    \begin{equation*}
        \langle f_{1}, f_{2} \rangle_{\omega} \coloneqq \int_{M} f_{1} \, f_{2} \, \omega,
    \end{equation*}
    is a well-defined inner product on $W^{1,2} (M)$.
    Consider the bilinear form on $f_{1}, \, f_{2} \in W^{1,2} (M)$, corresponding to our elliptic operator $L$, 
    \begin{equation*}
        B_{L} [f_{1}, f_{2}] \coloneqq \int_{M} \nabla f_{1} \cdot \nabla f_{2} - V f_{1} f_{2}.
    \end{equation*}
    If $f_{i} \in \mathcal{E}_{\omega} (\lambda_{i}; L, W^{1,2} (M))$, for $i = 1, \, 2$, with $\lambda_{1} \not= \lambda_{2}$, we have, 
    \begin{equation*}
        \lambda_{1} \langle f_{1}, f_{2} \rangle_{\omega} = B_{L} [f_{1}, f_{2}] = \lambda_{2} \langle f_{1}, f_{2} \rangle_{\omega}.
    \end{equation*}
    Thus, $\langle f_{1}, f_{2} \rangle_{\omega} = 0$, and so (\ref{eqn: span of weighted eigenvectors is a direct sum}) follows.
    An identical argument will also conclude (\ref{eqn: span of eigenvectors is a direct sum}).

    \bigskip 

    We now show that, 
    \begin{eqnarray*}
        \text{dim} \, (\oplus_{\lambda \leq 0} \, \mathcal{E}(\lambda; L, W^{1,2} (M))) &=& \sup \{ \text{dim} \, \Pi \colon \Pi \leq W^{1,2} (M), \text{ is a linear space such that, }  (B_{L})_{| \Pi} \leq 0\} \\ 
        &=& \text{dim} \, (\oplus_{\lambda \leq 0} \, \mathcal{E}_{\omega} (\lambda; L, W^{1,2} (M))).
    \end{eqnarray*}
    Indeed, we begin by showing that, 
    \begin{equation}\label{eqn: ind plus nul of weighted is less than max dim subspace that bi-form is non-pos}
        \begin{split}
        \text{dim} \, (\oplus_{\lambda \leq 0} \, \mathcal{E}_{\omega}(\lambda; L, W^{1,2} (M))) \leq \sup \{ \text{dim} \, \Pi \colon & \Pi \leq W^{1,2} (M), \\
        & \text{ is linear space such that, } (B_{L})_{| \Pi} \leq 0\}.
        \end{split}
    \end{equation}
    If the left-hand side is equal to $0$ then the inequality is trivial.
    Thus, take $b \in \mathbb{Z}_{\geq 1}$, such that
    \begin{equation*}
        b \leq \text{dim} \, (\oplus_{\lambda \leq 0} \, \mathcal{E}_{\omega}(\lambda; L, W^{1,2} (M))),
    \end{equation*}
    then we have $b$, $\omega$-weighted eigenvectors, 
    \begin{equation*}
        \{ f_{1}, \ldots, f_{b} \} \subset W^{1,2} (M),
    \end{equation*}
    with respective non-positive eigenvalues $\{\lambda_{1}, \ldots, \lambda_{b}\}$.
    By the argument at the begining of the proof, we may take the set $\{ f_{1}, \ldots, f_{b} \}$, to be orthonormal with respect to $\langle \, \cdot \, , \, \cdot \, \rangle_{\omega}$.
    Thus, 
    \begin{equation}\label{eqn: Pi equals span of f 1 to f m}
        \Pi = \text{span} \{ f_{1}, \ldots, f_{b} \},
    \end{equation}
    is a $b$-dimensional vector space, and for $a_{1}, \ldots, a_{b} \in \mathbb{R}$, we have, 
    \begin{eqnarray*}
        B_{L} \left[ \sum_{i} a_{i} f_{i}, \sum_{j} a_{j} f_{j} \right] = \sum_{i,j} a_{i} a_{j} \lambda_{i} \langle f_{i}, f_{j} \rangle_{\omega} = \sum_{i} \lambda_{i} a_{i}^{2} \leq 0.
    \end{eqnarray*}
    Thus, (\ref{eqn: ind plus nul of weighted is less than max dim subspace that bi-form is non-pos}) holds. 
    Identical argument shows that same inequality holds for unweighted eigenspaces.

    \bigskip 

    We look to show reverse inequality of (\ref{eqn: ind plus nul of weighted is less than max dim subspace that bi-form is non-pos}). 
    As we know that (\ref{eqn: ind plus nul of weighted is less than max dim subspace that bi-form is non-pos}) holds, if the left-hand side of (\ref{eqn: ind plus nul of weighted is less than max dim subspace that bi-form is non-pos}) is unbounded, then equality holds trivially. 
    Thus, we consider, $b \in \mathbb{Z}_{\geq 0}$,
    \begin{equation*}
        b = \text{dim} \, (\oplus_{\lambda \leq 0} \, \mathcal{E}(\lambda; L, W^{1,2} (M))) < \infty,
    \end{equation*}
    and prove reverse of (\ref{eqn: ind plus nul of weighted is less than max dim subspace that bi-form is non-pos}) by contradiction. 
    Indeed, assume that we have a linear subspace $\tilde{\Pi} \leq W^{1,2} (M)$, of dimension $b + 1$, such that $B_{L}$ is non-positive on $\tilde{\Pi}$.
    Consider the $b$ dimensional linear subspace $\Pi$, identically defined as (\ref{eqn: Pi equals span of f 1 to f m}).
    Note that, 
    \begin{equation*}
        W^{1,2} (M) = \Pi \oplus \Pi^{\perp_{\omega}}.
    \end{equation*}
    The projection map $P_{\Pi} \colon \tilde{\Pi} \rightarrow \Pi$, must have a non-trivial kernel, implying that there exists a $v \in \tilde{\Pi} \cap \Pi^{\perp_{\omega}}$, with $\langle v, v \rangle_{\omega} = 1$. 
    Thus,
    \begin{equation*}
        \tilde{\lambda} = \inf \{ B_{L} [f,f] \colon f \in \Pi^{\perp_{\omega}}, \, \langle f, f \rangle_{\omega} = 1 \} \leq 0.
    \end{equation*}
    Take $\tilde{f}_{k} \in \Pi^{\perp_{\omega}}$, $\langle \tilde{f}_{k}, \tilde{f}_{k} \rangle_{\omega} = 1$, such that 
    \begin{equation*}
        \tilde{\lambda} = \lim_{k \rightarrow \infty} B_{L} [\tilde{f}_{k}, \tilde{f}_{k}].
    \end{equation*}
    Thus, noting that $\essinf \, \omega > 0$, and $|V| < + \infty$, we may deduce a lower bound on
    \begin{equation*}
        \tilde{\lambda} \geq - \frac{\| V \|_{L^{\infty} (M)}}{\essinf \, \omega},
    \end{equation*}
    and uniform $W^{1,2} (M)$ bounds for $\{\tilde{f}_{k}\}$, and conclude that there exists an $\tilde{f} \in W^{1,2} (M)$, such that after potentially taking a subsequence and renumerating,
    \begin{equation*}
        \begin{cases}
            \tilde{f}_{k} \rightharpoonup \tilde{f}, \, in \, W^{1,2} (M), \\
            \tilde{f}_{k} \rightarrow \tilde{f}, \, in \, L^{2} (M).
        \end{cases}
    \end{equation*}
    Denote, 
    \begin{eqnarray*}
        h_{1} &\coloneqq& P_{\Pi} (\tilde{f}), \\
        h_{2} &\coloneqq& \tilde{f} - h_{1} \in \Pi^{\perp_{\omega}}.
    \end{eqnarray*}
    There exists $a_{1}, \ldots, a_{b} \in \mathbb{R}$, such that, 
    \begin{equation*}
        h_{1} = \sum_{i = 1}^{b} a_{i} f_{i}.
    \end{equation*}
    Then, as $\tilde{f}_{k} \in \Pi^{\perp_{\omega}}$,
    \begin{eqnarray*}
        B_{L} [\tilde{f}_{k}, \tilde{f}] = \sum_{i = 1}^{m} a_{i} \lambda_{i} \langle \tilde{f}_{k}, f_{i} \rangle_{\omega} + B_{L} [\tilde{f}_{k}, h_{2}] = B_{L} [\tilde{f}_{k}, h_{2}].
    \end{eqnarray*}
    Thus, by weak convergence,
    \begin{eqnarray*}
        B_{L} [\tilde{f}, \tilde{f}] = \lim_{k \rightarrow \infty} B_{L} [\tilde{f}_{k}, \tilde{f}] = \lim_{k \rightarrow \infty} B_{L} [\tilde{f}_{k}, h_{2}] = B_{L} [h_{1} + h_{2}, h_{2}] &=& \sum_{i = 1}^{m} a_{i} \lambda_{i} \langle f_{i}, h_{2} \rangle_{\omega} + B_{L} [h_{2}, h_{2}] \\ 
        &=& B_{L} [h_{2}, h_{2}].
    \end{eqnarray*}
    After potentially taking a further subsequence and renumerating so that, $\tilde{f}_{k} \rightarrow \tilde{f}$ pointwise a.e., by Fatou's Lemma we have the following, 
    \begin{equation*}
        0 \leq \alpha = \langle h_{2}, h_{2} \rangle_{\omega} \leq \langle h_{2}, h_{2} \rangle_{\omega} + \langle h_{1}, h_{1} \rangle_{\omega} = \langle \tilde{f}, \tilde{f} \rangle_{\omega} \leq \liminf_{k \rightarrow \infty} \langle \tilde{f}_{k}, \tilde{f}_{k} \rangle_{\omega} = 1, 
    \end{equation*}
    and we reduce our argument to three cases. 
    
    \bigskip

    First consider, $\alpha = 0$. 
    Thus, $h_{2} = 0$, and by lower semicontinuity of the $W^{1,2} (M)$-norm under weak convergence, 
    \begin{equation*}
        0 = B_{L} [h_{2}, h_{2}] = B_{L} [\tilde{f}, \tilde{f}] \leq \lim_{k \rightarrow \infty} B_{L} [\tilde{f}_{k}, \tilde{f}_{k}] = \tilde{\lambda} \leq 0.
    \end{equation*}
    Therefore, $\tilde{\lambda} = 0$, and recalling the function $v \in \tilde{\Pi} \cap \Pi^{\perp_{\omega}}$, with $\langle v, v \rangle_{\omega} = 1$, we must have
    \begin{equation*}
        B_{L} [v, v] = \inf \{ B_{L} [f,f] \colon f \in \Pi^{\perp_{\omega}}, \, \langle f, f \rangle_{\omega} = 1 \} = 0. 
    \end{equation*}
    Standard variational arguments then show that $v \in \mathcal{E}_{\omega} (0; L, W^{1,2} (M))$, which contradicts the definition $b$. 

    \bigskip 

    If $\alpha = 1$, then, 
    \begin{equation*}
        B_{L} [h_{2}, h_{2}] = \inf \{ B_{L} [f,f] \colon f \in \Pi^{\perp_{\omega}}, \, \langle f, f \rangle_{\omega} = 1 \} = \tilde{\lambda}.
    \end{equation*}
    Again, standard variational arguments then show that $h_{2} \in \mathcal{E}_{\omega} (\tilde{\lambda}; L, W^{1,2} (M) )$, which contradicts the definition of $b$.

    \bigskip 

    The final case is $0 < \alpha < 1$. 
    Note that if $B_{L} [h_{2}, h_{2}] = 0$, then we may apply a similar argument to that in case $\alpha = 0$. 
    Therefore, we may assume that $B_{L} [h_{2}, h_{2}] < 0$.
    Define, 
    \begin{equation*}
        h \coloneqq \alpha^{-1/2} h_{2}.
    \end{equation*} 
    Thus,  
    \begin{equation*}
        B_{L} [h, h] = \alpha^{-1} B_{L} [h_{2}, h_{2}] < B_{L} [h_{2}, h_{2}] \leq \tilde{\lambda}.
    \end{equation*}
    However, as $h \in \Pi^{\perp_{\omega}}$, and $\langle h, h \rangle_{\omega} = 1$, this is a contradiction. 
\end{proof}

\begin{rem}\label{rem: equivalence of weighted and unweighted index and nullity along M k s}
    For an embedded hypersurface $M \subset N$, the method of proof in Proposition \ref{prop: equivalence of weighted and unweighted eigenspaces} may be applied to show that, 
    \begin{equation*}
        \text{span} \, \left\{ \cup_{\lambda \leq 0} \, \mathcal{E}_{\omega} (\lambda; L, W^{1,2} (co (M))^{-}) \right\} = \oplus_{\lambda \leq 0} \, \mathcal{E}_{\omega} (\lambda; L, W^{1,2} (co(M))^{-} ),
    \end{equation*}
    and,
    \begin{equation*}
        \text{dim} \, (\oplus_{\lambda \leq 0} \, \mathcal{E}_{\omega_{k, \delta, R}} (\lambda; L, W^{1,2} (co (M))^{-} )) = \text{dim} \, (\oplus_{\lambda \leq 0} \, \mathcal{E} (\lambda; L, W^{1,2} (co (M))^{-} )).
    \end{equation*}
    When reapplying this method the two major things to note is that $W^{1,2} (co (M))^{-}$ is a linear space, and that when applying Rellich--Kondrachov, we have that the limit will also lie $W^{1,2} (co (M))^{-}$.
\end{rem}

\bigskip 

Let $(M, g)$ be a complete but not necessarily compact Riemannian manifold. 
Recall the following function space, 
\begin{equation*}
    W_{\omega}^{1,2} (M) \coloneqq \{ f \in L_{\text{loc}}^{1} (M) \colon |\nabla f| \in L^{2} (M), \, \text{and} \,  f^{2} \omega \in L^{1} (M) \}.
\end{equation*} 

\begin{proposition}\label{prop: equiv of espace for non-compact manifold}
    Let $\Sigma$ be a, connected, complete $n$-dimensional manifold, $n \geq 3$, and $\iota: \Sigma \rightarrow \mathbb{R}^{n + 1}$ be a two-sided, proper, minimal immersion, with finite total curvature,
    \begin{equation*}
        \int_{\Sigma} |A_{\Sigma}|^{n} < + \infty,
    \end{equation*}
    and Euclidean volume growth at infinity. 
    Consider a function $\omega \in L^{\infty} (\Sigma)$, such that there exists $\Lambda \in [1, \infty)$, and $R \in (0, \infty)$, such that $\essinf \omega > 0$ on $\Sigma \cap \iota^{-1} (\overline{B_{R}^{n + 1} (0)})$, and 
    \begin{equation*}
        \frac{1}{\Lambda |\iota (x)|^{2}} \leq \omega (x) \leq \frac{\Lambda}{|\iota (x)|^{2}},
    \end{equation*}
    for $x \in \Sigma \setminus \iota^{-1} (B_{R}^{n + 1} (0))$.
    Then
    \begin{equation}\label{eqn: span is equal to direct sum in unbounded manifold}
        \text{span} \, \{ \cup_{\lambda \leq 0} \, \mathcal{E}_{\omega} (\lambda; L_{\Sigma}, W_{\omega}^{1,2} (\Sigma), W^{1,2} (\Sigma)) \} = \oplus_{\lambda \leq 0} \, \mathcal{E}_{\omega} (\lambda; L_{\Sigma}, W_{\omega}^{1,2} (\Sigma), W^{1,2} (\Sigma)),
    \end{equation}
    and,
    \begin{eqnarray*}
        \text{dim} \, (\oplus_{\lambda < 0} \, \mathcal{E}_{\omega} (\lambda; L_{\Sigma}, W_{\omega}^{1,2} (\Sigma), W^{1,2} (\Sigma))) &=& \text{anl-ind} \, (\Sigma) \\ &\coloneqq& \lim_{S \rightarrow \infty} \text{anl-ind}_{\iota^{-1} (B_{S}^{n + 1} (0))} \, (\Sigma) \\
        &\coloneqq& \lim_{S \rightarrow \infty} \text{dim} \, (\oplus_{\lambda < 0} \, \mathcal{E} (\lambda; L_{\Sigma}, W_{0}^{1,2} (\Sigma \cap \iota^{-1} (B_{S}^{n + 1} (0)))) 
    \end{eqnarray*}
\end{proposition}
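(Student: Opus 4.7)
The plan is to prove the two assertions in parallel by combining a variational characterization of the weighted index (mirroring Proposition \ref{prop: equivalence of weighted and unweighted eigenspaces}) with a compact exhaustion argument to relate the weighted problem on $\Sigma$ to the analytic index on $\Sigma \cap \iota^{-1}(B_S^{n+1}(0))$ as $S \to \infty$. The main tool throughout is a smooth radial cutoff $\chi_R$ satisfying $\chi_R = 1$ on $\iota^{-1}(B_R^{n+1}(0))$, $\chi_R = 0$ outside $\iota^{-1}(B_{2R}^{n+1}(0))$, and $|\nabla \chi_R| \leq C/R$, together with the weight decay $\omega(x) \asymp |\iota(x)|^{-2}$ at infinity and the curvature bound $|A_\Sigma|^2 \leq C\omega$ (the latter follows by combining Proposition \ref{prop: curvature estimate} applied on the ends of $\Sigma$, which have finite total curvature by assumption, with the boundedness of $|A_\Sigma|^2$ and $\essinf \omega > 0$ on the compact part).

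For (\ref{eqn: span is equal to direct sum in unbounded manifold}), I first establish $\omega$-orthogonality of weighted eigenfunctions for distinct eigenvalues. Since $f_1, f_2 \in W^{1,2}_\omega(\Sigma)$ need not lie in the admissible test-function space $W^{1,2}(\Sigma)$, the compact argument does not apply directly. Instead, I plug the compactly supported test function $\chi_R f_2 \in W^{1,2}(\Sigma)$ into the weak equation for $f_1$, and symmetrically plug $\chi_R f_1$ into the equation for $f_2$. Subtracting cancels the symmetric $\chi_R \nabla f_1 \cdot \nabla f_2$ and $|A_\Sigma|^2 \chi_R f_1 f_2$ terms, leaving
\[
(\lambda_1 - \lambda_2) \int_\Sigma \chi_R f_1 f_2 \, \omega = \int_\Sigma (f_2 \nabla f_1 - f_1 \nabla f_2) \cdot \nabla \chi_R.
\]
On the annulus $A_R = \iota^{-1}(B_{2R}^{n+1}(0) \setminus B_R^{n+1}(0))$ the assumed lower bound $\omega \geq (\Lambda |\iota(x)|^2)^{-1}$ gives $\int_{A_R} f_i^2 \leq 4 \Lambda R^2 \int_{A_R} f_i^2 \, \omega$, so by Cauchy--Schwarz the right-hand side is bounded by $C ( \int_{A_R} f_i^2 \omega )^{1/2} \|\nabla f_j\|_{L^2(\Sigma)}$, which vanishes as $R \to \infty$ since $f_i^2 \omega \in L^1(\Sigma)$. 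Dominated convergence on the left (justified by $|f_1 f_2| \omega \in L^1$, Cauchy--Schwarz in $L^2_\omega$) gives $\int f_1 f_2 \omega = 0$, establishing the direct-sum decomposition.

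For the dimension identity, I set $N_\omega = \dim \oplus_{\lambda<0} \mathcal{E}_\omega(\lambda; L_\Sigma, W^{1,2}_\omega(\Sigma), W^{1,2}(\Sigma))$ and $N_S = \text{anl-ind}_{\iota^{-1}(B_S^{n+1}(0))}(\Sigma)$, and show $N_\omega = \lim_S N_S$ via the auxiliary quantity $M = \sup \{ \dim \Pi : \Pi \leq W^{1,2}_\omega(\Sigma),\ (B_{L_\Sigma})_{|\Pi} < 0 \}$. The inequality $\lim_S N_S \leq M$ is immediate: by Proposition \ref{prop: equivalence of weighted and unweighted eigenspaces} applied with $\omega \equiv 1$ on each compact $\Sigma \cap \iota^{-1}(B_S^{n+1}(0))$, any $N_S$-dimensional negative-definite subspace of $W^{1,2}_0(\Sigma \cap \iota^{-1}(B_S^{n+1}(0)))$ extends by zero to a subspace of $W^{1,2}_\omega(\Sigma)$ (since compact support makes $f^2\omega \in L^1$). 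Conversely, given a $k$-dimensional $\Pi \leq W^{1,2}_\omega(\Sigma)$ with basis $\{f_1,\ldots,f_k\}$ on which $B_{L_\Sigma}$ is negative definite, I show $\{\chi_R f_1,\ldots,\chi_R f_k\} \subset W^{1,2}_0(\Sigma \cap \iota^{-1}(B_{2R}^{n+1}(0)))$ remains negative definite for large $R$: the convergence $B_{L_\Sigma}[\chi_R f_i, \chi_R f_j] \to B_{L_\Sigma}[f_i, f_j]$ follows from dominated convergence of $|A_\Sigma|^2 \chi_R^2 f_i f_j$ (dominated by $C \omega |f_i f_j| \in L^1$) together with the annular estimate controlling $\int f_i^2 |\nabla \chi_R|^2$ and the cross terms exactly as in step one; linear independence for $R$ large follows since $\chi_R f_i \to f_i$ in $L^2_\omega$. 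This gives $M \leq \lim_S N_S$, hence $M = \lim_S N_S$. The remaining step $N_\omega = M$ is the variational characterization, which proceeds as in Proposition \ref{prop: equivalence of weighted and unweighted eigenspaces}: the inequality $N_\omega \leq M$ uses negative eigenfunctions to build a negative-definite subspace, while $M \leq N_\omega$ uses a minimization argument in the $\omega$-orthogonal complement of $\oplus_{\lambda<0} \mathcal{E}_\omega$. The main obstacle here is adapting the compact minimization argument: the minimizing sequence is $W^{1,2}_\omega$-bounded using $|A_\Sigma|^2 \leq C \omega$, but extracting a limit requires care with tail behavior, which can be handled by splitting into compact and tail regions, using Rellich on the former and the annular weight estimate plus $|A_\Sigma|^2 \leq C\omega$ on the latter to show the tail contribution to both $\int f^2 \omega$ and $B_{L_\Sigma}[f,f]$ is controlled by $\int_{\iota^{-1}(B_R^{n+1}(0)^c)} f^2 \omega$, which can be made arbitrarily small.
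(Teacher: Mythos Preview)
Your proof of the direct-sum decomposition is essentially the paper's argument: both use the cutoff $\chi_R$ to upgrade the admissible test-function class from $W^{1,2}(\Sigma)$ to $W^{1,2}_\omega(\Sigma)$ (the paper does this once and for all, showing $B_{L}[f,h_S]\to B_L[f,h]$ and $\langle f,h_S\rangle_\omega\to\langle f,h\rangle_\omega$ for $f,h\in W^{1,2}_\omega(\Sigma)$, then quotes the compact orthogonality argument; you compute the difference directly). Either way the annular estimate $\int_{A_R} f^2 \leq C R^2 \int_{A_R} f^2\omega$ is the key input, and your execution is correct.

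For the index equality your route differs from the paper's, and the step $M\leq N_\omega$ has a real gap as written. You propose to run the min-max of Proposition~\ref{prop: equivalence of weighted and unweighted eigenspaces} on the noncompact $\Sigma$, and you acknowledge the difficulty is extracting a limit from the minimising sequence. But the sentence ``the tail contribution \ldots is controlled by $\int_{\iota^{-1}(B_R^{n+1}(0)^c)} f^2\omega$, which can be made arbitrarily small'' is only true for a \emph{fixed} $f\in W^{1,2}_\omega(\Sigma)$; for the minimising \emph{sequence} $\{\tilde f_k\}$ you have no a-priori tightness in $L^2_\omega$, so mass may escape and the limit could have $\langle \tilde f,\tilde f\rangle_\omega<1$ without any further information. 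The three-case analysis of Proposition~\ref{prop: equivalence of weighted and unweighted eigenspaces} can in fact be made to work here, but it requires (i) first extending the eigenvalue equation to test functions in $W^{1,2}_\omega(\Sigma)$ (exactly the cutoff step above, which you did not invoke for this purpose), so that $B_L[\tilde f_k,f_i]=\lambda_i\langle\tilde f_k,f_i\rangle_\omega=0$ makes sense, and (ii) running the $\alpha\in\{0,1\}\cup(0,1)$ trichotomy with weak $W^{1,2}_\omega$-convergence and Fatou rather than with any tightness claim. Your sketch does neither and instead suggests a direct tail-smallness that is not available.

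The paper sidesteps this entirely. For $\text{anl-ind}(\Sigma)\leq N_\omega$ it takes, on each $\Sigma\cap\iota^{-1}(B_{S_k}^{n+1}(0))$, a $\langle\cdot,\cdot\rangle_\omega$-orthonormal family $\{f_1^k,\dots,f_I^k\}$ of weighted Dirichlet eigenfunctions with negative eigenvalues $\lambda_i^k$ (whose existence follows from the compact theory), observes that $\lambda_i^k$ is monotone in $k$ and bounded below, and extracts $W^{1,2}_{\mathrm{loc}}$-limits $f_i^\infty\in \mathcal{E}_\omega(\lambda_i^\infty;L_\Sigma,W^{1,2}_\omega(\Sigma),W^{1,2}(\Sigma))$ with $\lambda_i^\infty<0$. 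Linear independence of $\{f_i^\infty\}$ is then proved by contradiction: if some unit combination $h_k=\sum a_i f_i^k$ converged to zero in $W^{1,2}_{\mathrm{loc}}$, then $(1-\chi_{S_0})h_k$ would be supported in the \emph{stable} region $\Sigma\setminus\iota^{-1}(\overline{B_{S_0}^{n+1}(0)})$ (where the index is already exhausted) yet satisfy $B_L[(1-\chi_{S_0})h_k,(1-\chi_{S_0})h_k]<0$ for large $k$, a contradiction. This constructive approach never minimises on the noncompact manifold, and the role of your unproved tightness is played by the single qualitative fact that $\Sigma$ is stable outside a fixed compact set.
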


\begin{rem}
    In the literature, for a two-sided, properly immersed minimal hypersurface $\Sigma$, the analytic index ($\text{anl-ind} \, (\Sigma)$) and analytic nullity ($\text{anl-nul} \, (\Sigma)$), are just referred to as the index and nullity of $\Sigma$. 
    We choose to maintain the terms of analytic index and analytic nullity to keep the notation and definitions consistent throughout the paper.
\end{rem}

\begin{proof}
    Denote the stability operator on $\Sigma$ by $L = L_{\Sigma}$.
    As the immersion is proper, $\essinf \omega > 0$, on compact sets of $\Sigma$, and thus $W_{\omega}^{1,2} (\Sigma) \subset W^{1,2}_{\text{loc}} (\Sigma)$.
    Moreover, as $\Sigma$ has finite total curvature, this implies that,
    \begin{equation*}
        \lim_{S \rightarrow \infty} \int_{\Sigma \cap \iota^{-1} (B_{S}^{n + 1} (0))} |A_{\Sigma}|^{n} \rightarrow 0.
    \end{equation*}
    Thus, by applying the curvature estimate of Proposition \ref{prop: curvature estimate}, we have that there exists an $S_{0} > 0$, such that for $x \in \Sigma \setminus \iota^{-1} (B_{2 S_{0}}^{n + 1} (0))$,
\begin{equation}\label{eqn: immersion omega curvature estimate on ends}
    |A_{\Sigma}|^{2} (x) \leq \frac{1}{(|\iota(x)| - S_{0})^{2}} \leq \frac{4}{|\iota(x)|^{2}}.
\end{equation}
Moreover, choosing $S_{0} \geq R$, we have that for $x \in \Sigma \setminus \iota^{-1} (B_{2 S_{0}}^{n + 1} (0))$, 
\begin{equation*}
    |A_{\Sigma}|^{2} (x) \leq 4 \Lambda \omega.
\end{equation*}
Thus, for $f, \, h \in W_{\omega}^{1,2} (\Sigma)$, 
    \begin{equation*}
        \left| \int_{\Sigma} f \, h \, \omega \right| + \left| \int_{\Sigma} |A_{\Sigma}|^{2} \, f \, h \right| < (1 + 4 \Lambda) \left( \int_{\Sigma} f^{2} \, \omega \right)^{1/2} \left( \int_{\Sigma} h^{2} \, \omega \right)^{1/2} < + \infty.
    \end{equation*}
    Therefore the quantities, $B_{L} [f, h]$, and $\langle f, h \rangle_{\omega}$, are finite and well defined for $f, \, h \in W^{1,2}_{\omega} (\Sigma)$.
    We also note that for $\lambda \in \mathbb{R}$, such that
    \begin{equation*}
        \text{dim} \, (\mathcal{E}_{\omega} (\lambda; L_{\Sigma}, W_{\omega}^{1,2} (\Sigma), W^{1,2} (\Sigma)) ) \not= 0,
    \end{equation*}
    then, similar to (\ref{eqn: lower bound on eigenvalue}) we may deduce $\lambda \geq - C$, with $C = C (\Sigma, \iota, R) < + \infty$.
    
    \bigskip 

    Recall the following function,
    \begin{equation*}
    \begin{cases}
        \chi \in C^{\infty} (\mathbb{R}; [0,1]), \\
        \chi(t) = 1, & t \in (-\infty,1], \\
        \chi(t) = 0, & t \in [2, \infty), \\
        -3 \leq \chi'(t) \leq 0.
    \end{cases}
\end{equation*}
    For large $S > 0$, we then define the following smooth function, 
    \begin{equation*}
    \chi_{S} (x) = \chi \left( \frac{|\iota (x)|}{S} \right).
    \end{equation*}
    For $f \in W^{1,2}_{\omega} (\Sigma)$, we define $f_{S} = f \chi_{S} \in W_{0}^{1,2} (\Sigma \cap \iota^{-1} (B_{2 S}^{n + 1} (0)))$.
    Again using the estimate (\ref{eqn: immersion omega curvature estimate on ends}), we may deduce that for $f, \, h \in W_{\omega}^{1,2} (\Sigma)$, 
    \begin{eqnarray*}
        \lim_{S \rightarrow \infty} B_{L} [f, h_{S}] &=& B_{L} [f, h], \\
        \lim_{S \rightarrow \infty} \langle f, h_{S} \rangle_{\omega} &=& \langle f, h \rangle_{\omega}.
    \end{eqnarray*}
    Thus if $f \in (\mathcal{E}_{\omega} (\lambda; L, W_{\omega}^{1,2} (\Sigma), W^{1,2} (\Sigma)) )$, then in fact, for all $h \in W^{1,2}_{\omega} (\Sigma)$, $B_{L} [f, h] = \lambda \langle f, h \rangle_{\omega}$.
    This allows us to conclude (\ref{eqn: span is equal to direct sum in unbounded manifold}) in an identical way to (\ref{eqn: span of weighted eigenvectors is a direct sum}).

    \bigskip 

    We now proceed similarly to Proposition \ref{prop: equivalence of weighted and unweighted eigenspaces}. 
    First recall that as $\Sigma$ has finite total curvature and Euclidean volume growth at infinity, its (analytic) index is finite \cite[Section 3]{Tysk-finite-curvature-index-minimal-surfaces} (cf. \cite{Li-Yau-SchrodingerEquationandEigenvalueProblem}).
    Thus we may pick an $S_{0} > 0$, such that
    \begin{equation}\label{eqn: Sigma is stable outside B S 0}
    \text{anl-ind}_{\iota^{-1} (B_{S_{0}}^{n + 1} (0))} \, (\Sigma) = \text{anl-ind} \, (\Sigma).
    \end{equation}
    First we show that, 
    \begin{equation}\label{eqn: b leq I non-compact}
        \text{dim} \, (\oplus_{\lambda < 0} \, \mathcal{E}_{\omega} (\lambda; L_{\Sigma}, W_{\omega}^{1,2} (\Sigma), W^{1,2} (\Sigma)) ) \leq \text{anl-ind} \, (\Sigma) = I.
    \end{equation}
    If the left hand side is equal to $0$, then the inequality is trivial, thus assume we have $b \in \mathbb{Z}_{\geq 1}$, such that, 
    \begin{equation*}
        b \leq \text{dim} \, (\oplus_{\lambda < 0} \, \mathcal{E}_{\omega} (\lambda; L_{\Sigma}, W_{\omega}^{1,2} (\Sigma), W^{1,2} (\Sigma)) ).
    \end{equation*}
    Therefore, as in Proposition \ref{prop: equivalence of weighted and unweighted eigenspaces}, we may pick a set of eigenfunctions,
    \begin{equation*}
        \{ f_{1}, \ldots, f_{b}\} \subset \oplus_{\lambda < 0} \, \mathcal{E}_{\omega} (\lambda; L, W_{\omega}^{1,2} (\Sigma), W^{1,2} (\Sigma)),
    \end{equation*}
    which are orthonormal with respect to $\langle \, \cdot, \cdot \, \rangle_{\omega}$.
    The inequality then follows noting that for large enough $S > S_{0}$, 
    \begin{equation*}
        \text{span} \{ (f_{1})_{S}, \ldots, (f_{b})_{S} \} \subset W_{0}^{1,2} (\Sigma \cap B_{2 S}^{n + 1} (0)),
    \end{equation*}
    is a $b$-dimensional subspace on which $B_{L}$ is negative definite.

    \bigskip 

    For the reverse of (\ref{eqn: b leq I non-compact}), we take $I \geq 1$ (note that if $I = 0$ then (\ref{eqn: b leq I non-compact}) implies equality), and an increasing sequence $S_{k} \rightarrow \infty$. 
    For large enough $S_{k}$, as $\Sigma \cap \iota^{-1} (B_{S_{k}}^{n + 1} (0))$ is compact with smooth boundary, by identical arguments to those contained in Proposition \ref{prop: equivalence of weighted and unweighted eigenspaces}, there exists a sequences $\lambda_{1}^{k} \leq \cdots \leq \lambda_{I}^{k} < 0$ and a set, 
    \begin{equation*}
        \{ f_{1}^{k}, \ldots, f_{I}^{k} \} \subset W_{0}^{1,2} (\Sigma \cap \iota^{-1} (B_{S_{k}}^{n + 1} (0)) ),
    \end{equation*}
    which is orthonormal with respect to $\langle \, \cdot, \cdot \, \rangle_{\omega}$, such that for all $\varphi \in W^{1,2}_{0} (\Sigma \cap \iota^{-1} (B_{S_{k}}^{n + 1} (0)))$, 
    \begin{equation*}
        B_{L} [f_{i}^{k}, \varphi] = \lambda_{i}^{k} \langle f_{i}^{k}, \varphi \rangle_{\omega}.
    \end{equation*}
    By standard theory (see \cite[Lemma 3.7]{Fritz-index-paper}), for each $i = 1, \ldots, I$, $\lambda_{i}^{k} \geq \lambda_{i}^{k + 1}$. 
    Then recalling our uniform bound $\lambda_{i}^{k} \geq - C$, for each $i = 1, \ldots, I$, there exists a $\lambda_{i}^{\infty} \in (0, - C]$, such that $\lambda_{i}^{k} \rightarrow \lambda_{i}^{\infty}$.
    By similar arguments contained in Section \ref{sec: convergence on the bubble}, we may deduce that after potentially taking a subsequence and renumerating, for each $i = 1, \ldots, I$, there exists an $f_{i}^{\infty} \in W_{\omega}^{1,2} (\Sigma)$, such that, 
    \begin{equation*}
        \begin{cases}
            f_{i}^{k} \rightharpoonup f_{i}^{\infty}, & W^{2,2}_{\text{loc}} (\Sigma), \\
            f_{i}^{k} \rightarrow f_{i}^{\infty}, & W^{1,2}_{\text{loc}} (\Sigma),
        \end{cases}
    \end{equation*}
    and $f_{i}^{\infty} \in \mathcal{E}_{\omega} (\lambda_{i}^{\infty}; L_{\Sigma}, W_{\omega}^{1,2} (\Sigma), W^{1,2} (\Sigma))$.  
    We now look to show that
    \begin{equation*}
        \text{dim} \,( \text{span} \{ f_{1}^{\infty}, \ldots, f_{I}^{\infty} \}) = I,
    \end{equation*}
    which will complete the proof. 

    \bigskip 

    For a contradiction, assume not. 
    Then there exists $a_{1}, \ldots, a_{I} \in \mathbb{R}$, with $\sum a_{i}^{2} = 1$, such that, 
    \begin{equation*}
        h \coloneqq \sum_{i = 1}^{I} a_{i} \, f_{i}^{\infty} = 0.
    \end{equation*}
    We then define, 
    \begin{equation*}
        h_{k} \coloneqq \sum_{i = 1}^{I} a_{i} \, f_{i}^{k} \in W_{0}^{1,2} (\Sigma \cap \iota^{-1} (B_{S_{k}}^{n + 1} (0)) ),
    \end{equation*}
    and note that $\langle h_{k}, h_{k} \rangle_{\omega} = 1$ for all $k$, and $h_{k} \rightarrow 0$ in $W^{1,2}_{\text{loc}} (\Sigma)$.
    By our choice of $S_{0}$ (\ref{eqn: Sigma is stable outside B S 0}), we have that $\Sigma \setminus \iota^{-1} (\overline{B_{S_{0}}^{n + 1} (0)})$ is stable, however, $\chi_{S_{0}} h_{k} \in W^{1,2}_{0} (\Sigma \setminus \iota^{-1} (\overline{B_{S_{0}}^{n + 1} (0)}))$, and for large enough $k$, 
    \begin{equation*}
        B_{L} [\chi_{S_{0}} h_{k}, \chi_{S_{0}} h_{k}] < 0,
    \end{equation*}
    which clearly contradicts the stability of $\Sigma \setminus \iota^{-1} (\overline{B_{S_{0}}^{n + 1} (0)})$.
\end{proof}

\section{Proof of the Theorem}

Again, for ease of notation we only write the proof of Theorem \ref{thm: main theorem intro} for the case of minimal hypersurfaces, however the identical argument works for the case of $H$-CMC hypersurfaces. 

\bigskip 

We follow similar arguments to those in \cite[Lemma \rom{4}.6]{DGR-morse-index-stability} and \cite[Theorem 1.2]{HL-morse-index-estimates}. 

\bigskip 

If $\limsup_{k \rightarrow \infty} (\text{ind} (M_{k}) + \text{nul} (M_{k})) = 0$, then the conclusion of Theorem \ref{thm: main theorem intro} is trivial.
Suppose for $b \in \mathbb{Z}_{\geq 1}$,
\begin{equation*}
    b \leq \limsup_{k \rightarrow \infty} (\text{ind} (M_{k}) + \text{nul} (M_{k})) = \limsup_{k \rightarrow \infty} \left( \text{dim} \, (\oplus_{\lambda \leq 0} \, \mathcal{E}_{\omega_{k, \delta, R}} (\lambda; L_{k}, W^{1,2} (co (M_{k}))^{-})) \right),
\end{equation*}
where the equality comes from equivalence of considering the weighted and unweighted eigenvalue problems along our sequence (Proposition \ref{prop: equivalence of weighted and unweighted eigenspaces} and Remark \ref{rem: equivalence of weighted and unweighted index and nullity along M k s}). 
After potentially taking a subsequence and renumerating we have that for each $k$, there exists a linear subspace,
\begin{equation*}
    W_{k} \coloneqq \text{span} \{ f_{i,k} \}_{i = 1}^{b} \subset W^{1,2} (co (M_{k}))^{-},
\end{equation*}
where, for each $i = 1, \ldots, b$, there is a $\lambda_{i,k} \leq 0$, such that, 
\begin{equation*}
    f_{i, k} \in \mathcal{E}_{\omega_{k, \delta, R}} (\lambda_{i, k}; L_{k}, W^{1,2} (co(M_{k}))^{-}),
\end{equation*}
and the set $\{f_{i,k}\}_{i = 1}^{b}$, is orthonormal with respect to the $\omega_{k, \delta, R}$-weighted $L^{2}$ inner product, i.e. for $i, \, j = 1, \ldots, b$,
\begin{equation*}
    \int_{M_{k}} f_{i, k} \, f_{j, k} \, \omega_{k, \delta, R} = \delta_{ij}.
\end{equation*}
We may assume this by the argument used to prove (\ref{eqn: span of weighted eigenvectors is a direct sum}).
Thus, as outlined in Section \ref{sec: Strict Stability of the neck}, for each $i = 1, \ldots, b$, after potentially taking a subsequence and renumerating, 
\begin{equation*}
    f_{i, k} \rightarrow ((f_{i, \infty}^{1}, \ldots, f_{i, \infty}^{m}), f_{i, \infty}^{\Sigma^{1}}, \ldots, f_{i, \infty}^{\Sigma^{J}}) \in E_{\infty}
\end{equation*}
where we are defining, 
\begin{eqnarray*}
    E_{\infty} &\coloneqq& \left( \times_{j = 1}^{m} \, \left( \oplus_{\lambda \leq 0} \mathcal{E}_{\omega_{\delta}} (\lambda; L_{\infty}, W^{1,2} (co (M_{\infty}))) \right) \right) \\
    && \times \left( \times_{j = 1}^{J} \, \left( \oplus_{\lambda \leq 0} \mathcal{E}_{\omega_{\Sigma^{j}, R}} (\lambda; L_{\Sigma^{j}}, \dot{W}_{\omega_{\Sigma^{j}, R}}^{1,2} (\Sigma^{j}), W^{1,2} (\Sigma^{j})) \right) \right).
\end{eqnarray*}

\bigskip 

We define the linear map, 
\begin{eqnarray*}
    \Pi_{k} \colon W_{k} &\rightarrow& E_{\infty}, \\
    f_{i, k} &\mapsto& ((f_{i, \infty}^{1}, \ldots, f_{i, \infty}^{m}), f_{i, \infty}^{\Sigma^{1}}, \ldots, f_{i, \infty}^{\Sigma^{J}}).
\end{eqnarray*}
Thus $W_{\infty} \coloneqq \Pi_{k} (W_{k})$ is a linear subspace of $E_{\infty}$.

\bigskip 

Define the integer 
\begin{equation*}
    co (m) = \liminf_{r \rightarrow 0} \liminf_{k \rightarrow \infty} | \{ \text{connected components of } M_{k} \setminus \cup_{y \in \mathcal{I}} B_{r}^{N} (y) \} | \leq m.
\end{equation*}
If $M_{\infty}$ is two-sided, by the graphical convergence on sets, compactly contained away from the finite collection of points $\mathcal{I}$, we have that $co (m) = m$.
If $M_{\infty}$ is one-sided, taking $r$ small enough and $k$ large enough such that, 
\begin{equation*}
    co (m) = | \{ \text{connected components of } M_{k} \setminus \cup_{y \in \mathcal{I}} B_{r}^{N} (y) \} |,
\end{equation*}
we recall notation from Section \ref{sec: convergence on the base}, and we have
\begin{equation*}
    M_{k}^{r} = \bigcup_{l = 1}^{co (m)} M_{k}^{l, r},
\end{equation*}
where each $M_{k}^{l,r}$ is a connected hypersurface. 
Then, $\cup_{j = 1}^{m} \{ (x, u_{k}^{j, r} (x)) \colon x \in \Omega_{r} \}$ is a double cover of $M_{k}^{r}$ with trivial normal bundle, implying that we identify (as in Section \ref{sec: convergence on the base})
\begin{equation*}
    \cup_{j = 1}^{m} \{ (x, u_{k}^{j, r} (x)) \colon x \in \Omega_{r} \} = \cup_{l = 1}^{co (m)} o (M_{k}^{l ,r}).
\end{equation*}
If there is an $l \in \{1, \ldots, co (m)\}$, and $j \not= J \in \{1, \ldots, m\}$, such that for all large enough $k$,
\begin{equation*}
    o (M_{k}^{l ,r}) =  \{ (x, u_{k}^{j, r} (x)) \colon x \in \Omega_{r} \} \cup \{ (x, u_{k}^{J, r} (x)) \colon x \in \Omega_{r} \},
\end{equation*} 
then (depending on the choice of unit normal in Section \ref{sec: convergence on the base}), for each $i = 1, \ldots, m$, either
\begin{equation*}
    f_{i, \infty}^{j} ((x, \nu)) = - f_{i, \infty}^{J} ((x, - \nu)), \, \text{for all} \, (x, \nu) \in co (M_{\infty}) \setminus \iota^{-1} (\mathcal{I}),
\end{equation*}
or, 
\begin{equation*}
    f_{i, \infty}^{j} ((x, \nu)) = f_{\infty}^{J} ((x, - \nu)), \, \text{for all} \, (x, \nu) \in co (M_{\infty}) \setminus \iota^{-1} (\mathcal{I}).
\end{equation*}
Thus we may define an injective map 
\begin{equation*}
    P \colon W_{\infty} \rightarrow F_{\infty},
\end{equation*}
where, 
\begin{eqnarray*}
    F_{\infty} &=& \left( \times_{l = 1}^{co (m)} \left( \oplus_{\lambda \leq 0} \mathcal{E}_{\omega_{\delta}} (\lambda; L_{\infty}, W^{1,2} (co (M_{\infty}))) \right) \right) \\
    && \times \left( \times_{j = 1}^{L} \left( \oplus_{\lambda \leq 0} \mathcal{E}_{\omega^{\Sigma^{j}, R}} (\lambda; L_{\Sigma^{j}}, W_{\omega^{\Sigma^{j}, R}}^{1,2} (\tilde{\Sigma}^{j}), W^{1,2} (\Sigma^{j})) \right) \right)
\end{eqnarray*}

\begin{claim}\label{claim: Pi k is injective}
    $\Pi_{k}$ is injective
\end{claim}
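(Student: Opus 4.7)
The plan is to derive the injectivity of $\Pi_{k}$ as an immediate consequence of Claim \ref{claim: we cannot converge to zero}. The map $\Pi_{k}$ is defined on the basis $\{f_{i,k}\}_{i=1}^{b}$ of $W_{k}$ and extended by linearity, so for any coefficients $a_{1}, \ldots, a_{b} \in \mathbb{R}$,
\begin{equation*}
    \Pi_{k} \Bigl( \sum_{i=1}^{b} a_{i} f_{i,k} \Bigr) = \Bigl( \bigl( \sum_{i=1}^{b} a_{i} f_{i,\infty}^{1}, \ldots, \sum_{i=1}^{b} a_{i} f_{i,\infty}^{m} \bigr), \sum_{i=1}^{b} a_{i} f_{i,\infty}^{\Sigma^{1}}, \ldots, \sum_{i=1}^{b} a_{i} f_{i,\infty}^{\Sigma^{J}} \Bigr).
\end{equation*}
Suppose, for the sake of contradiction, that the kernel of $\Pi_{k}$ is non-trivial. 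Then there exist scalars $(a_{1}, \ldots, a_{b}) \neq 0$ such that the right-hand side above vanishes as an element of $E_{\infty}$.

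After rescaling we may assume $\sum_{i=1}^{b} a_{i}^{2} = 1$. Recall from the setup that $\{f_{i,k}\}_{i=1}^{b}$ is orthonormal with respect to the $\omega_{k,\delta,R}$-weighted $L^{2}$ inner product, and each $f_{i,k}$ is a weighted eigenfunction with non-positive eigenvalue. Therefore the linear combination $h_{k} := \sum_{i=1}^{b} a_{i} f_{i,k}$ satisfies precisely the hypotheses of Claim \ref{claim: we cannot converge to zero}, and that claim asserts that
\begin{equation*}
    h_{k} \longrightarrow \sum_{i=1}^{b} a_{i} \bigl( (f_{i,\infty}^{1}, \ldots, f_{i,\infty}^{m}), f_{i,\infty}^{\Sigma^{1}}, \ldots, f_{i,\infty}^{\Sigma^{J}} \bigr) \neq \bigl( (0, \ldots, 0), 0, \ldots, 0 \bigr).
\end{equation*}
This directly contradicts the assumption that $(a_{1}, \ldots, a_{b})$ lies in the kernel of $\Pi_{k}$.

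The only step requiring any verification is checking that linearity of the limit operation is preserved, i.e. that $\sum a_{i}$ (limit of $f_{i,k}$) equals the limit of $\sum a_{i} f_{i,k}$. This is immediate from the weak $W^{1,2}_{\text{loc}}$ convergence on $co(M_{\infty}) \setminus \iota_{\infty}^{-1}(\mathcal{I})$ (Section \ref{sec: convergence on the base}) and weak $W^{1,2}_{\text{loc}}$ convergence on each bubble $\Sigma^{j}$ (Section \ref{sec: convergence on the bubble}), both of which are linear operations. There is no genuine obstacle here, since the full weight of the argument has already been absorbed into Claim \ref{claim: we cannot converge to zero}; the present claim is merely a rephrasing of that non-degeneracy statement in the language of the linear map $\Pi_{k}$.
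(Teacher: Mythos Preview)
Your proof is correct and follows essentially the same approach as the paper's own proof: assume a nonzero element of the kernel, normalise so that $\sum a_i^2 = 1$, and derive a contradiction with Claim~\ref{claim: we cannot converge to zero}. Your additional remark verifying that the limit operation commutes with taking linear combinations is a helpful clarification that the paper leaves implicit.
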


\begin{proof}
    We prove by contradiction. 
    Assume we have an $h_{k}$,
    \begin{equation*}
        h_{k} = \sum_{i = 1}^{b} a_{i} f_{i, k},
    \end{equation*}
    with $\sum_{i = 1}^{b} a_{i}^{2} = 1$, and $\Pi_{k} (h_{k}) = 0$.
    This implies that for all $k$,
    \begin{equation*}
        \int_{co (M_{k})} h_{k}^{2} \, \omega_{k, \delta, R} = 1,
    \end{equation*} 
    and,
    \begin{equation*}
        h_{k} \rightarrow ((0, \ldots, 0), 0, \ldots, 0), 
    \end{equation*}
    which contradicts Claim \ref{claim: we cannot converge to zero}.
\end{proof}

Thus, 
\begin{equation*}
    \text{dim} \, W_{\infty} = \text{dim} \, W_{k} = b,
\end{equation*}
which implies that, $b \leq \text{dim} \, F_{\infty}$.
We conclude Theorem \ref{thm: main theorem intro} by combining the results in Section \ref{app: equivalence of weighted and unweighted Espaces} (Proposition \ref{prop: equivalence of weighted and unweighted eigenspaces} and Proposition \ref{prop: equiv of espace for non-compact manifold}), and noting that
\begin{equation*}
    \mathcal{E}_{\omega_{\Sigma^{j}, R}} (0; L_{\Sigma^{j}}, W_{\omega_{\Sigma^{i}, R}}^{1,2} (\Sigma^{j}), W^{1,2} (\Sigma^{j})) = \mathcal{E}_{\omega^{\Sigma^{j}, R}} (0; L_{\Sigma^{j}}, W_{\omega^{\Sigma^{j}, R}}^{1,2} (\Sigma^{j}), W^{1,2} (\Sigma^{j})).
\end{equation*}
for $\omega_{\Sigma^{j}, R}$ as defined in the statement of Theorem \ref{thm: main theorem intro}, and $\omega^{\Sigma^{j}, R}$ as in Section \ref{sec: convergence on the bubble}. 
Lastly, by standard regularity theory for elliptic PDEs, we note that,
\begin{eqnarray*}
    \text{nul}_{\omega_{\Sigma^{j}, R}} (\Sigma^{j}) &\coloneqq& \text{dim} \, \left( \{ \psi \in C^{\infty} (\Sigma) \cap W^{1,2}_{\omega_{\Sigma^{j}, R}} (\Sigma^{j}) \colon L_{\Sigma^{j}} \psi = 0\} \right), \\
    &=& \text{dim} \, \left( \mathcal{E}_{\omega^{\Sigma^{i}, R}} (0; L_{\Sigma^{i}}, W_{\omega_{\Sigma^{i}, R}}^{1,2} (\Sigma^{j}), W^{1,2} (\Sigma^{j})) \right).
\end{eqnarray*}

\section{Finiteness of the Nullity}

\begin{proposition}\label{prop: finiteness of nullity of Sigma}
    Let $\Sigma$ be a complete, connected, $n$-dimensional manifold, $n \geq 3$, and 
    \begin{equation*}
        \iota \colon \Sigma \rightarrow \mathbb{R}^{n + 1}
    \end{equation*} 
    be a proper, two-sided, minimal immersion, of finite total curvature
    \begin{equation*}
        \int_{\Sigma} |A_{\Sigma}|^{n} < + \infty,
    \end{equation*}
    and Euclidean volume growth at infinity
    \begin{equation*}
        \limsup_{R \rightarrow \infty} \frac{\mathcal{H}^{n} (\iota (\Sigma) \cap B_{R}^{n + 1} (0))}{R^{n}} < + \infty.
    \end{equation*}
    Consider a function $\omega \in L^{\infty} (\Sigma)$, such that there exists an $R > 0$, and $\Lambda \geq 1$, such that $\essinf \omega > 0$ in $\iota^{-1} (B_{R}^{n + 1} (0))$, and for $x \in \Sigma \setminus \iota^{-1} (B_{R}^{n + 1} (0))$, 
    \begin{equation*}
        \frac{1}{\Lambda |\iota(x)|^{2}} \leq \omega \leq \frac{\Lambda}{|\iota(x)|^{2}}.
    \end{equation*}
    Then
    \begin{equation*}
        \text{anl-nul}_{\omega} (\Sigma) \coloneqq \text{dim} \, \{ \psi \in W^{1,2}_{\omega} (\Sigma) \colon L_{\Sigma} \psi = 0\} < + \infty.
    \end{equation*}
\end{proposition}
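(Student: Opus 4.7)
The plan is to show that the space $V := \{\psi \in W^{1,2}_\omega(\Sigma) : L_\Sigma \psi = 0\}$ is finite dimensional by proving that the closed unit ball of the norm $\|\psi\|_\omega^2 := \int_\Sigma \omega\,\psi^2$ is precompact in $V$. This reduces to (i) upgrading $\|\cdot\|_\omega$-bounded sets in $V$ to $W^{1,2}(\Sigma)$-bounded sets, (ii) local compactness via elliptic regularity and Rellich--Kondrachov, and (iii) a uniform weighted decay-at-infinity estimate for elements in the unit $\|\cdot\|_\omega$-ball of $V$. The conclusion then follows by Riesz's theorem.

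For (i), I will first observe, exactly as in the proofs of Propositions \ref{prop: equivalence of weighted and unweighted eigenspaces} and \ref{prop: equiv of espace for non-compact manifold}, that the bound $|A_\Sigma|^2 \leq C\omega$ holds globally on $\Sigma$ (on compact pieces because $\essinf\omega > 0$ there, and on the ends via Proposition \ref{prop: curvature estimate} combined with $|\iota(x)|^{-2} \leq \Lambda\omega$). Therefore each $\psi \in V$ solves the weak equation against every $h \in W^{1,2}_\omega(\Sigma)$. Testing against $h = \psi\,\chi_S^2$, where $\chi_S$ is a smooth cutoff that is $1$ on $\iota^{-1}(B_S^{n+1}(0))$ and $0$ on $\Sigma \setminus \iota^{-1}(B_{2S}^{n+1}(0))$ with $|\nabla\chi_S| \leq C/S$, absorbing the cross term via Cauchy--Schwarz, and using $|\nabla\chi_S|^2 \leq C\omega$ on the annular support, gives $\int |\nabla\psi|^2\chi_S^2 \leq C\|\psi\|_\omega^2$ uniformly in $S$. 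Sending $S \to \infty$ yields $V \hookrightarrow W^{1,2}(\Sigma)$ continuously with $\|\psi\|_{W^{1,2}(\Sigma)} \leq C\|\psi\|_\omega$; elliptic bootstrap then gives $V \subset W^{2,p}_{\text{loc}}(\Sigma)$ for every $p$.

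For (iii), by Remark \ref{rem: structure of ends of minimal immersions of finite total curvature}, $\Sigma$ has finitely many ends $E_1,\ldots,E_m$, each (after rotation) a graph over $\mathbb{R}^n$ outside a compact set with gradient tending to zero. The weight $\omega$ restricted to each end lies in $L(n/2,\infty)$ with a uniform norm $W = W(\Lambda,\Sigma)$ (by the same calculation as Claim \ref{claim: uniform L n over 2 infinity bounf on weight}), and applying Proposition \ref{prop: curvature estimate} on balls of radius $\sim |\iota(x)|/4$ together with $\int_\Sigma |A_\Sigma|^n < \infty$ yields $|A_\Sigma|^2(x)\,|\iota(x)|^2 \to 0$; consequently $|A_\Sigma|^2 \leq \varepsilon\omega$ on $\Sigma \setminus \iota^{-1}(B_{R_0}^{n+1}(0))$ for any preassigned $\varepsilon > 0$. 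Choosing $\varepsilon$ as in Lemma \ref{lem: Strict stability of the neck} (with potential $V = |A_\Sigma|^2$), that lemma applied on the end piece $E_i \cap \{|\iota| > R_0\}$ gives, for every $\chi \in C^\infty_c(E_i \cap \{|\iota| > R_0\})$,
\begin{equation*}
\frac{1}{CW}\int \omega(\chi\psi)^2 \leq \int |\nabla(\chi\psi)|^2 - |A_\Sigma|^2(\chi\psi)^2.
\end{equation*}
Testing the Jacobi equation against $\psi\chi^2$ yields $\int (|\nabla\psi|^2 - |A_\Sigma|^2\psi^2)\chi^2 = -2\int \chi\psi\,\nabla\psi\cdot\nabla\chi$; expanding $|\nabla(\chi\psi)|^2$ and substituting, the cross terms cancel exactly to leave
\begin{equation*}
\int \omega(\chi\psi)^2 \leq CW\int \psi^2 |\nabla\chi|^2.
\end{equation*}
Choosing $\chi$ equal to $1$ on $\{|\iota|\geq 2R\}$ and $0$ on $\{|\iota|\leq R\}$ with $|\nabla\chi| \leq C/R$ and using $1/R^2 \leq \Lambda\omega$ on the annulus, the tail $g(R) := \int_{\{|\iota|\geq R\}}\omega\psi^2$ satisfies $g(2R) \leq \theta\, g(R)$ for some $\theta = \theta(n,\Lambda,W) < 1$ independent of $\psi$. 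Iterating gives uniform geometric decay $g(2^k R_0) \leq \theta^k \|\psi\|_\omega^2$ across $\psi$ in the unit $\|\cdot\|_\omega$-ball.

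To conclude, take any $\{\psi_k\} \subset V$ with $\|\psi_k\|_\omega \leq 1$; by (i) it is uniformly bounded in $W^{1,2}(\Sigma)$, and after passing to a subsequence Rellich gives $\psi_k \to \psi_\infty$ in $W^{1,2}_{\text{loc}}(\Sigma)$ with $\psi_\infty \in V$. Because $\iota$ is proper, $\iota^{-1}(B_R^{n+1}(0))$ is compact and $\omega \in L^\infty$ upgrades the local convergence to $L^2_\omega$-convergence there for every fixed $R$; the uniform tail estimate from (iii) together with Fatou's lemma then gives $\psi_k \to \psi_\infty$ in $L^2_\omega(\Sigma)$. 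Hence the unit ball of $(V, \|\cdot\|_\omega)$ is precompact and $\dim V < \infty$. The main obstacle is the clean cancellation underlying (iii): it requires that Lemma \ref{lem: Strict stability of the neck} apply with precisely the potential $|A_\Sigma|^2$, which in turn relies on the decay $|A_\Sigma|^2 = o(\omega)$ at infinity — this is the step where the finite-total-curvature hypothesis is essential.
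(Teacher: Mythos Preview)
Your proof is correct in substance and takes a genuinely different route from the paper. The paper argues by contradiction: assuming an infinite $\omega$-orthonormal set $\{\psi_k\}$ of Jacobi fields, it uses local $W^{2,2}$-compactness to find $k\neq l$ with $\psi_\delta=\psi_k-\psi_l$ small in $W^{1,2}$ on a large ball but $\|\psi_\delta\|_\omega^2=2$; cutting $\psi_\delta$ off to an annular end region then contradicts the strict stability of Lemma~\ref{lem: Strict stability of the neck}. Your argument is direct: you establish a \emph{uniform} geometric tail decay $g(2R)\le\theta\,g(R)$ for every $\psi$ in the unit $\|\cdot\|_\omega$-ball of $V$, and combine this with local Rellich compactness to conclude precompactness of that ball. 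Both proofs hinge on the same two ingredients (elliptic local compactness and Lemma~\ref{lem: Strict stability of the neck} on the ends), but your route yields the bonus of a quantitative decay rate at infinity for all Jacobi fields in $W^{1,2}_\omega$, while the paper's contradiction argument mirrors the structure of Claim~\ref{claim: we cannot converge to zero} and is shorter to write.

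Two small points to clean up. First, the embedding ``$\|\psi\|_{W^{1,2}(\Sigma)}\le C\|\psi\|_\omega$'' is not quite right as stated, since $\psi$ need not lie in $L^2(\Sigma)$; what your cutoff computation actually gives (and all you need) is $\|\nabla\psi\|_{L^2(\Sigma)}\le C\|\psi\|_\omega$ together with $\|\psi\|_{L^2(K)}\le C_K\|\psi\|_\omega$ on compact $K$. Second, in step~(iii) you first restrict to $\chi\in C^\infty_c$ but then choose $\chi$ equal to $1$ on $\{|\iota|\ge 2R\}$, which is not compactly supported; consequently $\chi\psi$ need not lie in $W^{1,2}_0$ of the end (and Lemma~\ref{lem: Strict stability of the neck} also assumes $\essinf\omega>0$, which fails on the unbounded end). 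The fix is standard: multiply by an additional outer cutoff $\chi_T$ vanishing outside $\{|\iota|\le 2T\}$, apply the lemma on the bounded annular graph $E_i\cap\{R_0<|\iota|<2T\}$, and send $T\to\infty$; the extra gradient term contributes $C\int_{T\le|\iota|\le 2T}\omega\psi^2\to 0$ since $\omega\psi^2\in L^1$, and your identity $\int|\nabla(\chi\psi)|^2-|A_\Sigma|^2(\chi\psi)^2=\int\psi^2|\nabla\chi|^2$ survives the limit to give the claimed recursion.
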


\begin{proof}
    We assume the statement does not hold and prove by contradiction. 
    There exists a set, $\{\psi_{1}, \psi_{2}, \ldots \} \subset W^{1,2}_{\omega} (\Sigma) \cap C^{\infty} (\Sigma)$, such that for all $k, l \in \mathbb{Z}_{\geq 1}$,
    \begin{equation*}
        \Delta \psi_{k} + |A_{\Sigma}|^{2} \psi_{k} = 0,
    \end{equation*}
    and,
    \begin{equation*}
        \int_{\Sigma} \psi_{k} \, \psi_{l} \, \omega = \delta_{k l}.
    \end{equation*}

    \bigskip 

    \textbf{Claim.}
        For any $\delta > 0$, there exists $k, \, l \in \mathbb{Z}_{\geq 1}$, $k \not= l$, such that 
        \begin{equation*}
            \|\psi_{k} - \psi_{l}\|^{2}_{W^{1,2} (\Sigma \cap \iota^{-1} (B_{2 S}^{n + 1} (0)))} < \delta.
        \end{equation*}

    \begin{proof}
        (of Claim)
        Fix $S > 0$, for all $k \in \mathbb{Z}_{\geq 1}$,
        \begin{equation*}
            \int_{\Sigma \cap \iota^{-1} (B_{3 S}^{n + 1} (0))} \psi_{k}^{2} \leq C (S) < + \infty.
        \end{equation*}
        By standard interior estimates for linear elliptic PDEs we have, 
        \begin{equation*}
            \|\psi_{k}\|_{W^{2,2} (\Sigma \cap \iota^{-1} (B_{2S}^{n + 1} (0))} \leq C (S) < + \infty.
        \end{equation*}
        Therefore, there exists a subsequence $\{\psi_{k'}\} \subset \{\psi_{k}\}$, and a function $\psi_{\infty} \in W^{2,2} (\Sigma \cap \iota^{-1} (B_{2 S}^{n + 1} (0)))$ such that, 
        \begin{equation*}
            \begin{cases}
                \psi_{k'} \rightharpoonup \psi_{\infty}, & \text{weakly in} \, W^{2,2} (\Sigma \cap \iota^{-1} (B_{2 S}^{n + 1} (0))), \\
                \psi_{k'} \rightarrow \psi_{\infty}, & \text{strongly in} \, W^{1,2} (\Sigma \cap \iota^{-1} (B_{2 S}^{n + 1} (0))).
            \end{cases}
        \end{equation*}
        Thus this subsequence is Cauchy, so for any $\delta > 0$, there exists $l, \, k \in \mathbb{Z}_{\geq 1}$, $l \not= k$, such that 
        \begin{equation*}
            \|\psi_{k} - \psi_{l}\|^{2}_{W^{1,2} (\Sigma \cap \iota^{-1} (B_{2 S}^{n + 1} (0)))} < \delta
        \end{equation*}
    \end{proof}

    For $\delta > 0$ fixed we denote, 
    \begin{equation*}
        \psi_{\delta} = \psi_{l} - \psi_{k} \in W^{1,2}_{\omega} (\Sigma),
    \end{equation*}
    and note that, $\Delta \psi_{\delta} + |A_{\Sigma}|^{2} \psi_{\delta} = 0$, and, 
    \begin{equation*}
        \int_{\Sigma} \psi_{\delta}^{2} \, \omega = 2.
    \end{equation*}
    Also, $B_{\Sigma} [\psi_{\delta}, \psi_{\delta}] = 0$, which may be seen by following the argument at the begining of Proposition \ref{prop: equiv of espace for non-compact manifold}.

    \bigskip 

    Recall from Remark \ref{rem: structure of ends of minimal immersions of finite total curvature}, that $\Sigma$ has finitely many ends (say $m \in \mathbb{Z}_{\geq 1}$), which, for large enough $S \geq R$, may be denoted by, 
    \begin{equation*}
        \sqcup_{i = 1}^{m} E^{i} = \Sigma \setminus \iota^{-1} (B_{S}^{n + 1} (0)).
    \end{equation*} 
    Moreover, each end $E^{i}$ is graphical over some hyperplane minus a compact set $B_{i}$ (with the graphing function having small gradient), and for each $\varepsilon > 0$, we may further choose $S = S (\Sigma, \iota, R, \Lambda, \varepsilon) < + \infty$, such that, 
    \begin{equation*}
        |A_{\Sigma}|^{2} (x) \leq \varepsilon \omega,
    \end{equation*}
    for $x \in \Sigma \setminus \iota^{-1} (\overline{B}_{S}^{n + 1} (0))$.
    We also remark that $\omega \in L (n / 2, \infty) (\Sigma)$ (this can be shown similarly to Claim \ref{claim: uniform L n over 2 infinity bounf on weight}).

    \bigskip 

    Fixing a choice of $S = S (\Sigma, \iota, \Lambda, R, \varepsilon) < + \infty$, for any $\zeta > 0$, we may pick $\delta = \delta (S, \zeta) > 0$, then $T = T (\delta) > 4 S$, such that 
    \begin{equation*}
        \| \psi_{\delta} \|_{W^{1,2} \left( \Sigma \cap \iota^{-1} (B_{2 S}^{n + 1} (0)) \right)} < \zeta, \hspace{0.5cm} \text{and,} \hspace{0.5cm} \| \nabla \psi_{\delta} \|_{L^{2} \left( \Sigma \setminus \iota^{-1} (B_{T}^{n + 1} (0)) \right)} + \left( \int_{\Sigma \setminus \iota^{-1} (B_{T}^{n + 1} (0))} \psi_{\delta}^{2} \, \omega \right)^{1/2} < \zeta.
    \end{equation*}
    Recalling definition of functions $\chi_{S}$ and $\chi_{T}$ from Proposition \ref{prop: equiv of espace for non-compact manifold}, we define 
    \begin{equation*}
        \Psi_{\delta} = \chi_{T} (1 - \chi_{S}) \psi_{\delta} \in W^{1,2}_{0} (\iota^{-1} (B_{2T}^{n + 1} (0) \setminus \overline{B_{S}^{n + 1} (0)})).
    \end{equation*}
    Performing similar computations to those in Claim \ref{claim: we cannot converge to zero} we deduce, 
    \begin{equation*}
        \int_{\Sigma} (\psi_{\delta}^{2} - \Psi_{\delta}^{2}) \omega \leq C \left( \int_{\Sigma \cap \iota^{-1} (B_{2 S}^{n + 1} (0)) } \psi_{\delta}^{2} + \int_{\Sigma \setminus \iota^{-1} (B_{T}^{n + 1} (0))} \psi_{\delta}^{2} \, \omega \right) < C \zeta, 
    \end{equation*}
    and, 
    \begin{eqnarray*}
        |B_{L} [\psi_{\delta}, \psi_{\delta}] - B_{L} [\Psi_{\delta}, \Psi_{\delta}]| &\leq& C \biggl( \| \psi_{\delta} \|_{W^{1,2} \left( \Sigma \cap \iota^{-1} (B_{2 S}^{n + 1} (0) ) \right)}^{2} \\
        && \hspace{1cm} + \| \nabla \psi_{\delta} \|_{L^{2} \left( \Sigma \setminus \iota^{-1} (B_{T}^{n + 1} (0) ) \right)}^{2} + \int_{\Sigma \setminus \iota^{-1} (B_{T}^{n + 1} (0) )} \psi_{\delta}^{2} \, \omega \biggr) \\ 
        &<& C \zeta,
    \end{eqnarray*}
    with $C = C (\Sigma, \iota, \omega, R) < + \infty$.
    Now, choosing small enough $\varepsilon = \varepsilon (\omega) > 0$, large enough $S = S (\Sigma, \iota, \Lambda, R, \varepsilon)$, then small enough $\zeta = \zeta (\Sigma, \iota, \omega, R, \varepsilon) > 0$, $\delta = \delta (S, \zeta) > 0$, and large enough $T = T (\delta) > 4 S$, $\Psi_{\delta}$ will derive a contradiction to Lemma \ref{lem: Strict stability of the neck} (in a similar fashion to Claim \ref{claim: we cannot converge to zero}) on at least one of the ends $E^{1}, \ldots, E^{m}$. 
\end{proof}

\bigskip 

Corollary \ref{cor: finiteness of index of minimal immersions} may then be concluded by noting that,
\begin{equation*}
    \text{anl-nul} \, (\Sigma) \coloneqq \text{dim} \, \{ \psi \in W^{1,2} (\Sigma) \colon L_{\Sigma} \psi = 0 \} \leq \text{dim} \, \{ \psi \in W^{1,2}_{\omega} (\Sigma) \colon L_{\Sigma} \psi = 0 \} < + \infty.
\end{equation*}

\section{Jacobi Fields on the Higher Dimensional Catenoid}\label{sec: Jacobi fields on the catenoid}

In this section we analyse Jacobi fields on the $n$-dimensional catenoid, $n \geq 3$.
First, we briefly recall the definition of the $n$-dimensional catenoid for $n \geq 3$ (as in \cite[Section 2]{TZ-StabilityHigherDimensionalCatenoid}).
For $h_{0} > 0$, consider the following integral, for $n \geq 3$, 
\begin{equation}\label{eqn: def of s(h)}
    s (h) = \int_{h_{0}}^{h} \frac{d \tau}{(a \tau^{2 (n - 1)} - 1)^{1 / 2}},
\end{equation}
with $a = h_{0}^{- 2 (n - 1)}$.
Then the function $s (h)$ is increasing and maps $[h_{0}, + \infty)$ to $[0, s_{\infty})$, with 
\begin{equation*}
    s_{\infty} = \int_{h_{0}}^{\infty} \frac{d \tau}{(a \tau^{2 (n - 1)} - 1)^{1 / 2}} < \infty.
\end{equation*}
Thus, the inverse of $s$, $h \colon [0, s_{\infty}) \rightarrow [h_{0}, + \infty)$ is well defined, with $h (0) = h_{0}$, and $h' (0) = 0$. 
We then smoothly extend $h$ as an even function across $(- s_{\infty}, s_{\infty})$.
Now, letting $S^{n - 1}$ denote the unit sphere in $\mathbb{R}^{n}$, we define the catenoid, $\mathcal{C}$, in $\mathbb{R}^{n + 1}$, $n \geq 3$, by the embedding, 
\begin{equation*}
\begin{split}
    F \colon (-s_{\infty}, s_{\infty}) \times S^{n - 1} &\rightarrow \mathbb{R}^{n + 1}, \\
    (s, w) &\mapsto (h (s) w, s).
\end{split}
\end{equation*}
For a point $y = F (s, w)$, the unit normal to $\mathcal{C}$ at $y$ is given by, 
\begin{equation*}
    \nu (y) = \frac{(w, - h' (s))}{(1 + (h')^{2})^{1 / 2}}.
\end{equation*}
It was shown by Schoen \cite[Theorem 3]{S-UniquenessSymmetryMinimalSurfaces}, up to rotations, tranlations and scalings, the catenoid $\mathcal{C}$, is the unique complete, non-flat minimal hypersurface in $\mathbb{R}^{n + 1}$, with two ends, which is regular at infinity (for a definition of regular at infinity see \cite[pp. 800]{S-UniquenessSymmetryMinimalSurfaces}).
Recall our weight, $\omega_{\mathcal{C}, R}$, which is given by 
\begin{equation*}
    \omega_{\mathcal{C}, R} (s, w) = \begin{cases}
        (h(s_{1})^{2} + s_{1}^{2})^{-1}, & s \in (-s_{1}, s_{1}), \\
        (h(s)^{2} + s^{2})^{-1}, & s \in (-s_{\infty}, s_{1}] \cup [s_{1}, s_{\infty}),
    \end{cases}
\end{equation*}
for $s_{1} \in (0, s_{\infty})$, given by $R^{2} = h (s_{1})^{2} + s_{1}^{2}$.

\bigskip 

We now look at jacobi fields on $\mathcal{C}$, 
\begin{equation*}
    JF_{\mathcal{C}} \coloneqq \{ f \in C^{\infty} (\mathcal{C}) \colon \Delta_{\mathcal{C}} f + |A_{\mathcal{C}}|^{2} f = 0 \}.
\end{equation*}
In particular we will focus on elements of $JF_{\mathcal{C}}$ which are generated through rigid motions in $\mathbb{R}^{n + 1}$ (translations, scalings and rotations of $\mathcal{C}$), and look to see which of them lie in $W^{1,2} (\mathcal{C})$ and $W^{1,2}_{\omega_{\mathcal{C}, R}} (\mathcal{C})$.
We note that it is still an open question whether these Jacobi fields generated through rigid motions account for all the Jacobi fields on $\mathcal{C}$. 

\bigskip 

We will denote the Jacobi fields on $\mathcal{C}$ defined through rigid motions by $RMJF_{\mathcal{C}}$. 
Those arising through translations are generated by the span of $f_{1}, \ldots, f_{n + 1}$, where 
\begin{equation}\label{eqn: translation Jacobi fields of catenoid}
    f_{i} (y) \coloneqq \langle \nu (y), e_{i} \rangle = \begin{cases}
        \langle (w, 0), e_{i} \rangle \, (1 + (h')^{2})^{- 1 / 2}, & i = 1, \ldots, n, \\
        - h' \, (1 + (h')^{2})^{- 1 / 2}, & i = n + 1.
    \end{cases}
\end{equation}
The $1$-dimensional subspace of $RMJF_{\mathcal{C}}$ generated through scaling has a basis element given by, 
\begin{equation}\label{eqn: scaling Jacobi field of Catenoid}
    f_{d} (y) \coloneqq \langle \nu (y), y \rangle = \frac{h - s h'}{(1 + (h')^{2})^{1 / 2}}.
\end{equation}
The last group to consider are those generated through rotations.
Rotations of $\mathbb{R}^{n + 1}$ about the origin are given by the special orthogonal group,
\begin{equation*}
    SO (n + 1) = \{ R \in G L (n + 1) \colon \text{det} \, R = 1, \, R^{-1} = R^{T} \},
\end{equation*}
where $G L (n + 1)$ denotes the general linear group of all $(n + 1) \times (n + 1)$ real matrices.
Then a smooth rotation of $\mathbb{R}^{n + 1}$, is given by a smooth curve, 
\begin{equation*}
    \gamma \colon [0, T] \rightarrow S O (n + 1),
\end{equation*}
with $\gamma (0) = Id$ (the identity).
Then the Jacobi field on $\mathcal{C}$ generated by the $1$-parameter family of catenoids, $\gamma (t) (\mathcal{C})$, is 
\begin{equation*}
    f_{\gamma} (y) = \langle \nu (y), \gamma' (0) y \rangle, 
\end{equation*}
where $\gamma' (0) \in T_{Id} S O (n + 1)$.
One may show that, 
\begin{equation*}
    T_{Id} S O (n + 1) = \{ R \in G L (n + 1) \colon R^{T} = - R \}.
\end{equation*}
Taking $R = (R_{ij})_{ij} \in T_{Id} S O (n + 1)$, we have that, 
\begin{equation*}
    R y = R (h w, s) = \sum_{i = 1}^{n + 1} \left( \sum_{j = 1}^{n} h R_{ij} w_{j} + R_{i, n + 1} s \right) e_{i},
\end{equation*}
and, 
\begin{equation*}
    \langle \nu (y), \gamma' (0) y \rangle = (1 + (h')^{2})^{- 1 / 2} \left( h \sum_{i, j = 1}^{n} R_{ij} w_{i} w_{j} + \sum_{i = 1}^{n} R_{i, n + 1} s w_{i} - h h' \sum_{j = 1}^{n} R_{n + 1, j} w_{j} - R_{n + 1, n + 1} h' s \right).
\end{equation*}
As $R^{T} = - R$, this implies, $R_{ii} = 0$, and 
\begin{equation*}
    \sum_{i, j = 1}^{n} R_{ij} w_{i} w_{j} = 0.
\end{equation*}
Thus, for $y = F (s, w)$ we have
\begin{equation*}
    f_{\gamma} (y) = (1 + (h')^{2})^{- 1 / 2} \left( (s + h h') \sum_{i = 1}^{n} R_{i, n + 1} w_{i} \right)
\end{equation*}
We now look to see which elements of $RMJF_{\mathcal{C}}$ lie in either $W^{1,2} (\mathcal{C})$, or $W^{1,2}_{\omega_{\mathcal{C}, R}} (\mathcal{C})$.
The following tells us that to check if an element of $JF_{\mathcal{C}}$ lies in $W^{1,2} (\mathcal{C})$ (resp. $W^{1,2}_{\omega_{\mathcal{C}, R}} (\mathcal{C})$), we only need to check if it lies in $L^{2} (\mathcal{C})$ (resp. $L^{2}_{\omega_{\mathcal{C}, R}} (\mathcal{C})$).
As previously discussed, there exists a $C < + \infty$, such that $|A_{\mathcal{C}}|^{2} \leq C \omega_{\mathcal{C}, R}$.
Now fix any $S > R > 0$, and recall the function $\chi_{S}$ from Proposition \ref{prop: equiv of espace for non-compact manifold}.
Then we have that, for $f \in JF_{\mathcal{C}} \cap L^{2}_{\omega_{\mathcal{C}, R}} (\mathcal{C})$,
    \begin{eqnarray*}
        \int \chi_{S}^{2} |\nabla f|^{2} &=& \int |A|^{2} \chi_{S}^{2} f^{2} - \int 2 (\chi_{S} \nabla f) \cdot ( f \nabla \chi_{S}), \\
        &\leq& C \int f^{2} \omega_{\mathcal{C}, R} + \frac{1}{2} \int \chi_{S}^{2} |\nabla f|^{2} + 2 \int f^{2} |\nabla \chi_{S}|^{2}.
    \end{eqnarray*}
Thus, 
    \begin{equation*}
        \int_{B_{S}^{n + 1} (0) \cap \mathcal{C}} |\nabla f|^{2} \leq C \int f^{2} \omega_{\mathcal{C}, R}.
    \end{equation*}
As the upper bound is finite and independent of $S$, we have that $|\nabla f| \in L^{2} (\mathcal{C})$.

\bigskip 

Consider the pull back of the Euclidean metric to $(-s_{\infty}, s_{\infty}) \times S^{n - 1}$ by $F$,
\begin{equation*}
    g = (1 + (h')^{2}) \, ds^{2} + h^{2} \, g_{S^{n - 1}},
\end{equation*}
where $g_{S^{n - 1}}$ is the standard round metric on $S^{n - 1}$. 
We have, 
\begin{equation*}
    \sqrt{|g|} = h^{n - 1} \sqrt{1 + (h')^{2}}.
\end{equation*}
We now compute the $L^{2}$-norm of our elements of $RMJF_{\mathcal{C}}$. 

\bigskip 

Starting with the translations (\ref{eqn: translation Jacobi fields of catenoid}), for $i = 1, \ldots, n$,
\begin{eqnarray*}
    \int f_{i}^{2} &=& \int_{- s_{\infty}}^{s_{\infty}} \int_{S^{n - 1}} f_{i}^{2} \, \sqrt{|g|} \, dw \, ds, \\
    &=& \left( \int_{- s_{\infty}}^{s_{\infty}} \frac{h^{n - 1}}{(1 + (h')^{2})^{1/2}} \, ds \right) \left( \int_{S^{n - 1}} w_{i} \, dw \right).
\end{eqnarray*}
Differentiating (\ref{eqn: def of s(h)}), we have that 
\begin{equation}\label{eqn: relation between h and h'}
    h' = (a h^{2(n - 1)} - 1)^{1/2},
\end{equation}
which implies that, 
\begin{equation*}
    \int f_{i}^{2} = \frac{2 s_{\infty}}{a^{1/2}} \int_{S^{n - 1}} w_{i} \, dw < + \infty.
\end{equation*}
Thus we have that $f_{i} \in W^{1,2} (\mathcal{C}) \subset W^{1,2}_{\omega_{\mathcal{C}, R}} (\mathcal{C})$.
For $f_{n + 1}$, we have that 
\begin{equation*}
    |f_{n + 1}| (s) \rightarrow 1,
\end{equation*}
as $|s| \rightarrow s_{\infty}$.
Thus as $\omega_{\mathcal{C}, R} \not\in L^{1} (\mathcal{C})$, this implies that $f_{n + 1} \not\in L^{2}_{\omega_{\mathcal{C}, R}} (\mathcal{C})$, and thus $f_{n + 1} \not\in W^{1,2}_{\omega_{\mathcal{C}, R}} (\mathcal{C}) \subset W^{1,2} (\mathcal{C})$.

\bigskip 

For the Jacobi field $f_{d}$, generated by scaling, we have, 
\begin{eqnarray*}
    |f_{d}| &=& \frac{|h - s h'|}{(1 + (h')^{2})^{1/2}} = \frac{|h (h')^{-1} - s|}{((h')^{-2} + 1)^{1/2}}.
\end{eqnarray*}
Thus as $|s| \rightarrow s_{\infty}$, we have that $|h'| \rightarrow \infty$, $h \rightarrow \infty$, and, recalling (\ref{eqn: relation between h and h'})
\begin{equation}
    \frac{h}{|h'|} = \frac{h}{(a h^{2 (n - 1)} - 1)^{1/2}} \rightarrow 0,
\end{equation}
which implies that $|f_{d}| \rightarrow s_{\infty}$.
Thus, similar to above, $f_{d}$ does not lie in $W^{1,2} (\mathcal{C})$ or $W^{1,2}_{\omega_{\mathcal{C}, R}} (\mathcal{C})$.

\bigskip 

Finally we look at elements of $RMJF_{\mathcal{C}}$ which are generated by rotations. 
Recall from above that all such Jacobi fields are of the form 
\begin{equation*}
    f (s, w) = (1 + (h')^{2})^{- 1 / 2} \left( (s + hh') \sum_{i = 1}^{n} R_{i, n + 1} w_{i} \right).
\end{equation*}
If $R_{i, n + 1} = 0$ for all $i = 1, \ldots, n$, we have that $f = 0$. 
Assuming that $R_{i, n + 1} \not= 0$ for some $i = 1, \ldots, n$, then as, 
\begin{equation*}
    \sum_{i = 1}^{n} R_{i, n + 1} w_{i} 
\end{equation*}
is a smooth function on $S^{n - 1}$, and is non-zero at, 
\begin{equation*}
    w_{f} = \left( \sum_{i = 1}^{n} R_{i, n + 1}^{2} \right)^{-1} (R_{1, n + 1}, \ldots, R_{n, n + 1}) \in S^{n - 1},
\end{equation*}
we may conclude that there exists an $\alpha > 0$, and set $U_{\alpha} \subset S^{n - 1}$ of positive $\mathcal{H}^{n - 1}$ measure, such that 
\begin{equation*}
    \left| \sum_{i = 1}^{n} R_{i, n + 1} w_{i} \right| \geq \alpha.
\end{equation*}
Thus for $w \in U_{\alpha}$, we have that, 
\begin{equation*}
    |f|^{2} \geq \alpha^{2} \frac{(s + hh')^{2}}{1 + (h')^{2}} = \alpha^{2} \frac{(s (h')^{-1} + h)^{2}}{(h')^{-2} + 1},
\end{equation*}
implying that $|f|^{2}$ becomes unbounded as $|s| \rightarrow s_{\infty}$, on a set of unbounded measure. 
Thus, $f$ does not lie in $W^{1,2} (\mathcal{C})$, or $W^{1,2}_{\omega_{\mathcal{C}, R}} (\mathcal{C})$.

\bigskip 

Therefore, we have shown that the only non-trivial Jacobi fields on $\mathcal{C}$, which are generated through rigid motions and lie in $W^{1,2} (\mathcal{C})$ or $W^{1,2}_{\omega_{\mathcal{C}, R}} ( \mathcal{C})$ are those spanned by the translations in the $\{ x_{n + 1} = 0\}$ hyperplane, i.e. Jacobi fields in $\text{span} \, (\{ f_{1}, \ldots, f_{n} \})$. 
This implies that, 
\begin{equation*}
    n \leq \text{nul} \, (\mathcal{C}) \leq \text{nul}_{\omega_{\mathcal{C}, R}} (\mathcal{C}).
\end{equation*}


\begin{thebibliography}{1}

\bibitem{ALT-Prescribed-Reaarangement}{
    A. Alvino, P. L. Lions and G. Trombetti, 
    \emph{On Optimization Problems with Prescribed Rearrangements}, 
    Nonlinear Analysis, Theory, Methods and Applications,
    vol. 13, 
    no. 2, 
    pp. 185 -- 220 
    (1989).
}

\bibitem{ABCS-Free-Boundary-Bubble-Analysis}{
L. Ambrozio, R. Buzano, A. Carlotto and B. Sharp, 
\emph{Bubbling Analysis and Geometric Convergence Results for Free Boundary Minimal Surfaces}, 
Journal de l'\'{E}cole Polytechnique — Math\'{e}matiques,
vol. 6, 
pp. 621 -- 624 
(2019).
}

\bibitem{ACS-minimal-hypersurface-bounded-pth-eigenvalue}{
    L. Ambrozio, A. Carlotto and B. Sharp,
    \emph{Compactness of the Space of Minimal Hypersurfaces with Bounded Volume and p-th Jacobi Eigenvalue}, 
    Journal of Geometric Analysis,
    vol. 26,
    no. 4, 
    pp. 2591 -- 2601
    (2016).
    }

\bibitem{A-CurvatureEstimatesMinimalSurfaces3Manifolds}{
M. Anderson, 
\emph{Curvature Estimates for Minimal Surfaces in 3-Manifolds},
Annales scientifiques de l'École Normale Supérieure, 
Series 4, 
vol. 18, 
no. 1, 
pp. 89 -- 105
(1985).
}

\bibitem{Barbosa-doCarmo-Eschenburg-CMC}{
J. Barbosa, M. do Carmo and J. Eschenburg,
\emph{Stability of hypersurfaces of constant mean curvature in Riemannian manifolds},
Mathematische Zeitschrift, 
vol. 197, 
pp. 123 -- 138,
(1988).}

\bibitem{B-SSYandSSviaDeGiorgi}{
C. Bellettini, 
\emph{Extensions of Schoen--Simon--Yau and Schoen--Simon Theorems via Iteration \`{a} la De Giorgi}
(2023), 
arXiv:2310.01340.
}

\bibitem{BeChWi-CMC}{
C. Bellettini, O. Chodosh and N. Wickramasekera, 
\emph{Curvature Estimates and Sheeting Theorems for Weakly Stable CMC Hypersurfaces},
Advances in Mathematics, 
vol. 352, 
pp. 133 - 157
(2019).
}

\bibitem{BW-existence-pmc}{
C. Bellettini and N. Wickramasekera, 
\emph{The Inhomogeneous Allen--Cahn Equation and the Existence of Prescribed-Mean-Curvature Hypersurfaces}
(2020), 
arXiv:2010.05847
}

\bibitem{BST-compactness-bubbling-CMC}{
T. Bourni, B. Sharp and G. Tinaglia, 
\emph{CMC Hypersurfaces with Bounded Morse Index},
Journal f\"{u}r die reine und angewandte Mathematik (Crelles Journal),
no. 786, 
pp. 175 -- 203
(2022).
}

\bibitem{BS-QQ-estimates-minimal-hypersurfaces-bounded-index-and-area}{
R. Buzano and B. Sharp, 
\emph{Qualitative and Quantitative Estimates for Minimal Hypersurfaces with Bounded Index and Area}, 
Transactions of the American Mathematical Society,
vol. 370, 
no. 6, 
pp. 4373 -- 4399
(2018).
}

\bibitem{CC-LorentzSpaces}{
R. E. Castillo and H. C. Chaparro, 
\emph{Classical and Multidimensional Lorentz Spaces},
De Gruyter
(2021).
}

\bibitem{CKM-MinimalHypersurfacesBoundedIndex}{
O. Chodosh, D. Ketover and D. Maximo, 
\emph{Minimal Hypersurfaces with Bounded Index}, 
Inventiones Mathematicae,
vol. 209, 
no. 3, 
pp. 617 -- 664
(2017).
}

\bibitem{CM-Allen-Cahn-in-3-manifolds}{
O. Chodosh and C. Mantoulidis, 
\emph{Minimal Surfaces and the Allen--Cahn Equation on 3 Manifolds}
Annals of Mathematics,
vol. 191, 
no. 1, 
pp. 213 -- 328
(2020)
}

\bibitem{CS-MinimalEmbeddingsPositiveRicciCurvature}{
H. I. Choi and R. Schoen, 
\emph{The Space of Minimal Embeddings of a Surface into a Three-Dimensional Manifold of Positive Ricci Curvature}, 
Inventiones Mathematicae, 
vol. 81, 
pp. 387 -- 398 
(1985).
}

\bibitem{Colding-Minicozzi-Course-Minimal-Surfaces}{
    T. H. Colding and W. P. Minicozzi II, 
    \emph{A Course in Minimal Surfaces}, 
    Graduate Studies in Mathematics, 
    vol. 121, 
    American Mathematical Society 
    (2011).
}

\bibitem{DGR-morse-index-stability}{
F. Da Lio, M. Gianocca and T. Rivi\`{e}re,
\emph{Morse Index Stability for Critical Points to Conformally Invariant Lagrangians}
(2022), 
arXiv:2212.03124.
}

\bibitem{Gaspar-index}{
P. Gaspar, 
\emph{The Second Inner Variation of Energy and the Morse Index of Limit Interfaces}, 
The Journal of Geometric Analysis, 
vol. 30,
pp. 69 -- 85
(2020).
}

\bibitem{Fritz-index-paper}{
F. Hiesmayr, 
\emph{Spectrum and Index of Two-Sided Allen-Cahn Minimal Hypersurfaces}, 
Communications in Partial Differential Equations
vol. 43,
pp. 1541 -- 1565
(2017)
}

\bibitem{HL-morse-index-estimates}{
J. Hirsch and T. Lamm, 
\emph{Index Estimates for Sequences of Harmonic Maps}, 
(to appear in) Communications in Analysis and Geometry,
arXiv:2212.13808
}

\bibitem{L-SecondInnerVariationOfAllenCahn}{
N. Q. Le, 
\emph{On the Second Inner Variation of the Allen--Cahn Functional and its Applications}, 
Indiana University Mathematics Journal, 
vol. 60, 
no. 6, 
pp. 1843 -- 1856
(2011).
}

\bibitem{L-IndexTopologyMinimalHypersurfacesInRn}{
C. Li, 
\emph{Index and Topology of Minimal Hypersurfaces in $\mathbb{R}^{n}$}, 
Calculus of Variations and Partial Differential Equations,
vol. 56, 
no. 180 
(2017).
}

\bibitem{Li-Yau-SchrodingerEquationandEigenvalueProblem}{
    P. Li and S.-T. Yau,
    \emph{On the Schr\"{o}dinger Equation and the Eigenvalue Problem},
    Communications in Mathematical Physics, 
    vol. 88, 
    no. 3
    (1983).
}

\bibitem{Mantoulidis_2022}{
C. Mantoulidis, 
\emph{Variational Aspects of Phase Transitions with Prescribed Mean Curvature}, 
Calculus of Variations and Partial Differential Equations,
vol, 61,
no. 2, 
ar. 43
(2022).
}

\bibitem{MS-Michael-Simon-Sobolev-inequality}{
J. H. Michael and L. Simon, 
\emph{Sobolev and Mean Value Inequalities on Generalized Submanifolds of $\mathbb{R}^{n}$}, 
Communications on Pure and Applied Mathematics, 
vol. 26, 
pp. 361 -- 379
(1973).
}

\bibitem{M-IndexStabilityBiharmonicMaps}{
A. Michelat, 
\emph{Morse Index Stability of Biharmonic Maps in Critical Dimension}
(2023), 
arxiv:2312.07494.
}

\bibitem{MR-IndexStabilityWillmoreOne}{
A. Michelat and T. Rivi\`{e}re, 
\emph{Morse Index Stability of Willmore Immersions \rom{1}}
(2023), 
arXiv:2306.04608.
}

\bibitem{MR-MinimalTwoSpheresLowIndex}{
J. D. Moore and R. Ream, 
\emph{Minimal Two Spheres of Low Index in Manifolds with Positive Complex Sectional Curvature}, 
Mathematische Zeitschrift, 
vol. 219, 
pp. 1295 -- 1335
(2019).
}

\bibitem{R-CompactnessMinimalSurfacesFiniteTotalCurvature}{
A. Ros, 
\emph{Compactness of Spaces of Properly Embedded Minimal Surfaces with Finite Total Curvature}, 
Indiana University Mathematics Journal, 
vol. 44, 
no. 1,
pp. 139 -- 152
(1995)
}

\bibitem{S-OrientabilityOfHypersurfaces}{
H. Samelson, 
\emph{Orientability of Hypersurfaces in $\mathbb{R}^{n}$},
Proceedings of the American Mathematical Society,
vol. 22, 
pp. 301 -- 302 
(1969).
}

\bibitem{S-UniquenessSymmetryMinimalSurfaces}{
R. Schoen, 
\emph{Uniqueness, Symmetry and Embeddedness of Minimal Surfaces}, 
Journal of Differential Geometry, 
vol. 18, 
pp. 791 -- 809
(1983).
}

\bibitem{SS-RegularityStableMinimalHypersurfaces}{
R. Schoen and L. Simon, 
\emph{Regularity of Stable Minimal Hypersurfaces}, 
Communications on Pure and Applied Mathematics,
vol. 34, 
issue 6, 
pp. 741 -- 797
(1981)
}

\bibitem{SSY-CurvatureEstimatesMinimalHypersurfaces}{
R. Schoen, L. Simon and S.-T. Yau,
\emph{Curvature Estimates for Minimal Hypersurfaces}, 
Acta Mathematica, 
vol. 134, 
pp. 275 -- 288
(1975). 
}

\bibitem{Sharp-minimal-hypersurface-with-bounded-index}{
B. Sharp, 
\emph{Compactness of Minimal Hypersurfaces with Bounded Index}, 
Journal of Differential Geometry, 
vol. 106,
no. 2, 
pp. 317 -- 339
(2017).
}

\bibitem{LSimonGMTNotes}{
L. Simon, 
\emph{Lectures on Geometric Measure Theory}, 
Centre for Mathematical Analysis, 
Australian National University
(1983).
}

\bibitem{S-CompactnessCMC3-Manifold}{
A. Sun, 
\emph{Compactness of Constant Mean Curvature Surfaces in a Three-Manifold with Positive Ricci Curvature}, 
Pacific Journal of Mathematics, 
vol. 305, 
no. 2, 
pp. 735 -- 756
(2020).
}

\bibitem{TZ-StabilityHigherDimensionalCatenoid}{
L.-F. Tam and D. Zhou,
\emph{Stability Properties for the Higher Dimensional Catenoid in $\mathbb{R}^{n + 1}$}, 
Proceedings of the American Mathematical Society, 
vol. 137, 
no. 10, 
pp. 3451 -- 3461
(2009).
}

\bibitem{Tysk-finite-curvature-index-minimal-surfaces}{
J. Tysk, 
\emph{Finiteness of Index and Total Scalar Curvature of Minimal Hypersurfaces}, 
Proceedings of the American Mathematical Society, 
vol. 105,
no. 2, 
pp. 429 - 435
(1989).
}

\bibitem{V-NoteOnGeneralizedHardySobolevInequality}{
N. Visciglia, 
\emph{A Note About the Generalized Hardy--Sobolev Inequality with Potential in $L^{p,d} (\mathbb{R}^{N})$}, 
Calculus of Variations and Partial Differential Equations,
vol. 24, 
pp. 167 -- 184
(2005).
}

\bibitem{White-IntroMinimalSurfaces}{
B. White, 
\emph{Lectures on Minimal Surface Theory}, 
IAS/Park City Mathematics Series, 
vol. 22, 
pp. 387 -- 438
(2016).
}

\bibitem{W-CompactnessMinimalSurfaces}{
B. White, 
\emph{On the Compactness Theorem for Embedded Minimal Surfaces in 3-Manifolds with Locally Bounded Area and Genus}, 
Communications in Analysis and Geometry, 
vol. 26, 
no. 3, 
pp. 659 -- 678
(2018).
}

\bibitem{Y-GeneralisedNeckAnalysis}{
H. Yin, 
\emph{Generalized Neck Analysis of Harmonic Maps from Spheres}, 
Calculus of Variations and Partial Differential Equations, 
vol. 60, 
no. 117
(2021).
}

\bibitem{Y-HigherOrderNeckAnalysis}{
H. Yin, 
\emph{Higher-Order Neck Analysis of Harmonic Maps and its Applications}, 
Annals of Global Analysis and Geometry, 
vol. 62, 
pp. 457 -- 477
(2022).
}

\bibitem{ZZ-min-max-CMC}{
X. Zhou and J. Zhu, 
\emph{Min-Max Theory for Constant Mean Curvature Hypersurface}, 
Inventiones Mathematicae, 
vol. 218, 
pp. 441 -- 490
(2019)
}

\end{thebibliography}
\end{document}